\theoremstyle{plain}
\newtheorem{lem}{Lemma}[section]
\newtheorem{thm}[lem]{Theorem}
\newtheorem{prop}[lem]{Proposition}
\newtheorem{ex}[lem]{Example}
\theoremstyle{definition}
\newtheorem{defn}[lem]{Definition}
\newtheorem{rem}[lem]{Remark}
\newtheorem{set}[lem]{Setting}
\newtheorem{notation}[lem]{Notation}
\DeclareMathOperator{\intw}{\mathcal Intw}
\DeclareMathOperator{\Hom}{Hom}
\DeclareMathOperator{\irr}{irr}
\DeclareMathOperator{\End}{End}
\DeclareMathOperator{\id}{Id}
\numberwithin{equation}{subsection}
\date{\today}
\title[Morita reduced versions of skew group algebras of path
algebras]{On the Morita reduced versions of skew group algebras of path
  algebras}
\author{Patrick Le Meur}
\address{Universit\'e de Paris, Sorbonne Universit\'e, CNRS, Institut
  de Math\'ematiques de Jussieu-Paris Rive Gauche, F-75013 Paris,
  France}
\email{patrick.le-meur@imj-prg.fr}
\subjclass[2010]{Primary 16S35; Secondary 16W22, 20C15, 18D10}
\keywords{Path algebra; skew group algebra; monoidal category; intertwiner}
\begin{document}

\begin{abstract}
  Let $R$ be the skew group algebra of a finite group acting on the
  path algebra of a quiver. This article develops both theoretical and
  practical methods to do computations in the Morita reduced algebra
  associated to $R$. Reiten and Riedtmann proved that there exists an
  idempotent $e$ of $R$ such that the algebra $eRe$ is both Morita
  equivalent to $R$ and isomorphic to the path algebra of some quiver
  which was described by Demonet. This article gives explicit formulas
  for the decomposition of any element of $eRe$ as a linear
  combination of paths in the quiver described by Demonet. This is
  done by expressing appropriate compositions and pairings in a
  suitable monoidal category which takes into account the
  representation theory of the finite group.
\end{abstract}

\maketitle

\section*{Introduction}
\label{sec:introduction}

In \cite{RR85}, Reiten and Riedtmann initiated the investigation of
the skew group algebras of Artin algebras from the viewpoint of
homological dimensions and representation theory. Their article was
followed up by many research works illustrating the following
principle: Artin algebras share many properties, whether of
homological or of representation theoretic nature, with their
associated skew group algebras. For instance, see \cite{MR1913868} for
interactions with Koszul duality; \cite{MR2467401,MR3395330} for
interactions with piecewise hereditary algebras; and
\cite{MR978602,MR2578593} for interactions with preprojective algebras
and McKay correspondence.

Let ${\mathbbm{k}}$ be an algebraically closed field, $Q$ be a finite quiver
that may have oriented cycles, and $G$ be a finite group with order
not divisible by $\mathrm{char}({\mathbbm{k}})$ and acting on ${\mathbbm{k}} Q$ by algebra automorphisms
in such a way that both the set of (primitive idempotents of) vertices
and the vector subspace generated by the arrows are stabilised by this
action. Following \cite{RR85}, the skew group algebra ${\mathbbm{k}} Q*G$ is
hereditary.

While Reiten and Riedtmann described a quiver whose path algebra is
Morita equivalent to ${\mathbbm{k}} Q*G$ when $G$ is cyclic, the description of
such a quiver in general is recent. In \cite{MR2578593}, Demonet
described an idempotent $\tilde e$ of ${\mathbbm{k}} Q * G$ and a quiver $Q_G$
such that $\tilde e \cdot ({\mathbbm{k}} Q*G)\cdot \tilde e$ is Morita equivalent
to ${\mathbbm{k}} Q*G$ and isomorphic to ${\mathbbm{k}} Q_G$. This, however, does not yield
a completely described isomorphism, which might be source of
trouble. Here is a situation taken from Calabi-Yau algebras and where
such trouble may occur. Following \cite{lemeur2}, if $W$ is a
$G$-invariant potential on $Q$, then $G$ acts on the Ginzburg dg
algebra $\mathcal A(Q,W)$, defined in \cite{G06}, and
$\mathcal A(Q,W)*G$ is Morita equivalent to $\mathcal A(Q_G,W_G)$,
where $W_G$ is the potential on $Q_G$ which, as a linear combination
of oriented cycles, corresponds to $\tilde e \cdot W \cdot \tilde e$
under any chosen isomorphism
${\mathbbm{k}} Q_G \to \tilde e \cdot( {\mathbbm{k}} Q*G) \cdot \tilde e$. The problem here
is that describing that linear combination explicitly is not easy. Up
to now, the only fairly general explicit descriptions of $W_G$ are due
to Amiot and Plamondon \cite{AP} when $G$ is of order $2$ and to
Giovannini and Pasquali \cite{2018arXiv180504041G} when $G$ is
cyclic, the stabiliser of each vertex is either trivial or the whole
group, and every cycle appearing in $W$ goes through fixed vertices
only (or, through vertices with trivial stabilisers only,
respectively) as soon as it goes through two of them; during the
revision of the present article, Giovannini, Pasquali and Plamondon
presented \cite{GPP} an extension of the latter description to the
case where $G$ is finite and abelian.

This raises the question of decomposing explicitly the elements of
$\tilde e \cdot ({\mathbbm{k}} Q*G)\cdot \tilde e$ as linear combinations of
paths in $Q_G$ under an isomorphism
${\mathbbm{k}} Q_G \to \tilde e \cdot( {\mathbbm{k}} Q*G) \cdot \tilde e$.

Demonet's description of $Q_G$ involves the representation theory of
the stabilisers of the vertices of $Q$. It is hence expectable that
answering the above-mentioned question should involve representation
theory. Answers of this kind already exist in particular cases where
$Q$ has only one vertex. Here is an example due to Ginzburg
\cite{G06}. He proved that, if $G$ is a finite subgroup of
$\mathrm{SL}_3(\mathbb C)$, then $\mathbb C[x,y,z]*G$ is Calabi-Yau in
dimension $3$. This was done by taking $Q$ to be the quiver with one
vertex and three loops $x$, $y$, and $z$, by taking $W=xyz-xzy$, and
by expressing $W_G$ in terms of the monoidal category of finite
dimensional representations of $G$. See \cite{MR2593679} for a
generalisation to $\mathbb C[x_1,\ldots,x_n]*G$ where $G$ is a finite
subgroup of $\mathrm{SL}_n(\mathbb C)$, in which case $Q$ is the
quiver with one vertex and $n$ loops. Note that this generalisation is
(detailed and) successfully applied by Giovannini in \cite{giovannini}
to find subgroups $\Gamma$ of $\mathrm{SL}_{n+1}(\mathbb C)$ such that
$\mathbb C[x_1,\ldots,x_{n+1}]*\Gamma$ is Morita equivalent to the
$(n+1)$-preprojective algebra of an $n$-representation finite algebra.

This article hence gives explicit formulas for the decomposition of
the elements of $\tilde e \cdot ({\mathbbm{k}} Q*G)\cdot \tilde e$ as linear
combinations of paths in $Q_G$ under the isomorphism
${\mathbbm{k}} Q_G \to \tilde e \cdot( {\mathbbm{k}} Q*G) \cdot \tilde e$. These formulas
are obtained by translating explicitly the result of natural
operations and pairings in the monoidal category
$(\mathrm{mod}(A^e),\otimes_A)$ of finite dimensional $A$-bimodules, where $A$
is the direct product of the group algebras of the stabilisers of the
vertices of $Q$.

On one hand, these formulas provide an interpretation of the
coefficients of these linear combinations in terms of the
representation theory of the stabilisers of the vertices of $Q$. On
the other hand, they are explicit enough to be implemented by a
computer or a human being able to manipulate the irreducible
representations of the involved groups. In specific cases, for
instance when the action of the group elements transforms arrows to
scalar multiples of arrows and the stabilisers are cyclic, these
formulas simplify in a combinatorial way. Finally, these formulas give
a complete solution to the problem of computing a potential $W_G$ on
$Q_G$ such that the Ginzburg dg algebra $\mathcal A(Q_G,W_G)$ is
Morita equivalent to $\mathcal A(Q,W)*G$, for any given $G$-invariant
potential $W$ on $Q$.

\section{Definitions and main results}
\label{sec:defin-notat-main}

Throughout the article, ${\mathbbm{k}}$ denotes an algebraically closed
field. For all finite dimensional algebras $\Lambda$, their categories
of finite dimensional left modules are denoted by $\mathrm{mod}(\Lambda)$.

This section presents the main results of this text as well as the
needed definitions and notation. These definitions are illustrated
with a running example throughout the section. A full application of
the main results is presented on an example in
section~\ref{sec:examples}. The remaining sections are devoted to the
proofs of these results.

\begin{set}
  \label{sec:defin-notat-main-2}
  Let $Q_0$, $S$, $G$, and $M$ be as follows.
  \begin{itemize}
  \item $Q_0$ is a finite set and $S$ is the semi-simple ${\mathbbm{k}}$-algebra
    ${\mathbbm{k}}{Q_0}$.
  \item $G$ is a finite group with order not divisible by $\mathrm{char}({\mathbbm{k}})$ and
    acting on $Q_0$.
  \item $M$ is a finite dimensional $S$-bimodule, considered as a
    collection of vector spaces $(\,_iM_j)_{(i,j)\in Q_0^2}$, endowed
    with an action of $G$ written exponentially and such that
    $^g( \,_iM_j) = \,_{g\cdot i}M_{g\cdot j}$ for all $i,j\in Q_0$
    and $g\in G$.
  \end{itemize}
\end{set}

The choice of a basis of $_iM_j$, for all $i,j\in Q_0$ determines a
quiver $Q$ such that $T_S(M) \simeq {\mathbbm{k}} Q$ as ${\mathbbm{k}}$-algebras. All finite
quivers arise in this way.

The actions of $G$ on $S$ and $M$ induce an action on $T_S(M)$ by
algebra automorphisms such that
$^g(m_1\otimes\cdots \otimes m_n) = \,^gm_1 \otimes \cdots \otimes
\,^gm_n$
for all $g\in G$ and $m_1,\ldots,m_n \in M$. Recall that the skew
group algebra $T_S(M)*G$ is the ${\mathbbm{k}}$-algebra with underlying vector
space $T_S(M) \otimes G$, where the tensor $u \otimes v$ is denoted by $u*v$
for all $u\in T_S(M)$ and $v\in {\mathbbm{k}} G$, and with product such that
$(u * g) \cdot (u'*g') = (u\cdot\,^gu')*gg'$ for all $u,u'\in T_S(M)$
and $g,g'\in G$. It is customary to consider $T_S(M)$ and ${\mathbbm{k}} G$ as
subalgebras of $T_S(M)*G$ in the canonical way.

\begin{notation}
  \label{sec:defin-notat-main-3}
  The following data is used throughout the article.
  \begin{itemize}
  \item  $\left[G\backslash Q_0\right]$ denotes a complete set of
    representatives of the orbits in $Q_0$.
  \item For all $i\in Q_0$,
    \begin{itemize}
    \item $G_i$ denotes the stabiliser of $i$ and $e_i$ denotes the
      corresponding primitive idempotent of $S$,
    \item $\left[G/G_i\right]$ denotes a complete set of
      representatives of cosets modulo $G_i$,
    \item $\irr(G_i)$ denotes a complete set of representatives of
      the isomorphism classes of the irreducible representations of
      $G_i$,
    \item and ${\varepsilon}_U$ denotes a primitive idempotent of ${\mathbbm{k}} G_i$ such
      that $U \simeq {\mathbbm{k}} G_i\cdot {\varepsilon}_U$ in $\mathrm{mod}({\mathbbm{k}} G_i)$, for all
      $U\in \irr(G_i)$.
    \end{itemize}
  \item $\tilde e$ denotes the idempotent $\sum_{(i,U)} e_i
    *{\varepsilon}_U$ of $T_S(M)*G$, where $i$ runs through $[G\backslash Q_0]$
    and $U$ runs through $\irr(G_i)$.
  \end{itemize}
\end{notation}

\begin{ex}
  \label{sec:defin-main-results-8}
  Let $Q$ be the quiver
  \[
    \xymatrix@=5ex{
      & \bullet \ar@(lu,ru) \ar@<2pt>[rd] \ar@<2pt>[ld] \\
      \overline{0} \ar@(lu,l) \ar@<2pt>[ru] \ar@<2pt>[rr] &&
      \overline{1} \ar@(ru,r) \ar@<2pt>[lu] \ar@<2pt>[ll]
    }
  \]
  where $\overline{0}$ and $\overline{1}$ denote the classes in
  $\mathbb Z/2\mathbb Z$.
  For all vertices $i,j$ of $Q$, the arrow from $i$ to $j$ is denoted
  by $x_{i,j}$. Let $G$ be the group $\langle \tau\,|\,\tau^2\rangle$
  of order $2$. The actions of $G$ on $Q$ and on $M$ are assumed to be
  the ones such that
  \[
    \tau \cdot \bullet = \bullet,\ \ \tau\cdot \overline{0} = \overline{1}\,,
  \]
  and, for all vertices $i,j$ of $Q$,
  \[
    ^\tau x_{i,j} = -x_{\tau\cdot i,\tau \cdot j}\,.
  \]
  \begin{itemize}
  \item Let $[G\backslash Q_0]$ be $\{\bullet, \overline{0}\}$. Note
    that $G_{\bullet} = \{\mathrm{Id},\tau\}$ and
    $G_{\overline{0}} = \{\mathrm{Id}\}$. 
  \item Let $[G/G_\bullet]$ and
    $[G/G_{\overline{0}}]$ be $\{\mathrm{Id}\}$ and
    $\{\mathrm{Id},\tau\}$, respectively. 
  \item For all $k\in \mathbb Z/2\mathbb Z$, denote by ${\varepsilon}_k$ the
    idempotent $\frac{1}{2}(\mathrm{Id}+(-1)^k\tau)$; denote by
    $\rho_k$ the irreducible representation ${\mathbbm{k}} \cdot {\varepsilon}_k$ of $G$.
  \item Let $\mathrm{irr}(G_\bullet)$ and
    $\mathrm{irr}(G_{\overline{0}})$ be
    $\{\rho_k\ |\ k\in \mathbb Z/2\mathbb Z\}$ and
    $\{{\mathbbm{k}} \cdot \mathrm{Id}\}$, respectively.
    \end{itemize}
    In this situation,
    $\tilde e = e_\bullet * {\varepsilon}_{\overline{0}} + e_\bullet*{\varepsilon}_{\overline{1}} +
    e_{\overline{0}}*\mathrm{Id}$.
\end{ex}

\subsection{Demonet's quiver of the Morita reduction}
\label{sec:demon-quiv-morita}

Demonet proved in \cite{MR2578593} that the skew group algebra
$T_S(M)*G$ is Morita equivalent to the path algebra of a quiver as
follows.

\begin{set}
  \label{sec:defin-notat-main-10}
  Let $Q_G$ be a quiver with the following properties.
  \begin{itemize}
  \item The vertices are the couples $(i,U)$, where
    $i\in [G\backslash Q_0]$ and $U\in \irr(G_i)$.
  \item For all vertices $(i,U)$ and $(j,V)$, the arrows
    $(i,U) \to (j,V)$ form a basis over ${\mathbbm{k}}$ of
    $\Hom_{{\mathbbm{k}} G_{i}}(U, M(i,j;V))$, here
    $M(i,j;V)=\oplus_{y\in [G/G_j]}\,_iM_{y\cdot j}\otimes_{\mathbbm{k}} yV$, where
    $yV$ stands for $y \otimes_{\mathbbm{k}} V \subset {\mathbbm{k}} G\otimes_{\mathbbm{k}} V$, see
    Definition~\ref{sec:defin-notat-main-1} for details.
  \end{itemize}
\end{set}

\begin{ex}
  \label{sec:defin-main-results-9}
  This is the continuation of
  Example~\ref{sec:defin-main-results-8}. In order to describe a
  quiver $Q_G$ such as in Setting~\ref{sec:defin-notat-main-10},
  Table~\ref{tab:2} escribes the $G_i$-module $M(i,j;U)$ for all possible
  $i,j,U$.
  \begin{table}[!ht]
    \centering
    \begin{tabularx}{0.9\textwidth}{>{\hsize=0.2\hsize\raggedright}X>{\hsize=0.3\hsize\centering}X>{\hsize=0.5\hsize\centering\arraybackslash}X}
      \hline 
      $M(i,j;U)$ & Basis over ${\mathbbm{k}}$ & Action of $G_i$\\
      \hline
      $M(\bullet,\bullet;\rho_k)$
      &
        $x_{\bullet,\bullet}\otimes {\varepsilon}_k$
      &
        $^\tau(x_{\bullet,\bullet}\otimes {\varepsilon}_k) = (-1)^{k+1}
        x_{\bullet,\bullet} \otimes {\varepsilon}_k$
      \\
      $M(\bullet,\overline{0};{\mathbbm{k}}\cdot\mathrm{Id})$
             & $(x_{\bullet,k}\otimes \tau^k)_{k\in \mathbb Z/2\mathbb Z}$
                               &
                                 $^\tau(x_{\bullet,k}\otimes \tau^k) =
                                 -x_{\bullet,k+\overline{1}}\otimes
                                 \tau^{k+\overline{1}}$
      \\
      $M(\overline{0},\bullet;\rho_k)$
             & $x_{\overline{0},\bullet} \otimes{\varepsilon}_k$ 
                                   & Trivial
                                                   \\
      $M(\overline{0},\overline{0};{\mathbbm{k}}\cdot \mathrm{Id})$
                 & $(x_{\overline{0},k} \otimes \tau^k)_{k\in \mathbb Z/2\mathbb Z}$
                                   & Trivial
      \\
      \hline
    \end{tabularx}
    \caption{The $G_i$-modules $M(i,j;U)$ ($k\in \mathbb Z/2\mathbb
      Z$).}
    \label{tab:2}
  \end{table}

  For all $k\in \mathbb Z/2\mathbb Z$, denote by $a_k$ and $b_k$ the
  following respective morphisms of $G_{\bullet}$-modules
  \[
    \begin{array}{rclcrcl}
      \rho_{k+\overline{1}}
      & \longrightarrow
      &
        M(\bullet,\bullet;\rho_k)
      &&
         \rho_k
      & \longrightarrow
      &
        M(\bullet,\overline{0};{\mathbbm{k}} \cdot \mathrm{Id}) \\
      {\varepsilon}_{k + \overline{1}}
      &
        \longmapsto
      &
        x_{\bullet,\bullet}\otimes {\varepsilon}_k
      &&
         {\varepsilon}_k
      &
        \longmapsto
      &
        \frac{1}{2}(x_{\bullet,\overline{0}}\otimes
        \mathrm{Id} + (-1)^{k+\overline{1}}
        x_{\bullet,\overline{1}} \otimes \tau)
    \end{array}
  \]
  and denote by $c_k$ and $d_k$ the following respective morphisms of
  $G_{\overline{0}}$-modules
  \[
    \begin{array}{rclcrcl}
      {\mathbbm{k}} \cdot \mathrm{Id}
      & \longrightarrow
      &
        M(\overline{0},\bullet;\rho_k)
      &&
         {\mathbbm{k}} \cdot \mathrm{Id}
      & \longrightarrow
      &
        M(\overline{0},\overline{0};{\mathbbm{k}} \cdot
        \mathrm{Id}) \\
      \mathrm{Id}
      &
        \longmapsto
      &
        x_{\overline{0},\bullet}\otimes {\varepsilon}_k
      &&
      \mathrm{Id}
      &
        \longmapsto
      &
        (-1)^k x_{\overline{0},k}\otimes \tau^k\,.
    \end{array}
  \]
  Then, $Q_G$ may be taken equal to
  \[
    \xymatrix@R=2ex{
      (\bullet, \rho_{\overline{0}}) \ar@/_/[dd]_{a_{\overline{1}}}
      \ar@<2pt>[rrd]^{b_{\overline{0}}} & & \\
      & & (0,{\mathbbm{k}}\cdot \mathrm{Id}) \ar@<2pt>[llu]^{c_{\overline{0}}}
      \ar@<-2pt>[lld]_{c_{\overline 1}} \ar@(ul,ur)^{d_{\overline{0}}} \ar@(dr,dl)^{d_{\overline{1}}}\,,\\
      (\bullet, \rho_{\overline{1}}) \ar@/_/[uu]_{a_{\overline{0}}}
      \ar@<-2pt>[rru]_{b_{\overline{1}}} &&
      }
    \]
\end{ex}

Demonet's result may be reformulated as follows, see
Section~\ref{sec:proof-main-results} for more details.
\begin{thm}[{\cite{MR2578593}}]
  \label{sec:proof-main-results-1}
  The ${\mathbbm{k}}$-algebras ${\mathbbm{k}} Q_G$ and $T_S(M)*G$ are Morita
  equivalent. Besides, assuming that $U={\mathbbm{k}} G_i\cdot {\varepsilon}_U$ for all
  vertices $(i,U)$ of $Q_G$, then there exists an
  isomorphism of algebras
  \begin{equation}
    \label{eq:27}
    {\mathbbm{k}} Q_G \xrightarrow{\sim} \tilde e \cdot (T_S(M)*G) \cdot \tilde e
  \end{equation}
  which maps every arrow $f\colon (i,U) \to (j,V)$ of $Q_G$ to $f({\varepsilon}_U)$.
\end{thm}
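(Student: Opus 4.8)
The plan is to recognize $R := T_S(M)*G$ as a tensor algebra over its degree-zero subalgebra $S*G$ and to cut it down by the idempotent $\tilde e$, which lies entirely in degree zero. First I would check the graded identification $T_S(M)*G \cong T_{S*G}(M*G)$, where $M*G$ is viewed as an $(S*G)$-bimodule; concretely $(M^{\otimes_S n})*G \cong (M*G)^{\otimes_{S*G} n}$ as $(S*G)$-bimodules, compatibly with the product. Next I would record the standard fact that if $e$ is a full idempotent of a ring $B$ (that is, $BeB=B$), then the multiplication $Be\otimes_{eBe}eB\to B$ is an isomorphism of $B$-bimodules, and deduce $eN^{\otimes_B n}e \cong (eNe)^{\otimes_{eBe}n}$ for every $B$-bimodule $N$; hence $eT_B(N)e \cong T_{eBe}(eNe)$ as graded algebras.

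Applied with $B=S*G$, $N=M*G$, and $e=\tilde e$, this reduces the theorem to three computations. For the fullness of $\tilde e$ in $S*G$: since $\car \nmid |G|$, Maschke's theorem makes each $\k G_i$ semisimple, and the decomposition $S*G\cong \prod_{i\in[G\backslash Q_0]}\mathrm{Mat}_{|G\cdot i|}(\k G_i)$ together with the Artin--Wedderburn decomposition of each $\k G_i$ shows that the idempotents $e_i*\e_U$ meet every simple block; thus $(S*G)\tilde e(S*G)=S*G$, whence $\tilde e$ is full in $R$ as well and $R$ is Morita equivalent to $\tilde e R\tilde e$. For the degree-zero part: for orbit representatives $i,j$, the computation $e_i(S*G)e_i\cong \k G_i$ via $e_i*g\leftrightarrow g$ ($g\in G_i$) gives $(e_i*\e_U)(S*G)(e_j*\e_V)\cong \e_U\,\k G_i\,\e_V\cong \Hom_{\k G_i}(V,U)$, which is $\k$ exactly when $(i,U)=(j,V)$ and $0$ otherwise; so $\tilde e(S*G)\tilde e\cong \prod_{(i,U)}\k$, matching the vertices of $Q_G$.

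The degree-one part is the heart. Writing ${}_iP_j:=e_i(M*G)e_j$, a direct expansion gives ${}_iP_j=\bigoplus_{y\in[G/G_j]}\bigoplus_{\gamma\in G_j}{}_iM_{y\cdot j}*y\gamma$, which is free as a right $\k G_j$-module, and I would identify the left $\k G_i$-module ${}_iP_j\otimes_{\k G_j}V$ with Demonet's module $M(i,j;V)=\bigoplus_{y\in[G/G_j]}{}_iM_{y\cdot j}\otimes_\k yV$. Granting this, the elementary adjunction $\e_U N=\Hom_{\k G_i}(\k G_i\e_U,N)=\Hom_{\k G_i}(U,N)$, $\phi\mapsto\phi(\e_U)$, applied to $N={}_iP_j\e_V\cong{}_iP_j\otimes_{\k G_j}V$ yields a natural isomorphism $(e_i*\e_U)(M*G)(e_j*\e_V)=\e_U\,{}_iP_j\,\e_V\cong\Hom_{\k G_i}(U,M(i,j;V))$ whose inverse sends a homomorphism $f$ to $f(\e_U)$. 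Since arrows $(i,U)\to(j,V)$ are by definition a basis of the right-hand side, the degree-one bimodule $\tilde e(M*G)\tilde e$ is exactly the arrow bimodule of $Q_G$; combined with the graded identification above, the tensor algebra $\tilde e R\tilde e\cong T_{\prod\k}(\tilde e(M*G)\tilde e)$ is the path algebra $\k Q_G$, and the isomorphism carries each arrow $f$ to $f(\e_U)$, as claimed.

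The step I expect to be the main obstacle is the identification ${}_iP_j\otimes_{\k G_j}V\cong M(i,j;V)$ as $\k G_i$-modules: as plain vector spaces both sides are $\bigoplus_y{}_iM_{y\cdot j}\otimes_\k V$, but matching the left $G_i$-action requires tracking how $G_i$ permutes the cosets in $[G/G_j]$ while simultaneously acting on the spaces ${}_iM_{y\cdot j}$, which is precisely the role of the twist encoded in the symbol $yV$ of Definition~\ref{sec:defin-notat-main-1}. Verifying that the isomorphisms assembled above are compatible with composition — so that the grading genuinely upgrades to an algebra isomorphism, not merely a bimodule isomorphism in each degree — is the remaining bookkeeping, handled uniformly by the full-idempotent identity $Be\otimes_{eBe}eB\cong B$.
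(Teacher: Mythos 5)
Your argument is correct and, at its core, performs the same computation as the paper: the chain $(e_i*\e_U)\cdot(MG)\cdot(e_j*\e_V)=\e_U\cdot M(i,j)\cdot\e_V\cong\Hom_{\k G_i}(U,M(i,j)\otimes_{G_j}V)\cong\Hom_{\k G_i}(U,M(i,j;V))$, with inverse $f\mapsto f(\e_U)$, is exactly the displayed isomorphism in the paper's proof, and the identification ${}_iP_j\otimes_{\k G_j}V\cong M(i,j;V)$ that you single out as the main obstacle is precisely Lemma~\ref{sec:space-munderlinei} (the map $\Theta$ of (\ref{eq:subsp-mund-i-1})). Where you genuinely diverge is in how the Morita equivalence and the tensor-algebra structure of $\tilde e\cdot(T_S(M)*G)\cdot\tilde e$ are obtained: the paper simply cites Demonet for the statement that $T_S(M)*G$ is Morita equivalent to $\tilde e\cdot(T_S(M)*G)\cdot\tilde e$ and that the latter is the path algebra of a quiver with arrow spaces $(e_i*\e_U)\cdot(MG)\cdot(e_j*\e_V)$, whereas you rederive this from scratch via the graded identification $T_S(M)*G\cong T_{S*G}(MG)$, the fullness of $\tilde e$ in the semisimple algebra $S*G$ (Maschke together with the block decomposition $S*G\cong\prod_{i}\mathrm{Mat}_{|G\cdot i|}(\k G_i)$), and the full-idempotent identity $B\tilde e\otimes_{\tilde eB\tilde e}\tilde eB\cong B$ yielding $\tilde e\,T_B(N)\,\tilde e\cong T_{\tilde eB\tilde e}(\tilde eN\tilde e)$. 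Your route buys self-containedness and makes transparent why the degree-wise bimodule isomorphisms assemble into an algebra isomorphism; the paper buys brevity by outsourcing exactly this part to the reference. All the individual steps you list check out, so the only work remaining is the bookkeeping you already identified.
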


\subsection{Monoidal categories as a toolbox for intertwiners}
\label{sec:tools-from-monoidal}

The present article introduces a ${\mathbbm{k}}$-algebra, called the
algebra of intertwiners relative to $M$ and denoted by $\intw$, and an
isomorphism of ${\mathbbm{k}}$-algebras
${\mathbbm{k}} Q_G\to \intw^{\mathrm{op}}$ together with explicit
formulas decomposing any given element of $\intw$ in the basis
consisting of the images of the paths in $Q_G$ under
${\mathbbm{k}} Q_G\to \intw^{\mathrm{op}}$. See
Theorem~\ref{sec:defin-main-results} for details. Composing this
isomorphism with the inverse of (\ref{eq:27}) results in a
${\mathbbm{k}}$-algebra isomorphism
$\tilde e \cdot (T_S(M)*G)\cdot \tilde e\to \intw^{\mathrm{op}}$. A
description of it is given in Theorem~\ref{sec:defin-main-results-4}
hence providing a decomposition, under (\ref{eq:27}), of any element
of $\tilde e \cdot (T_S(M)*G)\cdot \tilde e$ in the basis of
${\mathbbm{k}} Q_G$ consisting of the paths in $Q_G$. These
decompositions involve non-degenerate pairings between intertwiners
relative to $M$ and intertwiners relative to the dual vector space
$M^*$. Both the product in $\intw$ and the pairings are explicit
reformulations of natural operations and pairings in a suitable
monoidal category.

\begin{rem}[{\cite{MR3242743}}]
  \label{sec:defin-main-results-2}
  Let $({\mathcal C},\otimes)$ be a monoidal ${\mathbbm{k}}$-linear category with finite
  dimensional morphism spaces.
  \begin{enumerate}
  \item Given objects $X$, $Y$, $U$, $V$, and $W$ of ${\mathcal C}$, and
    following the ideas presented in \cite[Subsection 4.4]{G06} for the monoidal
    categories of finite subgroups of $\mathrm{SL}_3(\mathbb C)$,
    there is a natural operation between morphisms $U \to X\otimes V$
    and $V\to Y \otimes W$,
    \[
    \begin{array}{ccc}
      {\mathcal C}(U, X\otimes V) \times {\mathcal C}(V, Y \otimes W) & \longrightarrow
      & {\mathcal C}(U, X\otimes Y \otimes W) \\
      (f,f') & \longmapsto & (X\otimes f') \circ f\,.
    \end{array}
    \]
  \item Given an object $X$ of ${\mathcal C}$, recall that a \emph{left dual}
    of $X$ is an object $X'$ of ${\mathcal C}$ together with an adjunction
    $(X \otimes- )\vdash (X'\otimes-)$ of endofunctors of ${\mathcal C}$
    (\cite[Definition~2.10.1, Proposition~2.10.8]{MR3242743}). If
    $X$
    has a left dual, then there exists a non-degenerate pairing, for
    all objects $U$
    and $V$ of ${\mathcal C}$ such that ${\mathcal C}(U,U) = {\mathbbm{k}} \cdot \id_U$,
    \[
    \begin{array}{cccc}
      \langle - | - \rangle \colon & {\mathcal C}(U, X \otimes V) \otimes
                                     {\mathcal C}(V, X'\otimes U)
      &\longrightarrow & {\mathbbm{k}} \\
      & f \otimes \phi & \longmapsto & \langle f |\phi \rangle
    \end{array}
    \]
    such that
    $\langle f |\phi \rangle\cdot \id_U = \overline \phi \circ f$,
    where $\overline \phi \in {\mathcal C}(X \otimes V, U)$ is adjoint to
    $\phi$; because ${\mathcal C}(U,U)={\mathbbm{k}}\cdot \id_U$, a scalar is associated
    to every endomorphism of $U$, and hence to
    $\overline \phi \circ f$, which makes it possible to define a
    pairing with values in ${\mathbbm{k}}$. When ${\mathcal C}$ is a multitensor category
    this pairing is just a combination of the adjunction
    $(X \otimes- )\vdash (X'\otimes-)$ and the non-degenerate pairing
    ${\mathcal C}(U,X\otimes V) \otimes {\mathcal C}(X\otimes V,U)\to {\mathbbm{k}}$ which takes
    into account $U$ being simple in ${\mathcal C}$ (see
    \cite[Example~7.9.14]{MR3242743}).
  \end{enumerate}
\end{rem}

The monoidal category ${\mathcal C}$  considered here is the
category $\mathrm{mod}(A^e)$
of finite dimensional $A$-bimodules with tensor product being
$\otimes_A$, where $A$ is as follows,
\begin{equation}
  \label{eq:25}
  A = {\mathbbm{k}} \times \Pi_{i\in Q_0}{\mathbbm{k}} G_i\,.
\end{equation}
Note that $A$ is a semi-simple ${\mathbbm{k}}$-algebra; accordingly, any object
$X$ of ${\mathcal C}$ has a left dual given by the dual vector space $X^*$. The
isolated factor ${\mathbbm{k}}$ in the definition of $A$ is included so that
${\mathcal C}$ contains $\mathrm{mod}({\mathbbm{k}} G_i)$ as a full additive subcategory, yet not
as a monoidal subcategory, for all $i\in Q_0$. The reason for this
inclusion is the following, at the same time it clarifies the role of
$\mathcal C$. The text manipulates morphism spaces of the shape
\begin{equation}
  \label{eq:47}
  \Hom_{{\mathbbm{k}} G_{i_0}}(U,\,e_{i_0}(M \otimes_{\mathbbm{k}} {\mathbbm{k}} G)e_{i_1} \otimes_{{\mathbbm{k}}
  G_{i_1}} \cdots \otimes_{{\mathbbm{k}} G_{i_{n-1}}}e_{i_{n-1}}(M \otimes_{\mathbbm{k}} {\mathbbm{k}}
G)e_{i_n} \otimes_{{\mathbbm{k}} G_{i_n}} V)
\end{equation}
where $i_0,\ldots,i_n$ are vertices of $Q$ whereas
$U\in \mathrm{mod}({\mathbbm{k}} G_{i_0})$ and
$V\in \mathrm{mod}({\mathbbm{k}} G_{i_n})$; note that each one of
$e_{i_{t-1}}(M \otimes_{\mathbbm{k}} {\mathbbm{k}} G)e_{i_t}$ (for
$1\leqslant t\leqslant n$) is an object of $\mathcal C$, and so are
$U$ and $V$ because of the isolated factor ${\mathbbm{k}}$ in $A$; the
above-mentioned morphism space is hence equal to
\begin{equation}
  \label{eq:37}
  \mathcal C(U,\,e_{i_0}(M \otimes_{\mathbbm{k}} {\mathbbm{k}} G)e_{i_1} \otimes \cdots
  \otimes e_{i_{n-1}}(M \otimes_{\mathbbm{k}} {\mathbbm{k}} G)e_{i_n} \otimes
  V)
\end{equation}
and it is manipulated in this text using the tools recalled in
Remark~\ref{sec:defin-main-results-2}.%  Note that ${\mathcal C}$ is not the only
% monoidal category such that (\ref{eq:47}) can be described as
% (\ref{eq:37}); For instance, should there exist a couple $(j_0,j_1)$
% of vertices of $Q$ such that $G_{j_0} = G_{j_1}$, then removing the
% factor ${\mathbbm{k}} G_{j_1}$ form $A$ would yield a monoidal category such that
% (\ref{eq:47}) can still be described as (\ref{eq:37}); In view of
% implementation, the latter monoidal category might be considered as
% more efficient would be somehow simpler
% than ${\mathcal C}$. However, the reason for introducing ${\mathcal C}$ is to prove the
% needed properties of the product in $\intw$ and of the pairing;
% besides (\ref{eq:25}) has the advantage of being easy to describe.

\subsection{The interwiners}
\label{sec:interwiners}

Here are the base ingredients needed to define $\intw$.
\begin{defn}
  \label{sec:defin-notat-main-1}
  Let $n$ be a non-negative integer. Let
  $\underline i = i_0,\ldots,i_n$ be a sequence in $Q_0$. Let
  $V \in \mathrm{mod}({\mathbbm{k}} G_{i_n})$.
  \begin{enumerate}
  \item For all sequences $\underline y = y_1,\ldots,y_n$, where $y_t\in
    [G/G_{i_t}]$ for all $t\in \{1,\ldots,n\}$, let $M_{\underline
      y}(\underline i; V)$ be the following vector space,
    \[
    M_{\underline
      y}(\underline i; V) =
    \,_{i_0}M_{y_1\cdot i_1}\otimes_{\mathbbm{k}}\,_{y_1\cdot
    i_1}M_{y_1y_2\cdot i_2} \otimes_{\mathbbm{k}} \cdots \otimes_{\mathbbm{k}} \, _{y_1\cdots y_{n-1}\cdot
    i_{n-1}} M _{y_1\cdots y_n\cdot i_n} \otimes_{\mathbbm{k}} y_1\cdots y_n V\,,
  \]
  where $y_1\cdots y_n V$ stands for the vector subspace
  $y_1\cdots y_n \otimes_{\mathbbm{k}} V$ of ${\mathbbm{k}} G \otimes_{\mathbbm{k}} V$.
\item Define the ${\mathbbm{k}} G_{i_0}$-module $M(\underline i;V)$ as follows.
  Its underlying vector space is
      \begin{equation}
        \label{eq:42}
       \bigoplus_{\underline y} M_{\underline y}(\underline i;V)\,, 
      \end{equation}
      where $\underline y$ runs through all possible sequences such as
      in (1).  For all $g\in G_{i_0}$, the action of $g$ is written
      exponentially and defined as follows. For all $\underline y$,
      there exist a unique sequence $\underline y'=y_1',\ldots,y_n'$,
      where $y_t'\in [G/G_{i_t}]$ for all $t\in \{1,\ldots,n\}$, and a
      unique $h\in G_{i_n}$, such that
  \begin{itemize}
  \item $gy_1\cdots y_tG_{i_t} = y_1'\cdots y_t' G_{i_t}$ for all
    $t\in \{1,\ldots,n\}$,
  \item and $gy_1\cdots y_n = y_1'\cdots y_n'h$;
  \end{itemize}
  then, the action of $g$ transforms $M_{\underline y}(\underline
  i;V)$ into $M_{\underline y'}(\underline i;V)$ and, for all
  $m_1\otimes \cdots \otimes m_n \otimes y_1\cdots y_n v\in
  M_{\underline y}(\underline{i};V)$,
  \[
^g\left(m_1\otimes \cdots \otimes m_n\otimes y_1\cdots y_n v\right)
=
\,^gm_1\otimes \cdots \otimes \,^gm_n \otimes y_1' \cdots y_n'\,^hv\,.
  \]
\end{enumerate}
\end{defn}

This does define a finite dimensional ${\mathbbm{k}} G_{i_0}$-module. If $n=0$,
then $M(\underline i;V) = V$. In the monoidal category
$(\mathrm{mod}(A^e),\otimes_A)$ considered previously, $M(\underline i;V)$ is
isomorphic to $M(\underline i)\otimes_A V$ where $M(\underline i)$ is
a ${\mathbbm{k}} G_{i_0}-{\mathbbm{k}} G_{i_n}$-bimodule defined by $M$ and $\underline i$,
see (\ref{eq:24}) for the definition of $M(\underline i)$ and
(\ref{eq:subsp-mund-i-1}) for the isomorphism.

Note that, in the previous definition, if $U = {\mathbbm{k}} G_{i_0} \cdot {\varepsilon}_U$
and $V = {\mathbbm{k}} G_{i_n} \cdot {\varepsilon}_V$ for primitive idempotents
${\varepsilon}_U\in {\mathbbm{k}} G_{i_0}$ and ${\varepsilon}_V\in {\mathbbm{k}} G_{i_n}$, then $M(\underline
i;U)$ is a ${\mathbbm{k}} G_{i_0}$-submodule of $T_S(M)*G$.

\begin{ex}
  \label{sec:defin-main-results-10}
  This is the continuation of
  Example~\ref{sec:defin-main-results-9}. Let
  $k\in \mathbb Z/2\mathbb Z$. Then
  $M(\bullet, \overline{0},\overline{0},\bullet; \rho_k)$ identifies
  with the ${\mathbbm{k}} G_\bullet$-submodule of $T_S(M)*G$ whose underlying vector
  space is generated by
  $(x_{\bullet,a}x_{a,b}x_{b,\bullet}*{\varepsilon}_k)_{a,b\in \mathbb Z/2\mathbb
    Z}$. On this submodule, the action of $\tau$ is such that
  $^\tau(x_{\bullet,a}x_{a,b}x_{b,\bullet}*{\varepsilon}_k) =
  (-1)^{k+\overline{1}}
  x_{\bullet,a+\overline{1}}x_{a+\overline{1},b+\overline{1}}x_{b+\overline{1},\bullet}*{\varepsilon}_k$.
\end{ex}

The main results of this article are
based on a description of the quiver ${\mathbbm{k}} Q_G$ in terms of
intertwiners.

\begin{defn}
  \label{sec:defin-notat-main-4}
  An \emph{intertwiner} relative to $M$ is a morphism of
  ${\mathbbm{k}} G_{i_0}$-modules $U\to M(\underline i;V)$, for some sequence
  $\underline i = i_0,\ldots,i_n$ in $Q_0$ and some modules
  $U\in \mathrm{mod}({\mathbbm{k}} G_{i_0})$ and $V\in \mathrm{mod}({\mathbbm{k}} G_{i_n})$.
\end{defn}

Note that, in the previous definition, if $U = {\mathbbm{k}} G_{i_0} \cdot {\varepsilon}_U$
and $V = {\mathbbm{k}} G_{i_n} \cdot {\varepsilon}_V$ for primitive idempotents
${\varepsilon}_U\in {\mathbbm{k}} G_{i_0}$ and ${\varepsilon}_V\in {\mathbbm{k}} G_{i_n}$, then all intertwiners
$U\to M(\underline i;V)$ take their values in $T_S(M)*G$.

\begin{ex}
  \label{sec:defin-main-results-11}
  This is the continuation of
  Example~\ref{sec:defin-main-results-10}. Let $s,t\in \mathbb
  Z/2\mathbb Z$. For all $a,b\in \mathbb Z/2\mathbb Z$, the following
  holds true in $M(\bullet,\overline{0},\overline{0},\bullet;\rho_t)$,
  \[
    {\varepsilon}_s \cdot (x_{\bullet,a}x_{a,b}x_{b,\bullet} \otimes {\varepsilon}_t) =
    \frac{1}{2} ( x_{\bullet,a}x_{a,b}x_{b,\bullet} \otimes {\varepsilon}_t
    +(-1)^{s+\overline{1}+t}x_{\bullet,a+\overline{1}}x_{a+\overline{1},b+\overline{1}}x_{b+\overline{1},\bullet}
    \otimes {\varepsilon}_t)\,.
  \]
  Thus,
  $\Hom_{{\mathbbm{k}} G_\bullet}(\rho_s,M(\bullet,\overline{0},\overline{0},\bullet;\rho_t))$
  is two-dimensional with basis $(F_a)_{a\in \mathbb Z/2\mathbb Z}$,
  where $F_a$ denotes the following intertwiner relative to $M$, for
  all $a\in \mathbb Z/2\mathbb Z$,
  \[
    \begin{array}{rcl}
      \rho_s & \longrightarrow &
                                 M(\bullet,\overline{0},\overline{0},\bullet;\rho_t)
      \\
      {\varepsilon}_s & \longmapsto &
                           \frac{1}{2} ( x_{\bullet,\overline{0}}x_{\overline{0},a}x_{a,\bullet} \otimes {\varepsilon}_t
                           +(-1)^{s+\overline{1}+t}x_{\bullet,\overline{1}}x_{\overline{1},a+\overline{1}}x_{a+\overline{1},\bullet}
                           \otimes {\varepsilon}_t)\,.
    \end{array}
    \]
\end{ex}

The main purpose of this article is to provide explicit formulas in
the main theorems. These formulas use the following notation.

\begin{notation}
  \label{sec:defin-notat-main-5}
  Let $\underline i = i_0,\ldots,i_n$ be a sequence in $Q_0$ where
  $n\geqslant 1$. Let $U\in \mathrm{mod}({\mathbbm{k}} G_{i_0})$ and
  $V\in \mathrm{mod}({\mathbbm{k}} G_{i_n})$. Let $f\in \Hom_{{\mathbbm{k}} G_{i_0}}(U, M(\underline
  i; V))$. For all $u\in U$, write the decomposition of $f(u)$ along
  (\ref{eq:42}) as follows
  \[
  f(u) = \sum_{\underline y} f_{\underline y}(u)\,;
  \]
  and, for all sequences $\underline y = y_1,\ldots,y_n$, where
  $y_t \in [G/G_{i_t}]$ for all $t\in \{1,\ldots,n\}$, write
  symbolically the element $f_{\underline y}(u)$ of
  $M_{\underline y}(\underline i;V)$ as follows, with the sum sign
  omitted,
  \[
  f_{\underline{y}}(u) = f^{(1)}_{\underline{y}}(u) \otimes \cdots
  \otimes f^{(n)}_{\underline{y}}(u) \otimes y_1\cdots y_n
  f^{(0)}_{\underline{y}}(u)\,;
  \]
  hence, $f_{\underline y}^{(t)}(u)$ is meant to lie in
  $_{y_1\cdots y_{t-1}\cdot i_{t-1}} M_{y_1\cdots y_t \cdot i_t}$, for
  all $t\in \{1,\ldots,n\}$, and $f_{\underline y}^{(0)}(u)$ is meant
  to lie in $V$; as a whole,
  \begin{equation}
    \label{eq:11}
  f(u) = \sum_{\underline y} f^{(1)}_{\underline{y}}(u) \otimes \cdots
  \otimes f^{(n)}_{\underline{y}}(u) \otimes y_1\cdots y_n
  f^{(0)}_{\underline{y}}(u)\,.
\end{equation}
\end{notation}

\subsection{The algebra of intertwiners}
\label{sec:algebra-intertwiners}

Now here is the operation to be used as the multiplication in $\intw$.

\begin{defn}
  \label{sec:defin-notat-main-6}
  Let $\underline i'' = i_0,\ldots,i_{m+n}$ be a sequence in $Q_0$,
  where $m,n\geqslant 1$. Denote the sequences $i_0,\ldots,i_m$ and
  $i_m,i_{m+1},\ldots,i_{m+n}$ by $\underline i$ and $\underline i'$,
  respectively. Let $U\in \mathrm{mod}({\mathbbm{k}} G_{i_0})$, $V \in \mathrm{mod}({\mathbbm{k}} G_{i_m})$,
  and $W\in \mathrm{mod}({\mathbbm{k}} G_{i_{m+n}})$. For all $f\in \Hom_{{\mathbbm{k}}
    G_{i_0}}(U,M(\underline i; V))$ and $f'\in \Hom_{{\mathbbm{k}} G_{i_m}}(V,
  M(\underline i';W))$,
  define a ${\mathbbm{k}}$-linear mapping
\[
f' \circledast f \colon U \to M(\underline{i''} ; W)
\]
by
\begin{equation}
  \label{eq:36}
f' \circledast f = \sum_{\underline{y''}} (f'\circledast f)_{\underline{y''}}\,,
\end{equation}
where
\begin{itemize}
\item $\underline{y''}$ runs through all sequences
$y_1,\ldots,y_{m+n}$ with $y_t\in \left[ G/G_{i_t} \right]$ for all
$t$
\item  and $(f' \circledast f)_{\underline{y''}}$ is
the mapping $U \to M_{\underline y''}(\underline i'';W)$ such that,
for all $u\in U$,
\[
\begin{array}{rcl}
  (f' \circledast f)_{\underline{y''}}(u)
  & = & f^{(1)}_{\underline{y}}(u) \otimes \cdots \otimes
        f^{(m)}_{\underline{y}}(u) \otimes
  \\
  & & \,^{y_1\cdots y_m}(f_{\underline{y'}}'^{(1)}(f^{(0)}_{\underline{y}}(u))) \otimes \cdots \otimes
      \,^{y_1\cdots y_m}(f_{\underline{y'}}'^{(n)}(f^{(0)}_{\underline{y}}(u))) \otimes \\
  & & y_1\cdots y_{m+n} f_{\underline{y'}}'^{(0)}(f^{(0)}_{\underline{y}}(u)) \,,
\end{array}
\]
where $\underline{y} = y_1,\ldots,y_m$ and
$\underline{y'}=y_{m+1},\ldots,y_{m+n}$.
\end{itemize}
\end{defn}

Note that, in the previous definition, if $U = {\mathbbm{k}} G_{i_0} \cdot {\varepsilon}_U$,
$V = {\mathbbm{k}} G_{i_m} \cdot {\varepsilon}_V$, and $W = {\mathbbm{k}} G_{i_{m+n}}\cdot {\varepsilon}_W$ for
primitive idempotents ${\varepsilon}_U\in {\mathbbm{k}} G_{i_0}$, ${\varepsilon}_V\in {\mathbbm{k}} G_{i_m}$, and
${\varepsilon}_W \in {\mathbbm{k}} G_{i_{m+n}}$, then
\begin{equation}
  \label{eq:12}
  (f'\circledast f)({\varepsilon}_U) = f({\varepsilon}_U) \cdot f'({\varepsilon}_v)\,,
\end{equation}
where the product on the right-hand side is taken in $T_S(M)*G$.

\begin{rem}
  \label{sec:defin-main-results-5}
The definition of $\circledast$ is technical. Actually, it is an
explicit reformulation of a simple operation in the monoidal category
$(\mathrm{mod}(A^e),\otimes_A)$. More precisely, using the comment that
follows Definition~\ref{sec:defin-notat-main-1},
\begin{itemize}
\item first, $M(\underline i'') \simeq M(\underline i)\otimes_A
  M(\underline i')$, see Lemma~\ref{sec:spaces-munderline-i-3};
\item next, if $M(\underline i;V)$, $M(\underline i';W)$, and
  $M(\underline i'';W)$ are identified with $M(\underline i)\otimes_A
  V$, $M(\underline i')\otimes_A W$, and $M(\underline i'')\otimes_A
  W$, respectively, then $\circledast$ is an explicit reformulation of
  the operation introduced in part (1) of
  Remark~\ref{sec:defin-main-results-2}, see
  Lemma~\ref{sec:comp-intertw} for a precise statement and proof.
\end{itemize}
\end{rem}

In view of the previous remark, $\circledast$ does yield intertwiners,
see Lemma~\ref{sec:comp-intertw}, and it is associative, see
Proposition~\ref{sec:comp-intertw-2}.

\begin{defn}
  \label{sec:defin-main-results-3}
  The \emph{algebra of intertwiners} relative to $M$ is denoted by
  $\intw$ and defined by
  \begin{equation}
    \label{eq:26}
  \intw = \left(
    \bigoplus_{
    n,\underline i,U,V}
    \Hom_{{\mathbbm{k}} G_{i_0}}(U, M(\underline i;V)), \circledast
    \right)\,,
  \end{equation}
    where $n$ runs through all non negative integers, $\underline i =
    i_0,\ldots,i_n$ runs through all sequences in $[G\backslash Q_0]$,
    and $U$ and $V$ run through $\irr(G_{i_0})$ and $\irr(G_{i_n})$, respectively.
\end{defn}

It is hence possible to associate an element of $\intw$ to every path
in $Q_G$.
\begin{notation}
  \label{sec:defin-notat-main-11}
  For all paths in $Q_G$
  \[
   \gamma \colon (i_0,U_0)\xrightarrow{f_1} (i_1,U_1) \to\cdots \to (i_{n-1},U_{n-1})
  \xrightarrow{f_n} (i_n,U_n)\,,
  \]
  denote by $f_\gamma$ the following element of $\intw$ lying in $\Hom_{{\mathbbm{k}}
    G_{i_0}}(U_0,M(i_0,\ldots,i_n;U_n))$,
  \begin{equation}
    \label{eq:23}
    f_{\gamma}= f_n \circledast f_{n-1}\circledast \cdots \circledast f_1\,.
  \end{equation}
\end{notation}

\subsection{Dual intertwiners and their pairing with intertwiners}
\label{sec:dual-intertw-their}

As stated below, see Theorem~\ref{sec:defin-main-results}, assigning
$f_\gamma$ to $\gamma$ for every path $\gamma$ in $Q_G$ yields an
isomorphism of algebras from ${\mathbbm{k}} Q_G$ to $\intw^{\mathrm{op}}$. The purpose of
this article is to explain how to decompose any element of $\intw$ as
a linear combination of the $f_\gamma$'s. This involves non-degenerate
pairings between certain spaces of intertwiners relative to $M$ and to
$M^*$, respectively.  Here, $M^*$ denotes the dual vector space
$\Hom_{{\mathbbm{k}}}(M,{\mathbbm{k}})$, it has a structure of $S$-bimodule such that
$_iM^*_j = (\,_jM_i)^*$ for all $i,j\in Q_0$, and $G$ acts on $M^*$ in
such a way that, for all $\varphi\in M^*$ and $g\in G$,
\[
^g\varphi  = \varphi(\,^{g^{-1}}\bullet)\,.
\]
This action is such that $^g(\,_iM^*_j) =\,_{g\cdot i}M^*_{g\cdot j}$
for all $i,j\in Q_0$ and $g\in G$. Hence, the previous considerations
may be applied to $M^*$ instead of to $M$.

\begin{ex}
  \label{sec:defin-main-results-12}
  This is the continuation of
  Example~\ref{sec:defin-main-results-11}. For all vertices $i,j$ of
  $Q$, the vector space $_iM_j^*$ is one-dimensional; denote by
  $x_{i,j}'$ its base element, dual to the base element $x_{j,i}$ of
  $_jM_i$; Hence, $^{\tau}x_{i,j}' = -x_{\tau(i),\tau(j)}'$. After
  replacing $M$ by $M^*$, the considerations of
  Example~\ref{sec:defin-main-results-11} yield that
  $\Hom_{{\mathbbm{k}} G_\bullet}(\rho_t,M^*(\bullet,\overline{0},\overline{0},\bullet;\rho_s))$
  is two-dimensional with basis
  $(\phi_a)_{a\in \mathbb Z/2\mathbb Z}$, where $\phi_a$ denotes the
  following intertwiner relative to $M^*$, for all
  $a\in \mathbb Z/2\mathbb Z$,
  \[
    \begin{array}{rcl}
      \rho_t & \longrightarrow &
                                 M^*(\bullet,\overline{0},\overline{0},\bullet;\rho_s)
      \\
      {\varepsilon}_t & \longmapsto &
                           x_{\bullet,\overline{0}}x_{\overline{0},a}x_{a,\bullet} \otimes {\varepsilon}_s
                           +(-1)^{t+\overline{1}+s}x_{\bullet,\overline{1}}x_{\overline{1},a+\overline{1}}x_{a+\overline{1},\bullet}
                           \otimes {\varepsilon}_s)\,.
    \end{array}
  \]
\end{ex}

\begin{notation}
  \label{sec:defin-notat-main-12}
  Let $\underline i = i_0,\ldots,i_n$ be a sequence in $Q_0$, where
  $n\geqslant 0$. Let $U\in \mathrm{mod}({\mathbbm{k}} G_{i_0})$ and $V\in \mathrm{mod}({\mathbbm{k}}
    G_{i_n})$.
  \begin{enumerate}
  \item Denote by $\underline i^o$ the sequence
    $i_n,i_{n-1},\ldots,i_0$.
  \item Let $\varphi \in \Hom_{{\mathbbm{k}} G_{i_n}}(V, M^*(\underline
    i^o,U))$. Proceeding similarly as done in
    Notation~\ref{sec:defin-notat-main-5},
    \begin{itemize}
    \item denote by $\varphi^{\underline x}$ the composition of
      $\varphi$ with the canonical projection $M^*(\underline i^o;U)
      \to M^*_{\underline x}(\underline i^o;U)$, for all sequences
      $\underline x = x_{n-1},x_{n-2},\ldots,x_0$ where $x_t\in
      [G/G_{i_t}]$ for all $t\in \{0,\ldots,n-1\}$,
    \item and for all such $\underline x$ and all $v\in V$, write
      symbolically the element $\varphi^{\underline x}(v)$ of
      $M^*_{\underline x}(\underline i^o;U)$ as follows,
      \[
      \varphi^{\underline x}(v) =
      \varphi_{(n)}^{\underline{x}}(v) \otimes \cdots \otimes
      \varphi^{\underline{x}}_{(1)}(v) \otimes x_{n-1}\cdots x_0
      \varphi^{\underline{x}}_{(0)}(v)\,.
      \]
      Hence, $\varphi^{\underline x}_{(t)}(v)$ is meant to lie in
      $_{(x_{n-1}x_{n-2}\cdots x_t)\cdot i_t}M^*_{(x_{n-1}x_{n-2}\cdots
        x_{t-1})\cdot i_{t-1}}$ for all $t\in \{1,\ldots,n\}$ and
      $\varphi^{\underline x}_{(0)}(v)$ is meant to lie in $U$. As a
      whole,
      \begin{equation}
        \label{eq:17}
        \varphi(v) = \sum_{\underline x}\varphi^{\underline
          x}_{(n)}(v) \otimes \cdots \otimes \varphi^{\underline x}_{(1)}(v)
        \otimes x_{n-1}\cdots x_1 \varphi^{\underline x}_{(0)}(v)\,.
      \end{equation}
    \end{itemize}
  \end{enumerate}
\end{notation}

Here is an illustration of this notation.
\begin{ex}
  \label{sec:defin-main-results-16}
  This is the continuation of
  Example~\ref{sec:defin-main-results-12}. Assume that $n=2$ and
  $\underline i = \overline{0},\overline{0},\bullet$. Hence
  $\underline i^o = \bullet,\overline{0},\overline{0}$. The underlying
  vector space of the ${\mathbbm{k}} G_\bullet$-module $M^*(\underline i^o;{\mathbbm{k}}
  \cdot \mathrm{Id})$ is freely generated by $(x_{\bullet,a}'x_{a,b}'\otimes
  \tau^b)_{a,b\in \mathbb Z/2\mathbb Z}$.

  Let $a,b,s\in \mathbb Z/2\mathbb Z$. Denote by $\varphi$ the
  intertwiner $\rho_s \to M^*(\underline i^o;{\mathbbm{k}} \cdot \mathrm{Id})$
  such that
  $\varphi({\varepsilon}_s) = {\varepsilon}_s \cdot (x_{\bullet,a}'x_{a,b}'\otimes \tau^b)$,
  that is
  $\varphi({\varepsilon}_s) = \frac{1}{2}x_{\bullet,a}'x_{a,b}'\otimes \tau^b +
  \frac{(-1)^s}{2}x_{\bullet,a+\overline{1}}'x_{a+\overline{1},b+\overline{1}}'\otimes
  \tau^{b+\overline{1}}$. On one hand,
  $x_{\bullet,a}'x_{a,b}' = x_{\bullet,\tau^a\cdot
    \overline{0}}'x_{\tau^a\cdot \overline{0},\tau^a\tau^{b-a}\cdot
    \overline{0}}'$; Hence
  \[\tiny
    \left\{
      \begin{array}{rcl}
        \varphi^{\tau^a,\tau^{b-a}}({\varepsilon}_s)
        & = &
              \frac{1}{2}x_{\bullet,a}'x_{a,b}'\otimes \tau^b \\
        \varphi^{\tau^a,\tau^{b-a}}_{(2)}({\varepsilon}_s)
        \otimes
        \varphi^{\tau^a,\tau^{b-a}}_{(1)}({\varepsilon}_s)
        \otimes
        \varphi^{\tau^a,\tau^{b-a}}_{(0)}({\varepsilon}_s)
        & = &
              \frac{1}{2}x_{\bullet,a}'\otimes x_{a,b}'\otimes \mathrm{Id}\,.
      \end{array}\right.
    \]
    On the other hand,
    $x_{\bullet,a+\overline{1}}'x_{a+\overline{1},b+\overline{1}}' =
    x_{\bullet,\tau^{a+\overline{1}}\cdot \overline{0}}'x_{\tau^{a+\overline{1}}\cdot
      \overline{0},\tau^{a+\overline{1}}\tau^{b-a}\cdot \overline{0}}'$; Hence
  \[\tiny
    \left\{
      \begin{array}{rcl}
        \varphi^{\tau^{a+\overline{1}},\tau^{b-a}}({\varepsilon}_s)
        & = &
              \frac{(-1)^s}{2}x_{\bullet,a+\overline{1}}'x_{a+\overline{1},b+\overline{1}}'\otimes
              \tau^{b+\overline{1}}  \\
        \varphi^{\tau^{a+\overline{1}},\tau^{b-a}}_{(2)}({\varepsilon}_s)
        \otimes
        \varphi^{\tau^{a+\overline{1}},\tau^{b-a}}_{(1)}({\varepsilon}_s)
        \otimes
        \varphi^{\tau^{a+\overline{1}},\tau^{b-a}}_{(0)}({\varepsilon}_s)
        & = &
              \frac{(-1)^s}{2}x_{\bullet,a+\overline{1}}' \otimes x_{a+\overline{1},b+\overline{1}}'\otimes
              \mathrm{Id}\,.
      \end{array}\right.
  \]
  In view of description of $\varphi({\varepsilon}_s)$ made initially, all the
  remaining terms of the sum of (\ref{eq:17}) are zero.
\end{ex}

The above mentioned pairings are described explicitly as follows.

\begin{prop}
  \label{sec:defin-notat-main-13}
  Let $\underline i = i_0,\ldots,i_n$ be a sequence in $Q_0$, where
  $n\geqslant 0$. Let $U\in \mathrm{mod}({\mathbbm{k}} G_{i_0})$ and
  $V\in \mathrm{mod}({\mathbbm{k}} G_{i_n})$. Assume that $U$ is simple. There exists a
  non-degenerate pairing
\[
( -| - ) \colon \Hom_{{\mathbbm{k}} G_{i_0}}(U, M(\underline{i}; V)) \otimes_{\mathbbm{k}}
\Hom_{{\mathbbm{k}} G_{i_n}}(V, M^*(\underline{i}^o; U)) \to {\mathbbm{k}} 
\]
with the following property. For all
$f\in \Hom_{{\mathbbm{k}} G_{i_0}}(U, M(\underline{i}; V))$ and for all
$\varphi \in \Hom_{{\mathbbm{k}} G_{i_n}}(V, M^*(\underline{i}^o; U))$, the
scalar $(f|\varphi)$ is such that, for all $u\in U$, the element
$(f|\varphi)\,u$ of $U$ is equal to
  \begin{equation}
    \label{eq:29}
    \sum_{\underline{y}} \prod_{t=1}^n
    \varphi^{\underline{x}}_{(t)}(f_{\underline{y}}^{(0)}(u)) (
    ^{(y_1\cdots y_n)^{-1}} f_{\underline{y}}^{(t)}(u)) 
    \ ^{h_0}(\varphi^{\underline{x}}_{(0)}(f_{\underline{y}}^{(0)}(u)))\,,
  \end{equation}
  where $\underline{y}=y_1,\ldots,y_n$ runs through all sequences with
  $y_t\in [G/G_{i_t}]$ for all $t$ and
  $\underline{x} = x_{n-1},\ldots,x_0$ and $h_0$ are uniquely
  determined by the following conditions:
  \begin{enumerate}[(a)]
  \item $x_t\in \left[G/G_{i_t}\right]$ for all $t$, and $h_0\in
    G_{i_0}$.
  \item $(y_t\cdots y_n)^{-1} \in  x_{n-1}x_{n-2}\cdots x_{t-1}G_{i_{t-1}}$ for
    all $t\in \{1,\ldots,n\}$.
  \item $(y_1\cdots y_n)^{-1} = x_{n-1}x_{n-2}\cdots x_0h_0$.
  \end{enumerate}
\end{prop}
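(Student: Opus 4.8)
The plan is to obtain the pairing from the abstract, coordinate-free one in part~(2) of Remark~\ref{sec:defin-main-results-2}, specialised to the monoidal category $\cc = (\mod{A^e}, \otimes_A)$, and then to unwind its defining property into the explicit expression (\ref{eq:29}). I would take the object $X$ of that remark to be the $\k G_{i_0}$-$\k G_{i_n}$-bimodule $M(\underline i)$; the modules $U$ and $V$ are regarded as objects of $\cc$ via the embeddings $\mod{\k G_{i_0}}, \mod{\k G_{i_n}} \hookrightarrow \cc$ furnished by the isolated factor $\k$ of $A$, so that their right $A$-action is by scalars. With these conventions the comment following Definition~\ref{sec:defin-notat-main-1} gives $\cc(U, M(\underline i)\otimes_A V) = \Hom_{\k G_{i_0}}(U, M(\underline i; V))$; applying the same comment to the bimodule $M^*$ and the reversed sequence $\underline i^o$, together with an identification of the left dual $M(\underline i)'$ with $M^*(\underline i^o)$, gives $\cc(V, M(\underline i)'\otimes_A U) = \Hom_{\k G_{i_n}}(V, M^*(\underline i^o; U))$. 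Thus both Hom-spaces in the statement become morphism spaces in $\cc$ of exactly the shape required by the remark.

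Before invoking the remark I would check its two hypotheses. First, since $A$ is semi-simple, every object of $\cc$ admits a left dual given by its dual vector space with transposed bimodule action; I would verify that, for $X = M(\underline i)$, this dual is isomorphic to $M^*(\underline i^o)$ as an $A$-bimodule, the isomorphism being dictated by the reversal $\underline i \mapsto \underline i^o$ and by the factorwise duality $_iM^*_j = (\,_jM_i)^*$ built into the $S$-bimodule structure of $M^*$. (This identification is the one sub-lemma the argument really needs, and I would isolate it as such.) Second, the hypothesis $\cc(U, U) = \k\cdot\id_U$ follows from Schur's lemma: as the right $A$-action on $U$ is by scalars, $\End_{A^e}(U) = \End_{\k G_{i_0}}(U)$, and this equals $\k\cdot\id_U$ because $U$ is simple and $\k$ is algebraically closed. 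These two facts yield the non-degenerate pairing $\langle-|-\rangle$ of Remark~\ref{sec:defin-main-results-2}(2), characterised by $\langle f|\varphi\rangle\cdot\id_U = \overline\varphi\circ f$ with $\overline\varphi\colon M(\underline i)\otimes_A V \to U$ adjoint to $\varphi$; setting $(f|\varphi) := \langle f|\varphi\rangle$ then provides the claimed pairing, with non-degeneracy inherited directly.

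The substantive step is to show that the scalar $\overline\varphi\circ f$ equals the sum (\ref{eq:29}). Here $\overline\varphi = (\mathrm{ev}\otimes_A U)\circ(M(\underline i)\otimes_A\varphi)$, where $\mathrm{ev}\colon M(\underline i)\otimes_A M(\underline i)' \to A$ is the counit (evaluation) of the adjunction. Evaluating on $f(u)$ written along (\ref{eq:11}), applying $\varphi$ to the $V$-slot as in (\ref{eq:17}), and then contracting through $\mathrm{ev}$, each factor $f^{(t)}_{\underline y}(u)$ is paired with the matching dual factor $\varphi^{\underline x}_{(t)}(f^{(0)}_{\underline y}(u))$, while the residual $U$-slot survives carrying a group action. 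I expect the coset-and-group-action bookkeeping to be the main obstacle: conditions~(a)--(c) are exactly the data needed for this contraction to be well defined. Condition~(b) guarantees that the translate $^{(y_1\cdots y_n)^{-1}}f^{(t)}_{\underline y}(u)$ lands in the vector space dual to the one containing $\varphi^{\underline x}_{(t)}(f^{(0)}_{\underline y}(u))$, so that the evaluation $\varphi^{\underline x}_{(t)}(f^{(0)}_{\underline y}(u))(\,^{(y_1\cdots y_n)^{-1}}f^{(t)}_{\underline y}(u))$ makes sense; condition~(c) determines the element $h_0\in G_{i_0}$ recording the leftover $G$-action $^{h_0}(-)$ on the output in $U$. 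The remaining work is to match, term by term over $\underline y$, the $\otimes_A$-contraction against the explicit product in (\ref{eq:29}), keeping careful track of the twist introduced by each coset representative $y_t$ and its reconciliation with the representatives $x_t$ coming from the $M^*$-side; I would also record the degenerate case $n=0$, where $X$ is the unit object and the pairing reduces to the composition pairing $\varphi\circ f$, as a consistency check.
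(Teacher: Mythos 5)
Your proposal is correct and follows essentially the same route as the paper: the pairing is obtained as the abstract pairing $\langle-|-\rangle$ of Remark~\ref{sec:defin-main-results-2}(2) in $(\mod{A^e},\otimes_A)$ with $X=M(\underline i)$, using the identifications $M(\underline i;V)\simeq M(\underline i)\otimes_A V$ (the map $\Theta$ of (\ref{eq:subsp-mund-i-1})) and $M^*(\underline i^o)\simeq M(\underline i)^*$ (the map $\Lambda_{\underline i}$ of (\ref{eq:6}), which is exactly the sub-lemma you isolate), and then unwinding the adjunction explicitly. The ``coset-and-group-action bookkeeping'' you defer is precisely the content of the paper's Lemma~\ref{sec:transl-intertw-1}, where the sum over $g\in G_{i_0}$ in the explicit adjunction formula (\ref{eq:16}) collapses to the single term indexed by $h_0$ determined by conditions (a)--(c).
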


In view of computations, here is a useful explanation regarding
$\underline x$ and $h_0$ in the above proposition. For all sequences
$\underline y = y_1,\ldots, y_n$ in $G$ such that $y_t\in [G/G_{i_t}]$
for all $t\in \{1,\ldots,n\}$ and for all non zero tensors
\[
m_1\otimes \cdots \otimes m_n \in \,_{i_0}M_{y_1\cdot i_1} \otimes_{\mathbbm{k}}
\cdots \otimes_{\mathbbm{k}} \, _{(y_1\cdots y_{n-1})\cdot i_{n-1}} M _{(y_1\cdots
  y_n) \cdot i_n}\,,
\]
there exists a unique sequence $\underline x =
x_{n-1},x_{n-2},\ldots,x_0$ in $G$ such that $x_t\in [G/G_{i_t}]$ for
all $t\in \{0,\ldots,n-1\}$ and
\[
^{(y_1\cdots y_n)^{-1}}(m_1 \otimes \cdots \otimes m_n) \in
\,_{(x_{n-1} x_{n-2}\cdots x_0)\cdot i_0}M_{(x_{n-1} \cdots x_1)\cdot i_1}
\otimes_{\mathbbm{k}} \cdots \otimes_k \, _{x_{n-1}\cdot i_{n-1}} M _{i_n}\,;
\]
besides,
$(x_{n-1}x_{n-2}\cdots x_0)^{-1}(y_1\cdots y_n)^{-1} \in G_{i_0}$.
This sequence $\underline x$ is the one in the statement of
Proposition~\ref{sec:defin-notat-main-13} in which
$h_0 = (x_{n-1}x_{n-2}\cdots x_0)^{-1}(y_1\cdots y_n)^{-1}$.

When the action of $G$ on $M$ is such that arrows of $Q$ are
transformed into scalar multiples of arrows of $Q$, the
expression~(\ref{eq:29}) of $(-|-)$ simplifies to (\ref{eq:51}). The
following example illustrates the computation of $(-|-)$ in this
particular situation.
\begin{ex}
  \label{sec:defin-main-results-13}
  This is the continuation of
  Example~\ref{sec:defin-main-results-12}. Let
  $s,t\in \mathbb Z/2\mathbb Z$. Let $a,b\in \mathbb Z/2\mathbb Z$.
  Consider
  $F_a \colon \rho_s \to
  M(\bullet,\overline{0},\overline{0},\bullet;\rho_t)$ and
  $\phi_b \colon \rho_t \to
  M(\bullet,\overline{0},\overline{0},\bullet;\rho_s)$ such as in
  Examples~\ref{sec:defin-main-results-11} and
  \ref{sec:defin-main-results-12}, respectively.

  In view of (\ref{eq:51}), $(F_a|\phi_b)$ is a sum of products of
  scalars, the sum is indexed by the paths in $Q$ that appear in the
  expression of $F_a({\varepsilon}_s)$. For all such paths $\gamma$, the scalars
  forming the corresponding product appear in Table~\ref{tab:3} with
  bold face. For convenience, this table presents additional data,
  using the following notation, see
  Subsection~\ref{sec:how-pair-intertw} for more details. For a given
  path $\gamma$ in $Q$:
  \begin{itemize}
  \item $\alpha_\gamma$ denotes the coefficient of $\gamma\otimes{\varepsilon}_t$
    in $F_a({\varepsilon}_s)$;
  \item $y_1,y_2,y_3$ is the sequence of $G$ such that $y_1,y_2\in
    [G/G_{\overline{0}}]$, $y_3\in [G/G_\bullet]$ and $\gamma$ is a
    path of the shape $\bullet \to y_1\cdot \overline{0}\to
    y_1y_2\cdot\overline{0}\to y_1y_2y_3\cdot \bullet$;
  \item $(y_1y_2y_3)^{-1}(\gamma)$ denotes the path of $Q$ and
    $\chi_{(y_1y_2y_3)^{-1},\gamma}$ denotes the scalar such that
    $^{(y_1y_2y_3)^{-1}}\gamma = \chi_{(y_1y_2y_3)^{-1},\gamma} \cdot
    (y_1y_2y_3)^{-1}(\gamma)$;
  \item $(y_1y_2y_3)^{-1}(\gamma)^*$ denotes the element of the basis
    of $T_S(M^*)$ dual to the basis of $T_S(M)$ consisting of the
    paths of $Q$, associated to $(y_1y_2y_3)^{-1}(\gamma)$;
  \item $\beta_{(y_1y_2y_3)^{-1}(\gamma)^*}$ denotes the coefficient
    of $(y_1y_2y_3)^{-1}(\gamma)^*$ in $\phi_b({\varepsilon}_t)$;
  \item $\chi_{\rho_s}$ denotes the character of $\rho_s$.
  \end{itemize}
  Thus,
  \begin{equation}
    \label{eq:56}
    (F_a|\phi_b) = \left\{
      \begin{array}{ll}
        \frac{(-1)^{(s+\overline{1})a}\cdot (1+(-1)^t)}{2} & \text{if $a=b$}
        \\
        0 & \text{if not.}
      \end{array}\right.
  \end{equation}
    
    \begin{table}[!h]
    \centering
    \begin{tabularx}{\textwidth}{>{\hsize=0.25\hsize\raggedright}X>{\hsize=0.35\hsize\centering}X>{\hsize=0.4\hsize\centering\arraybackslash}X}
      \hline
      $\gamma$ &
               $x_{\bullet,\overline{0}}x_{\overline{0},a}x_{a,\bullet}$
      &
       $x_{\bullet,\overline{1}}x_{\overline{1},a+\overline{1}}x_{a+\overline{1},\bullet}$
      \\
      \hline
      $\mathbf{\alpha_\gamma}$
               &
                 $\mathbf{\frac{1}{2}}$
      &
        $\mathbf{\frac{(-1)^{s+\overline{1}+t}}{2}}$\\
      $y_1,y_2,y_3$ & $\mathrm{Id},\tau^a,\mathrm{Id}$ &
                                                         $\tau,\tau^a,\mathrm{Id}$
      \\
      $^{(y_1y_2y_3)^{-1}}\gamma$
               &
                 $(-1)^ax_{\bullet,-a}x_{-a,\overline{0}}x_{\overline{0},\bullet}$
      &
        $(-1)^{a+\overline{1}}x_{\bullet,-a}x_{-a,\overline{0}}x_{\overline{0},\bullet}$
      \\
      $\mathbf{\chi_{(y_1y_2y_3)^{-1},\gamma}}$
               &
                 $\mathbf{(-1)^a}$
      &
        $\mathbf{(-1)^{a+\overline{1}}}$
      \\
      $(y_1y_2y_3)^{-1}(\gamma)^*$
               &
                 $x_{\bullet,\overline{0}}'x_{\overline{0},-a}'x_{-a,\bullet}'$
      &
        $x_{\bullet,\overline{0}}'x_{\overline{0},-a}'x_{-a,\bullet}'$
      \\
      $\mathbf{\beta_{(y_1y_2y_3)^{-1}(\gamma)^*}}$
               &
                 $\mathbf{\left\{
                 \begin{array}{ll}
                   1 & \text{if $b=-a$}\\
                   0 & \text{if not}
                 \end{array}\right.}$
      &
                 $\mathbf{\left\{
                 \begin{array}{ll}
                   1 & \text{if $b=-a$}\\
                   0 & \text{if not}
                 \end{array}\right.}$
      \\
      $x_2,x_1,x_0,h_0$
               &
                 $\mathrm{Id},\tau^{-a},\mathrm{Id},\tau^{-a}$
      &
        $\mathrm{Id},\tau^{-a},\mathrm{Id},\tau^{-a-\overline{1}}$
      \\
      $\mathbf{\chi_{\rho_s}(h_0)}$
               &
                 $\mathbf{(-1)^{sa}}$
      &
        $\mathbf{(-1)^{s(a+\overline{1})}}$
      \\
      {\tiny contribution of $\gamma$ to $(F_a|\phi_b)$}
               &
                 $\mathbf{\left\{
                 \begin{array}{ll}
                   \frac{(-1)^{a(s+\overline{1})}}{2} & \text{if $b=-a$}\\
                   0 & \text{if not}
                 \end{array}\right.}$
      &        
                 $\mathbf{\left\{
                 \begin{array}{ll}
                   \frac{(-1)^{(s+\overline{1})a+t}}{2} & \text{if $b=-a$}\\
                   0 & \text{if not}
                 \end{array}\right.}$
      \\
      \hline
    \end{tabularx}
    \caption[$(F_a|\phi_b)$]{The scalars appearing in the expression
      (\ref{eq:51}) of $(F_a|\phi_b)$}
    \label{tab:3}
  \end{table}
\end{ex}
\begin{rem}
  \label{sec:defin-main-results-6}
Like the operation $\circledast$, the pairing $(-|-)$ has a simple
interpretation in the monoidal category $(\mathrm{mod}(A^e),\otimes_A)$. Indeed,
\begin{itemize}
\item first, $M(\underline i)$ admits $M^*(\underline i^o)$ as a left
  dual, see~(\ref{eq:6});
\item next, if $M(\underline i;V)$ and $M^*(\underline i^o;U)$ are
  identified with $M(\underline i) \otimes_A V$ and $M^*(\underline
  i^o)\otimes_A U$, respectively, then $(-|-)$ is an explicit
  reformulation of $\langle -|-\rangle$ as introduced in part (2) of
  Remark~\ref{sec:defin-main-results-2}, see~(\ref{eq:20}).
\end{itemize}
\end{rem}

Using $(-|-)$ yields intertwiners relative to $M^*$ associated
with the $f_{\gamma}$'s for paths $\gamma$ in $Q_G$.

\begin{notation}
  \label{sec:defin-main-results-1}
  Let $Q_G$ be such as in Setting~\ref{sec:defin-notat-main-10}.
  \begin{enumerate}
  \item Let $f\in (i,U) \to (j,V)$ be an arrow of $Q_G$. Using the
    basis of the vector space $\Hom_{{\mathbbm{k}} G_i}(U, M(i,j;V))$ consisting
    of the arrows $(i,U) \to (j,V)$ of $Q_G$ and using the associated
    $(-|-)$-dual basis of $\Hom_{{\mathbbm{k}} G_j}(V, M^*(j,i;U))$, denote by
    $f^\vee$ the dual element associated with $f$. In other words,
    $f^\vee$ lies in $\Hom_{{\mathbbm{k}} G_j}(V, M^*(j,i;U))$ and, for all arrows
    $f'\colon (i,U) \to (j,V)$ of $Q_G$,
    \[
    (f' |f^\vee) =
    \left\{
      \begin{array}{ll}
        1 & \text{if $f'=f$} \\
        0 & \text{otherwise.}
      \end{array}\right.
    \]
  \item For all paths $\gamma$ in $Q_G$
    \[
    \gamma \colon (i_0,U_0)\xrightarrow{f_1} (i_1,U_1) \to\cdots \to
    (i_{n-1},U_{n-1}) \xrightarrow{f_n} (i_n,U_n)\,,
    \]
    denote by $\varphi_\gamma$ the following intertwiner
    \[
    \varphi_\gamma = f_1^\vee \circledast f_2^\vee \circledast \cdots
    \circledast f_n^\vee \in \Hom_{{\mathbbm{k}}
      G_{i_n}}( U_n, M^*(i_n,i_{n-1},\ldots,i_0;U_0))\,.
    \]
  \end{enumerate}
\end{notation}

\begin{ex}
  \label{sec:defin-main-results-14}
  This is the continuation of Example~\ref{sec:defin-main-results-13},
  it illustrates Notation~\ref{sec:defin-main-results-1}.

  Let $k\in \mathbb Z/2\mathbb Z$. Denote by $a_k'$ and $b_k'$ the
  following respective intertwiners relative to $M^*$
  \[
    \begin{array}{rclcrcl}
      \rho_k
      & \longrightarrow
      &
        M^*(\bullet,\bullet;\rho_{k+\overline{1}})
      &&
         {\mathbbm{k}} \cdot \mathrm{Id}
      & \longrightarrow
      &
        M^*(\overline{0},\bullet;\rho_k) \\
      {\varepsilon}_k
      &
        \longmapsto
      &
        x_{\bullet,\bullet}'\otimes {\varepsilon}_{k + \overline{1}}
      &&
         \mathrm{Id}
      &
        \longmapsto
      &
        x_{\overline{0},\bullet}'\otimes {\varepsilon}_k\,.
    \end{array}
  \]
  Denote by $c_k'$ and $d_k'$ the following respective
  intertwiners relative to $M^*$
  \[
    \begin{array}{rclcrcl}
      \rho_k
      & \longrightarrow
      &
        M^*(\bullet,\overline{0};{\mathbbm{k}} \cdot \mathrm{Id})
      &&
         {\mathbbm{k}} \cdot \mathrm{Id}
      & \longrightarrow
      &
        M^*(\overline{0},\overline{0};{\mathbbm{k}} \cdot
        \mathrm{Id}) \\
      {\varepsilon}_k
      &
        \longmapsto
      &
        (x_{\bullet,\overline{0}}' \otimes \mathrm{Id} +
        (-1)^{k+\overline{1}} x_{\bullet,\overline{1}}' \otimes \tau)
      &&
         \mathrm{Id}
      &
        \longmapsto
      &
        x_{\overline{0},k}' \otimes \tau^k\,.
    \end{array}
  \]
  Then $(a_k|a_k')=1$, $(b_k|b_k')=1$, $(c_k|c_k')=1$ and
  $(d_k|d_\ell')=\delta_{k,\ell}$ for all
  $\ell\in \mathbb Z/2\mathbb Z$, where $\delta_{k,\ell}$ stands for
  the Kronecker symbol. Using~(\ref{eq:51}), the intermediate steps of
  the computation of these pairings are given in Tables~\ref{tab:4},
  \ref{tab:5}, \ref{tab:6} and \ref{tab:7} (see
  Example~\ref{sec:defin-main-results-13}). In view of the shape of
  $Q_G$, these equalilities entail that $a_k^\vee = a_k'$,
  $b_k^\vee=b_k'$, $c_k^\vee=c_k$ and $d_k^\vee=d_k'$.
  \begin{table}[!ht]
    \tiny
    \centering
    \begin{tabular}{cccccccccc}
      \hline
      $\gamma$
      & $\mathbf{\alpha_\gamma}$
      &
        $y_1$
      &
        $^{y_1^{-1}}\gamma$
      &
        $\mathbf{\chi_{y_1^{-1},\gamma}}$
      &
        $y_1^{-1}(\gamma)^*$
      &
        $\mathbf{\beta_{y_1^{-1}(\gamma)^*}}$
      &
        $x_0,h_0$
      &
        $\mathbf{\chi_{\rho_{k+\overline{1}}}(h_0)}$
      &
        {\tiny contr. of $\gamma$}
      \\
      \hline
      $x_{\bullet,\bullet}$
      &
        $\mathbf{\frac{1}{2}}$
      &
        $\mathrm{Id}$
      &
        $x_{\bullet,\bullet}$
      &
        $\mathbf{1}$
      &
        $x_{\bullet,\bullet}'$
      &
        $\mathbf{1}$
      &
        $\mathrm{Id},\mathrm{Id}$
      &
        $\mathbf{1}$
      &
        $\mathbf{1}$
      \\
      \hline
    \end{tabular}
    \caption{$(a_k|a_k')=1$}
    \label{tab:4}
  \end{table}
  \begin{table}[!ht]
    \tiny
    \centering
    \begin{tabular}{cccccccccc}
      \hline
      $\gamma$
      & $\mathbf{\alpha_\gamma}$
      &
        $y_1$
      &
        $^{y_1^{-1}}\gamma$
      &
        $\mathbf{\chi_{y_1^{-1},\gamma}}$
      &
        $y_1^{-1}(\gamma)^*$
      &
        $\mathbf{\beta_{y_1^{-1}(\gamma)^*}}$
      &
        $x_0,h_0$
      &
        $\mathbf{\chi_{\rho_k}(h_0)}$
      &
        {\tiny contr. of $\gamma$}
      \\
      \hline
      $x_{\bullet,\overline{0}}$
      &
      $\mathbf{\frac{1}{2}}$
      &
      $\mathrm{Id}$
      &
      $x_{\bullet,\overline{0}}$
      &
      $\mathbf{1}$
      &
      $x_{\overline{0},\bullet}'$
      &
      $\mathbf{1}$
      &
      $\mathrm{Id},\mathrm{Id}$
      &
      $\mathbf{1}$
      &
      $\mathbf{\frac{1}{2}}$
      \\
      $x_{\bullet,\overline{1}}$
      &
        $\mathbf{\frac{(-1)^{k+\overline{1}}}{2}}$
      &
        $\tau$
      &
        $-x_{\bullet,\overline{0}}$
      &
        $\mathbf{-1}$
      &
        $x_{\overline{0},\bullet}'$
      &
        $\mathbf{1}$
      &
        $\mathrm{Id},\tau$
      &
        $\mathbf{(-1)^k}$
      &
        $\mathbf{\frac{1}{2}}$
      \\
      \hline
    \end{tabular}
    \caption{$(b_k|b_k')=\frac{1}{2}+\frac{1}{2}=1$}
    \label{tab:5}
  \end{table}
  \begin{table}[!ht]
    \tiny
    \centering
    \begin{tabular}{cccccccccc}
      \hline
      $\gamma$
      & $\mathbf{\alpha_\gamma}$
      &
        $y_1$
      &
        $^{y_1^{-1}}\gamma$
      &
        $\mathbf{\chi_{y_1^{-1},\gamma}}$
      &
        $y_1^{-1}(\gamma)^*$
      &
        $\mathbf{\beta_{y_1^{-1}(\gamma)^*}}$
      &
        $x_0,h_0$
      &
        $\mathbf{\chi_{\rho_k}(h_0)}$
      &
        {\tiny contr. of $\gamma$}
      \\
      \hline
      $x_{\overline{0},\bullet}$
      &
      $\mathbf{1}$
      &
      $\mathrm{Id}$
      &
      $x_{\overline{0},\bullet}$
      &
      $\mathbf{1}$
      &
      $x_{\bullet,\overline{0}}'$
      &
      $\mathbf{1}$
      &
      $\mathrm{Id},\mathrm{Id}$
      &
      $\mathbf{1}$
      &
      $\mathbf{\frac{1}{2}}$
      \\
      \hline
    \end{tabular}
    \caption{$(c_k|c_k')=1$}
    \label{tab:6}
  \end{table}
  \begin{table}[!ht]
    \tiny
    \centering
    \begin{tabular}{cccccccccc}
      \hline
      $\gamma$
      & $\mathbf{\alpha_\gamma}$
      &
        $y_1$
      &
        $^{y_1^{-1}}\gamma$
      &
        $\mathbf{\chi_{y_1^{-1},\gamma}}$
      &
        $y_1^{-1}(\gamma)^*$
      &
        $\mathbf{\beta_{y_1^{-1}(\gamma)^*}}$
      &
        $x_0,h_0$
      &
        $\mathbf{\chi_{{\mathbbm{k}}\cdot\mathrm{Id}}(h_0)}$
      &
        {\tiny contr. of $\gamma$}
      \\
      \hline
      $x_{\overline{0},k}$
      &
      $\mathbf{(-1)^k}$
      &
      $\mathrm{\tau^k}$
      &
      $(-1)^kx_{k,\overline{0}}$
      &
      $\mathbf{(-1)^k}$
      &
      $x_{\overline{0},k}'$
      &
        $\mathbf{\delta_{k,\ell}}$
      &
      $\tau^{-k},\mathrm{Id}$
      &
        $\mathbf{1}$
      &
        $\mathbf{\delta_{k,\ell}}$
      \\
      \hline
    \end{tabular}
    \caption{$(d_k|d_\ell')=\delta_{k,\ell}$}
    \label{tab:7}
  \end{table}

  Now it is possible to give examples of intertwiners relative to
  $M^*$ and associated to paths in $Q_G$. Let
  $s,t,a\in \mathbb Z/2\mathbb Z$. Denote by $\gamma$ the following
  path of $Q_G$.
  \[
    (\bullet,\rho_s) \xrightarrow{b_s} (\overline{0},{\mathbbm{k}}\cdot
    \mathrm{Id}) \xrightarrow{d_a} (\overline{0},{\mathbbm{k}}\cdot
    \mathrm{Id}) \xrightarrow{c_t} (\bullet,\rho_t)\,.
  \]
  Then $\varphi_\gamma = b_s^\vee \circledast d_a^\vee \circledast
  c_t^\vee = b_s' \circledast d_a' \circledast
  c_t'$. This is the intertwiner $\rho_t\to
  M^*(\bullet,\overline{0},\overline{0},\bullet;\rho_s)$ such that
  (see~(\ref{eq:12}))
  \[
    \begin{array}{rcl}
      \varphi_\gamma({\varepsilon}_t)
      & = &
            c_t'({\varepsilon}_t) \cdot d_a'(\mathrm{Id}) \cdot b_s'(\mathrm{Id})
      \\
      & = &
            (x_{\bullet,\overline{0}}'\otimes \mathrm{Id} +
            (-1)^{t+\overline{1}}x_{\bullet,\overline{1}}' \otimes
            \tau)
            \cdot
            (x_{\overline{0},a}' \otimes \tau^a)
            \cdot
            (x_{\overline{0},\bullet}' \otimes {\varepsilon}_s)
      \\
      & = &
            (-1)^{a(s+\overline{1})}
            (x_{\bullet,\overline{0}}'x_{\overline{0},a}'x_{a,\bullet}' \otimes {\varepsilon}_s +
            (-1)^{t+\overline{1}+s}
            x_{\bullet,\overline{1}}'x_{\overline{1},a+\overline{1}}'x_{a+\overline{1},\bullet}' \otimes {\varepsilon}_s)\,.
    \end{array}
    \]
    Thus, using the notation introduced in
    Example~\ref{sec:defin-main-results-12}, $\varphi_\gamma = (-1)^{a(s+\overline{1})}\phi_a$.
\end{ex}

\subsection{The comparison of the algebra of intertwiners with the
  Morita reduction}
\label{sec:comp-algebra-intertw}

Now, here is the result comparing ${\mathbbm{k}} Q_G$ and $\intw$.

\begin{thm}
  \label{sec:defin-main-results}
  Keep Settings~\ref{sec:defin-notat-main-2} and
  \ref{sec:defin-notat-main-10}.
  \begin{enumerate}
  \item Assigning $f_\gamma$ to every path
  $\gamma$ in $Q_G$ yields an isomorphism of algebras
  \begin{equation}
    \label{eq:32}
    {\mathbbm{k}} Q_G \xrightarrow{\sim}\intw^{\mathrm{op}}\,.
  \end{equation}
\item For all $f\in \intw$, say lying in
  $\Hom_{{\mathbbm{k}} G_{i_0}}(U_0, M(i_0,\ldots,i_n; U_n))$,
  \begin{equation}
    \label{eq:28}
  f = \sum_{\gamma} (f|\varphi_\gamma)\,f_\gamma\,,
\end{equation}
where $\gamma$ runs through all paths of $Q_G$ of the shape
$(i_0,U_0)\to (i_1,\bullet)\to \cdots \to (i_{n-1},\bullet) \to
(i_n,U_n)$.
  \end{enumerate}
\end{thm}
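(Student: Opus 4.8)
The plan is to prove the two assertions together, the organising principle being that $\{f_\gamma\}$ and $\{\varphi_\gamma\}$ are mutually dual bases for the pairing $(-|-)$. I would first define the linear map $\Phi\colon\k Q_G\to\intw^\op$ by $\gamma\mapsto f_\gamma$ on paths, sending the trivial path at $(i,U)$ to $\id_U\in\Hom_{\k G_i}(U,M(i;U))$ (recall $M(i;U)=U$), so that vertices go to the idempotents of $\intw$ and the units match. That $\Phi$ is multiplicative is then essentially formal: for a concatenation of $\gamma$ followed by $\delta$, the defining formula $f_\gamma=f_n\circledast\cdots\circledast f_1$ combined with associativity of $\circledast$ (Proposition~\ref{sec:comp-intertw-2}) yields $f_{\delta\gamma}=f_\delta\circledast f_\gamma$; the reversal of order relative to (\ref{eq:12}) is exactly what is recorded by the product of $\intw^\op$. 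Thus the only substantive part of (1) is bijectivity, i.e.\ that for fixed endpoints the family $\{f_\gamma\}$ is a basis of the homogeneous component $\Hom_{\k G_{i_0}}(U_0,M(i_0,\ldots,i_n;U_n))$.

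For this basis statement I would work inside $(\mod{A^e},\otimes_A)$. Using $M(\underline i;V)\simeq M(\underline i)\otimes_A V$ together with the factorisation $M(\underline i)\simeq M(i_0,i_1)\otimes_A\cdots\otimes_A M(i_{n-1},i_n)$ (Lemma~\ref{sec:spaces-munderline-i-3}), the component above is identified with $\cc(U_0,M(i_0,i_1)\otimes_A\cdots\otimes_A M(i_{n-1},i_n)\otimes_A U_n)$. Since $A$ is semisimple, inserting a complete set of irreducibles at each intermediate slot gives, via the composition operation of Remark~\ref{sec:defin-main-results-2}(1), an isomorphism
\[
\bigoplus_{U_1,\ldots,U_{n-1}}\ \bigotimes_{t=1}^{n}\cc\bigl(U_{t-1},M(i_{t-1},i_t)\otimes_A U_t\bigr)\ \xrightarrow{\ \sim\ }\ \cc\Bigl(U_0,M(i_0,i_1)\otimes_A\cdots\otimes_A M(i_{n-1},i_n)\otimes_A U_n\Bigr),
\]
with each $U_t$ running through $\irr(G_{i_t})$. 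By Lemma~\ref{sec:comp-intertw} this isomorphism carries a tuple of arrows $(f_1,\ldots,f_n)$ precisely to $f_n\circledast\cdots\circledast f_1=f_\gamma$. As the arrows $(i_{t-1},U_{t-1})\to(i_t,U_t)$ form a basis of each factor by Setting~\ref{sec:defin-notat-main-10}, the $f_\gamma$ form a basis of the target, which proves bijectivity of $\Phi$ and settles (1).

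Granting (1), the formula (\ref{eq:28}) is equivalent to the assertion that $\{\varphi_\gamma\}$ is the basis dual to $\{f_\gamma\}$ for $(-|-)$, that is $(f_\gamma|\varphi_\delta)=1$ if $\gamma=\delta$ and $0$ otherwise, for paths $\gamma,\delta$ with the same endpoints $(i_0,U_0)$ and $(i_n,U_n)$; expanding an arbitrary $f$ in the basis $\{f_\gamma\}$ and pairing with $\varphi_\gamma$ then produces the stated coefficients. To obtain this duality I would prove a multiplicativity of $(-|-)$ along $\circledast$: a one-step compatibility expressing the pairing of a composite $f'\circledast f$ against $\varphi$ in terms of a pairing of $f$ (resp.\ $f'$) against a truncation of $\varphi$. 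Iterating this identity factors the pairing of $f_\gamma=f_n\circledast\cdots\circledast f_1$ against $\varphi_\delta=g_1^\vee\circledast\cdots\circledast g_n^\vee$ as $\prod_{t=1}^{n}(f_t|g_t^\vee)$; by the definition of $f^\vee$ in Notation~\ref{sec:defin-main-results-1} this product equals $1$ when $\gamma=\delta$ and $0$ otherwise, the vanishing for paths through differing intermediate vertices being automatic from the grading of $\intw$.

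The main obstacle is precisely this multiplicativity of the pairing under $\circledast$. I would not attempt it by grinding through (\ref{eq:29}); instead I would establish it categorically in $(\mod{A^e},\otimes_A)$, where $\circledast$ is the composition of Remark~\ref{sec:defin-main-results-2}(1) and $(-|-)$ is the duality pairing of Remark~\ref{sec:defin-main-results-2}(2) attached to the left dual $M^*(\underline i^o)$ of $M(\underline i)$ (see~(\ref{eq:6}) and~(\ref{eq:20})). In these terms the required identity becomes a statement about how the adjunction counit of a tensor product factors through the counits of its factors, together with the normalisation $\cc(U,U)=\k\cdot\id_U$ coming from the simplicity of $U$. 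Once this categorical compatibility is isolated, transporting it back through the explicit identifications recovers the numerical factorisation, and hence both the duality of $\{f_\gamma\}$ and $\{\varphi_\gamma\}$ and the formula (\ref{eq:28}).
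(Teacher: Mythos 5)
Your overall strategy is sound, and roughly half of it coincides with the paper's proof: the duality $(f_\gamma|\varphi_{\gamma'})=\delta_{\gamma,\gamma'}$, obtained by iterating a multiplicativity of $(-|-)$ under $\circledast$ that is itself proved categorically from the way adjoints of tensor products factor through the adjoints of the factors, is exactly the paper's route (this is \eqref{eq:8}, deduced from Lemma~\ref{sec:pair-intertw-2}, which in turn rests on Lemma~\ref{sec:pair-betw-morph-4}); part (2) then follows as you say. Where you genuinely diverge is on the bijectivity of $\k Q_G\to\intw^\op$. The paper only extracts \emph{injectivity} from the dual family and then proves surjectivity by a dimension count: it invokes Demonet's Morita equivalences upgraded to the graded sense to identify the degree-$n$ component of the basic algebra $\tilde e\cdot(T_S(M)*G)\cdot\tilde e$ with $\bigoplus\e_{U_0}\cdot M(i_0,\ldots,i_n)\cdot\e_{U_n}\simeq\bigoplus\Hom_{\k G_{i_0}}(U_0,M(i_0,\ldots,i_n;U_n))$. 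You instead prove directly that $\{f_\gamma\}$ is a basis of each homogeneous component via the semisimple decomposition $\cc(U_0,X_1\otimes_A\cdots\otimes_A X_n\otimes_A U_n)\simeq\bigoplus_{U_1,\ldots,U_{n-1}}\bigotimes_t\cc(U_{t-1},X_t\otimes_A U_t)$, transported through Lemma~\ref{sec:comp-intertw}. This is a legitimate and arguably cleaner alternative: it makes part (1) independent of the pairing and of the graded refinement of \cite{MR2578593}, at the price of having to justify carefully the iterated ``insertion of irreducibles'' isomorphism (which does hold here since $A$ is semisimple and $\End(U_t)=\k$).

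One step is misstated and would need repair. You claim that the vanishing of $(f_\gamma|\varphi_\delta)$ for paths through differing intermediate vertices is ``automatic from the grading of $\intw$.'' The grading separates lengths, the orbit sequences $\underline i$, and the endpoint representations $U_0,U_n$, but two paths with the same $\underline i$, the same endpoints, and different intermediate irreducibles $U_t$ lie in the \emph{same} graded component, and the factorisation $\prod_t(f_t|g_t^\vee)$ cannot even be invoked there, since the one-step compatibility requires the intermediate module of $f_2\circledast f_1$ to match that of $\varphi_1\circledast\varphi_2$. The vanishing is nevertheless true and comes out of the very computation you propose: in the categorical form, the pairing of the composite against the composite produces at the intermediate slot a morphism in $\cc(U_t,U_t')$ (namely $\overline{\phi_2}\circ f_2$ in the notation of Remark~\ref{sec:defin-main-results-2}), which is zero by Schur's lemma when $U_t\not\simeq U_t'$. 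With that correction your argument closes.
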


It is now possible to explain how to decompose any element of
$\tilde e \cdot (T_S(M)*G)\cdot \tilde e$ along the paths in $Q_G$
\emph{via} the isomorphism
${\mathbbm{k}} Q_G \xrightarrow{~(\ref{eq:27})} \tilde e (T_S(M)*G)\cdot \tilde
e$.
This is done with the following construction and assuming that
$U= {\mathbbm{k}} G_i \cdot {\varepsilon}_U$ for all vertices $(i,U)$ of $Q_G$. Given a
tensor
$m_1\otimes \cdots \otimes m_n *g \in e_{i_0}\cdot
(M^{\otimes_Sn}*G)\cdot e_{i_n}$,
where $n\geqslant 0$ and $i_0,i_n\in [G\backslash Q_0]$, then
\[
m_1\otimes \cdots \otimes m_n *g \in \left( \,_{i_0}M_{y_1\cdot i_1}
  \otimes_{{\mathbbm{k}}} \cdots \otimes_{{\mathbbm{k}}} \,_{(y_1\cdots y_{n-1})\cdot i_{n-1}}
  M _{(y_1\cdots y_{n})\cdot i_{n}} \right) * y_1\cdots y_n{\mathbbm{k}} G_{i_n}
\]
for unique $i_1,\ldots,i_{n-1}\in [G\backslash Q_0]$ and
$y_t\in [G/G_{i_t}]$, for all $t\in \{1,\ldots,n\}$; this defines an
intertwiner, for all $U\in \irr(G_{i_0})$ and $V\in \irr(G_{i_n})$,
\[
\begin{array}{ccc}
  {\mathbbm{k}} G_{i_0}\cdot {\varepsilon}_U & \longrightarrow
  &
    M(i_0,\ldots,i_n;{\mathbbm{k}} G_{i_n}\cdot {\varepsilon}_V) \\
  u \cdot {\varepsilon}_U & \longmapsto &
    (u*{\varepsilon}_U) \cdot (m_1\otimes\cdots\otimes m_n *g) \cdot
    (e_{i_n} * {\varepsilon}_V)\,,
\end{array}
\]
where the products on the right-hand side are meant in $T_S(M)*G$;
summing these intertwiners over all $U\in \irr(G_{i_0})$ and
$V\in \irr(G_{i_n})$ defines an element of $\intw$. This construction
may be extended linearly to a linear mapping
\[
\hat{e} \cdot (T_S(M)*G)\cdot \hat{e} \longrightarrow \intw^{\mathrm{op}}\,,
\]
where $\hat{e} = \sum_{i\in [G\backslash Q_0]}e_i$.  Denote by
  $\Xi$ the restriction of this mapping to
  $\tilde e\cdot (T_S(M)*G)\cdot \tilde e$,
\begin{equation}
  \label{eq:43}
  \Xi \colon \tilde e\cdot (T_S(M)*G) \cdot \tilde e \longrightarrow \intw^{\mathrm{op}}\,.
\end{equation}

\begin{rem}
  \label{sec:defin-main-results-7}
  Here is how $\hat{e}$ and $\tilde{e}$ are related. Given idempotents
  $a$ and $b$ of an algebra $\Lambda$, say that $b$ is a
  \emph{reduction} of $a$ whenever $\Lambda b$ is a direct summand of
  $\Lambda a$ in $\mathrm{Mod}(\Lambda)$ and the algebras
  $\mathrm{End}_{\Lambda}(\Lambda b)$ and
  $\mathrm{End}_{\Lambda}(\Lambda a)$ are Morita equivalent. Now
  $\tilde e$
  ($=\sum_{i\in [G\backslash Q_0]} e_i *(\sum_{U\in \irr(G_i)}{\varepsilon}_U)$)
  is a reduction of $\hat e * 1_{{\mathbbm{k}} G}$
  ($=\sum_{i\in [G\backslash Q_0]} e_i*1_{{\mathbbm{k}} G_i}$) in the algebra
  $T_S(M)*G$.
\end{rem}

\begin{thm}
  \label{sec:defin-main-results-4}
  Let $Q_0$, $S$, $G$, and $M$ be as in
  Setting~\ref{sec:defin-notat-main-2}. Let $Q_G$ be as in
  Setting~\ref{sec:defin-notat-main-10}. Assume that
  $U= {\mathbbm{k}} G_i \cdot {\varepsilon}_U$ for all vertices $(i,U)$ of $Q_G$. Then,
  $\Xi$, as introduced in (\ref{eq:43}), is an isomorphism of
  ${\mathbbm{k}}$-algebras such that the following diagram is commutative,
  \begin{equation}
    \label{eq:45}
  \xymatrix@R=2ex@C=10ex{
    & \tilde e \cdot(T_S(M)*G)\cdot \tilde e \ar[dd]^\Xi \\
    {\mathbbm{k}} Q_G \ar[ru]^{~(\ref{eq:27})} \ar[rd]_{~(\ref{eq:32})} \\
    & \intw^{\mathrm{op}}\,.
  }
\end{equation}
  In particular, if (\ref{eq:27}) is used as an identification, then,
  for all $\theta\in \tilde e\cdot (T_S(M)*G)\cdot \tilde e$,
  \begin{equation}
    \label{eq:44}
    \theta = \sum_{\text{$\gamma$ path in $Q_G$}}
    (\Xi(\theta)|\varphi_\gamma)\cdot \gamma\,.
\end{equation}
\end{thm}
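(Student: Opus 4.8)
The plan is to realise $\Xi$ as the inverse of an evaluation map and to read off everything from the two isomorphisms~(\ref{eq:27}) and~(\ref{eq:32}). Concretely, I would introduce the auxiliary $\k$-linear map
\[
\Phi\colon \intw^\op \longrightarrow \tilde e\cdot (T_S(M)*G)\cdot \tilde e,\qquad f\longmapsto f(\e_U),
\]
defined on the homogeneous component $\Hom_{\k G_{i_0}}(U,M(\underline i;U_n))$ of $\intw$, where $U=\k G_{i_0}\cdot\e_U$, by evaluation at $\e_U$ and extended linearly. First I would check that $\Phi$ takes its values in the corner algebra: by the remark following Definition~\ref{sec:defin-notat-main-4} each such $f$ takes its values in $T_S(M)*G$, and since $\e_U\cdot f(\e_U)=f(\e_U)$ together with the fact that $M(\underline i;U_n)$, with $U_n=\k G_{i_n}\cdot \e_{U_n}$, is fixed on the right by $e_{i_n}*\e_{U_n}$ and on the left by $e_{i_0}$, one gets $f(\e_U)=(e_{i_0}*\e_U)\cdot f(\e_U)\cdot (e_{i_n}*\e_{U_n})$. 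Next I would verify that $\Phi$ is a homomorphism of algebras: the unit $\sum_{(i,U)}\id_U$ of $\intw$ is sent to $\sum_{(i,U)}e_i*\e_U=\tilde e$ by the case $n=0$, while the identity~(\ref{eq:12}) gives $\Phi(f'\circledast f)=f(\e_U)\cdot f'(\e_V)=\Phi(f)\cdot \Phi(f')$, so that $\Phi$ turns the product $\circledast$ into the opposite product of the corner algebra, which is exactly why the source is $\intw^\op$.

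Second, I would identify $\Phi$ with $(\ref{eq:27})\circ (\ref{eq:32})^{-1}$. Both $\Phi\circ(\ref{eq:32})$ and~(\ref{eq:27}) are algebra homomorphisms $\k Q_G\to \tilde e\cdot (T_S(M)*G)\cdot \tilde e$, so it is enough to compare them on the vertices and arrows of $Q_G$. On a vertex $(i,U)$ both send the trivial path to $e_i*\e_U$, using that $f_\gamma=\id_U$ for the trivial path and $\Phi(\id_U)=\e_U$. On an arrow $f$ one has $f_\gamma=f$, hence $\Phi\bigl((\ref{eq:32})(f)\bigr)=\Phi(f)=f(\e_U)$, which is the image of $f$ under~(\ref{eq:27}) by Theorem~\ref{sec:proof-main-results-1}. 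Therefore $\Phi\circ(\ref{eq:32})=(\ref{eq:27})$; since~(\ref{eq:27}) and~(\ref{eq:32}) are isomorphisms, so is $\Phi$, and $\Phi=(\ref{eq:27})\circ(\ref{eq:32})^{-1}$.

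Third, I would compute $\Phi\circ \Xi$ directly from the definition~(\ref{eq:43}) of $\Xi$. For $\theta$ in the corner, the component of $\Xi(\theta)$ attached to endpoints $(i_0,U)$ and $(i_n,V)$ sends $\e_U$ to $(e_{i_0}*\e_U)\cdot\theta\cdot (e_{i_n}*\e_V)$, so applying $\Phi$ and summing over all endpoints yields $\sum_{(i_0,U),(i_n,V)}(e_{i_0}*\e_U)\cdot\theta\cdot (e_{i_n}*\e_V)=\tilde e\cdot\theta\cdot\tilde e=\theta$, thanks to the equality $\tilde e=\sum_{(i,U)}e_i*\e_U$. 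As $\Phi$ is already known to be bijective, this forces $\Xi=\Phi^{-1}$; hence $\Xi$ is an isomorphism of $\k$-algebras and $\Xi=(\ref{eq:32})\circ(\ref{eq:27})^{-1}$, which is precisely the commutativity of~(\ref{eq:45}).

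Finally, (\ref{eq:44}) follows by transporting the decomposition~(\ref{eq:28}) of Theorem~\ref{sec:defin-main-results} through these isomorphisms. Applying~(\ref{eq:28}) componentwise to $\Xi(\theta)\in\intw^\op$ gives $\Xi(\theta)=\sum_\gamma (\Xi(\theta)|\varphi_\gamma)\,f_\gamma$, and applying $(\ref{eq:32})^{-1}$, which sends each $f_\gamma$ to $\gamma$ and satisfies $(\ref{eq:32})^{-1}\circ\Xi=(\ref{eq:27})^{-1}$, turns this into $(\ref{eq:27})^{-1}(\theta)=\sum_\gamma (\Xi(\theta)|\varphi_\gamma)\,\gamma$; reading this under the identification~(\ref{eq:27}) yields~(\ref{eq:44}). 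The only genuinely delicate points in this scheme are the well-definedness of $\Phi$ and $\Xi$ into and out of the corner algebra, which rests on the explicit orbit decomposition of tensors recalled before the statement, and the multiplicativity of $\Phi$, which however is handed to us by~(\ref{eq:12}); the remaining steps are bookkeeping with the idempotents $e_i*\e_U$.
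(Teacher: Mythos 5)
Your proposal is correct and follows essentially the same route as the paper: the inverse of $\Xi$ is the evaluation map $f\mapsto f(\e_U)$, multiplicativity is extracted from (\ref{eq:12}), commutativity of (\ref{eq:45}) is checked on the arrows of $Q_G$, and (\ref{eq:44}) is obtained by transporting (\ref{eq:28}) through the isomorphisms. The only organizational difference is that the paper establishes bijectivity of $\Xi$ directly from the orbit decomposition of $\hat e\cdot (T_S(M)*G)\cdot \hat e$ together with the bijection $\Hom_{\k G_i}(U,N\cdot \e_V)\to \e_U\cdot N\cdot \e_V$, $f\mapsto f(\e_U)$, whereas you deduce it from the previously proved isomorphisms (\ref{eq:27}) and (\ref{eq:32}) combined with the computation $\Phi\circ\Xi=\id$; both arguments are valid.
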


\begin{ex}
  \label{sec:defin-main-results-15}
  This is the continuation of
  Example~\ref{sec:defin-main-results-14}. Denote by $w$ the path
  $x_{\bullet,\overline{0}}x_{\overline{0},\overline{1}}x_{\overline{1},\bullet}$
  in $Q$, this is an oriented cycle of length three going clockwise in
  the picture of $Q$ in Example~\ref{sec:defin-main-results-8}. The
  present example describes the intertwiner $\Xi(\tilde e \cdot
  (w*\mathrm{Id}) \cdot \tilde e)$ as well as its preimage under the
  isomorphism ${\mathbbm{k}} Q_G \xrightarrow{(\ref{eq:32})}\intw^{{\mathrm{op}}}$.

  Because of the definition,
  \[
    \Xi(\tilde e \cdot (w*\mathrm{Id}) \cdot \tilde e) = \sum_{s,t\in
      \mathbb Z/2\mathbb Z} \Xi(\tilde e \cdot (w*\mathrm{Id}) \cdot
    \tilde e)_{s,t}\,,
  \]
  where, for all $s,t\in \mathbb Z/2\mathbb Z$, the piece of notation
  $\Xi(\tilde e \cdot (w*\mathrm{Id}) \cdot \tilde e)_{s,t}$ stands
  for the intertwiner
  $\rho_s \to M(\bullet,\overline{0},\overline{0},\bullet; \rho_t)$
  which maps ${\varepsilon}_s$ to
  $(e_\bullet\otimes {\varepsilon}_s)\cdot \tilde e \cdot (w\otimes \mathrm{Id})
  \cdot \tilde e \cdot (e_\bullet \otimes {\varepsilon}_t)$. For all
  $s,t\in \mathbb Z/2\mathbb Z$, the latter expression is equal to
  $(1_{{\mathbbm{k}} Q}\otimes {\varepsilon}_s)\cdot (w \otimes {\varepsilon}_t)$, which in turn is
  equal to
  $\frac{1}{2}(x_{\bullet,\overline{0}}x_{\overline{0},\overline{1}}x_{\overline{1},\bullet}
  \otimes {\varepsilon}_t +(-1)^{s+\overline{1}+t}
  x_{\bullet,\overline{1}}x_{\overline{1},\overline{0}}x_{\overline{0},\bullet}
  \otimes {\varepsilon}_t)$; therefore, using the notation of
  Example~\ref{sec:defin-main-results-11},
  \[
    \Xi(\tilde e \cdot (w*\mathrm{Id}) \cdot \tilde
    e)_{s,t}=F_{\overline{0}}\,.
  \]

  Now, let $s,t,a\in \mathbb Z/2\mathbb Z$; using the computation of
  the pairings of the shape $(F_a|\phi_b)$ made in
  Example~\ref{sec:defin-main-results-13} as well as the description
  of the intertwiners relative to $M^*$ associated to paths in $Q_G$
  made in Example~\ref{sec:defin-main-results-14}, and denoting by
  $\gamma$ the path
  $(\bullet,\rho_s) \xrightarrow{b_s} (\overline{0},{\mathbbm{k}}\cdot
  \mathrm{Id}) \xrightarrow{d_a} (\overline{0},{\mathbbm{k}}\cdot \mathrm{Id})
  \xrightarrow{c_t} (\bullet,\rho_t)$ of $Q_G$,
  \[
    (\Xi(\tilde e(w\otimes \mathrm{Id})\cdot \tilde e)_{s,t}
    |
    \varphi_\gamma) =
    \left\{
      \begin{array}{ll}
        \frac{1+(-1)^t}{2} & \text{if $a=\overline{0}$} \\
        0 & \text{if not;}
      \end{array}
      \right.
    \]
    Therefore
    $\Xi(\tilde e\cdot (w\otimes \mathrm{Id})\cdot \tilde e)_{s,t} =
    \frac{1+(-1)^t}{2}\cdot b_sd_{\overline{0}}c_t$
    (see~(\ref{eq:44})). As a whole,
    $\Xi(\tilde e\cdot (w\otimes \mathrm{Id})\cdot \tilde e) =
    b_{\overline{0}}d_{\overline{0}}c_{\overline{0}} +
    b_{\overline{1}}d_{\overline{0}}c_{\overline{0}}$.
\end{ex}
\section{Practical aspects}
\label{sec:examples}

Let $Q_0$, $S$, $G$ and $M$ be as in
Setting~\ref{sec:defin-notat-main-2}.

Applying Theorem~\ref{sec:defin-main-results-4} for computations
requires to compute $Q_G$ first. Actually, rather than computing
explicitly the intertwiners relative to $M$ which form the arrows of
$Q_G$, it is simpler to
\begin{enumerate}
\item compute a basis of $\Hom_{{\mathbbm{k}} G_j}(\tau,M^*(j,i;\rho))$ for all
  pairs $(i,\rho)$ and $(j,\tau)$ such that $i,j\in [G\backslash Q_0]$,
  $\rho\in \irr(G_i)$, and $\tau \in \irr(G_j)$.
\end{enumerate}
Because of the pairing $(-|-)$, this yields a quiver $Q_G$ such as in
Setting~\ref{sec:defin-notat-main-10} as well as the intertwiners
$\varphi_\alpha$ for all arrows $\alpha$ of $Q_G$. This avoids
computing explicitly the intertwiners corresponding to the arrows of
$Q_G$, which are not used in the formulas of
Theorem~\ref{sec:defin-main-results-4}.

Given an element $\theta$ of
$\tilde e \cdot (T_S(M)*G)\cdot \tilde e$, say homogeneous of degree
$n$ for the grading induced by the tensor powers of $M$, the
decomposition into a linear combination of paths in $Q_G$ of the
inverse image of $\theta$ under the isomorphism
${\mathbbm{k}} Q_G \xrightarrow{~\eqref{eq:27}} \tilde e \cdot (T_S(M)*G) \cdot
\tilde e$
may be computed as follows using
Theorem~\ref{sec:defin-main-results-4},
\begin{enumerate}
  \setcounter{enumi}{1}
\item compute $\Xi(\theta)$, which is a sum of intertwiners relative
  to $M$, see~\eqref{eq:43};
\item compute the intertwiners $\varphi_\gamma$ for all paths $\gamma$
  of length $n$ in $Q_G$, see Notation~\ref{sec:defin-main-results-1};
\item compute the pairing $(\Xi(\theta)|\varphi_\gamma)$ for all such
  paths $\gamma$, see Proposition~\ref{sec:defin-notat-main-13}.
\end{enumerate}
The desired linear combination of paths is hence
$\sum_{\gamma}(\Xi(\theta)|\varphi_\gamma) \cdot \gamma$ (see
\eqref{eq:44}). 

These computations may be performed by a computer that is able to
determine the irreducible representations of $\irr(G_i)$ for all
$i\in [G\backslash Q_0]$. This section illustrates these computations.

\subsection{How to pair intertwiners?}
\label{sec:how-pair-intertw}

In view of \eqref{eq:12}, the main difficulty in the computations
listed in the introduction of Section~\ref{sec:examples} lies in
computing pairings for $(-|-)$. This section explains how to compute
\eqref{eq:29} in combinatorial terms in the following specific
setting.

\begin{set}
  \label{sec:how-pair-intertw-1}
  Let $Q_0$, $S$, $G$ and $M$ be as in
  Setting~\ref{sec:defin-notat-main-2}. Let $Q$ be a quiver with
  vertex set being $Q_0$ and such that $M$ is equal to the vector
  space with basis elements being the arrows of $Q$. Let $Q_G$ be as
  in Setting~\ref{sec:defin-notat-main-10}.  Assume that
\begin{itemize}
\item the stabiliser $G_i$ is abelian for all $i\in Q_0$,
\item $U = {\mathbbm{k}} G_i \cdot {\varepsilon}_U$ for all vertices $(i,U)$ of $Q_G$, denote
  by $\chi_U$ the character $G_i\to {\mathbbm{k}}^\times$ of $U$,
\item and, for all $g\in G$ and all paths $\gamma$ in $Q$, there
  exists a scalar $\chi_{g,\gamma}\in {\mathbbm{k}}^\times$ and a path
  $g(\gamma)$ such that $^g\gamma = \chi_{g,\gamma}\,g(\gamma)$.
\end{itemize}
\end{set}

If $G$ is abelian, then it is possible to find a quiver $Q$ and to
choose irreducible representations $U$ of $G_i$, for all $i\in
[G\backslash Q_0]$, fitting in this setting.

\begin{notation}
  \label{sec:how-pair-intertw-3}
  The following notation relative to $M$ is useful.
  \begin{itemize}
  \item For all arrows $a\colon i\to j$ of $Q$, denote by $a^*$ the
    element of $M^*$ such that $a^*(a)=1$ and $a^*(b) = 0$ for all
    arrows $b$ of $Q$ distinct from $a$. 
  \item Denote by $Q^*$ the quiver with vertex set being $Q_0$ and whose
    arrows are the linear forms $a^*$ introduced just before. This
    quiver is isomorphic to the opposite quiver of $Q$.
  \item For all paths $\gamma \colon t_0 \xrightarrow{a_1} t_1 \to
    \cdots \to t_{n-1} \xrightarrow{a_n} t_n$ in $Q$, denote by
    $\gamma^*$ the path $t_n \xrightarrow{a_n^*} t_{n-1}\to
    \cdots \to t_1 \xrightarrow{a_1^*} t_0$ in $Q^*$.
    \end{itemize}
\end{notation}

Note that every path in $Q$ starting in some vertex $i_0$ lying in
$[G\backslash Q_0]$ is of the shape $i_0\to y_1\cdot i_1 \to \cdots
\to (y_1\cdots y_n)\cdot i_n$ for a unique sequence $i_1,\ldots,i_n$
in $[G\backslash Q_0]$ and a unique sequence $y_1,\ldots,y_n$ in $G$
such that $y_t\in [G/G_{i_t}]$ for all $t\in \{1,\ldots,n\}$.

Let $\underline i=i_0,\ldots,i_n$ be a sequence in
$[G\backslash Q_0]$, consider $U\in \irr(G_{i_0})$ and
$V\in \irr(G_{i_n})$, and let $f \colon U \to M(\underline i;V)$ and
$\varphi \colon V \to M^*(\underline i^o;U)$ be intertwiners. Since
the representation $V$ is one dimensional, then,
\begin{equation}
  \label{eq:49}
  f({\varepsilon}_u) = \sum_{\gamma}\alpha_\gamma \gamma \otimes y_1 y_2\cdots y_n{\varepsilon}_V\,,
\end{equation}
where $\gamma$ runs through all paths in $Q$ of the shape
\[
i_0 \to y_1 \cdot i_1 \to y_1y_2 \cdot i_2 \to \cdots \to (y_1\cdots
y_n) \cdot i_n
\]
for some (unique) sequence $\underline y = y_1,\ldots,y_n$ such that
$y_t\in [G/G_{i_t}]$ for all $t\in \{1,\ldots,n\}$, and
$\alpha_\gamma \in {\mathbbm{k}}$.  Note that the considered paths are precisely
the ones of the shape $i_0\to j_1\to \cdots \to j_n$ such that $j_t$
lies in the orbit of $i_t$ for all $t \in \{1,\ldots,n\}$. Similarly,
\begin{equation}
  \label{eq:50}
  \varphi({\varepsilon}_V) = \sum_{\gamma'} \beta_{\gamma'} \gamma' \otimes
  x_{n-1}\cdots x_0{\varepsilon}_U\,, 
\end{equation}
where $\gamma'$ runs through all paths in $Q$ of the shape
\[
i_n \to x_{n-1} \cdot i_{n-1} \to x_{n-1}x_{n-2} \cdot i_{n-2} \to
\cdots \to (x_{n-1} x_{n-2}\cdots x_0) \cdot i_0
\]
for some (unique) sequence $\underline x = x_{n-1},\ldots,x_0$
such that $x_t\in [G/G_{i_t}]$ for all $t\in \{0,\ldots,n-1\}$, and
$\beta_{\gamma'} \in {\mathbbm{k}}$.  

\begin{lem}
  \label{sec:how-pair-intertw-2}
  Using the decompositions \eqref{eq:49} and \eqref{eq:50}, then
  \begin{equation}
    \label{eq:51}
    (f | \varphi) = \sum_{\gamma}
    \alpha_\gamma \cdot \beta_{(y_1\cdots y_n)^{-1}(\gamma)^*} \cdot
    \chi_{(y_1\cdots y_n)^{-1},\gamma} \cdot \chi_U(h_0)\,, 
  \end{equation}
  where $\gamma$ runs through all paths in $Q$ of the shape
  $i_0 \to j_1 \to \cdots \to j_n$ such that $j_t$ lies in the orbit
  of $i_t$ for all $t\in \{1,\ldots,n\}$, where
  $\alpha_\gamma \in {\mathbbm{k}}$, and where, for all such paths $\gamma$,
    \begin{itemize}
    \item $\underline y = y_1,\ldots,y_n$ denotes the sequence in $G$
      such that $y_t\in [G/G_{i_t}]$, for all $t\in \{1,\ldots,n\}$
      and $\gamma$ is a path of the shape
      $i_0\to y_1\cdot i_1 \to \cdots \to y_1\cdots y_n \cdot i_n$,
    \item $h_0$ is the element of $G_{i_0}$ such as in (c) in
      Proposition~\ref{sec:defin-notat-main-13},
    \item $\chi_{(y_1\cdots y_n)^{-1},\gamma}$ is the element of
      ${\mathbbm{k}}^\times$ and $(y_1\cdots y_n)^{-1}(\gamma)$ is the path in
      $Q$ such that $^{(y_1\cdots y_n)^{-1}}\gamma = \chi_{(y_1\cdots
        y_n)^{-1},\gamma} \,(y_1\cdots y_n)^{-1}(\gamma)$,
    \item and $((y_1\cdots y_n)^{-1}(\gamma))^*$ is the reverse path
      in $Q^*$ associated to the path $(y_1\cdots y_n)^{-1}(\gamma)$
      in $Q$.
    \end{itemize}
\end{lem}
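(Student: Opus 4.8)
The plan is to specialise the general pairing formula (\ref{eq:29}) of Proposition~\ref{sec:defin-notat-main-13} to the present combinatorial setting, evaluate it at $u=\e_U$, and read off the scalar $(f|\varphi)$ from the resulting identity in $U=\k\e_U$. Since $U$ is one dimensional, (\ref{eq:29}) with $u=\e_U$ reads $(f|\varphi)\,\e_U = \sum_{\underline y}\prod_{t=1}^n \varphi^{\underline x}_{(t)}(f^{(0)}_{\underline y}(\e_U))(\,^{(y_1\cdots y_n)^{-1}}f^{(t)}_{\underline y}(\e_U))\;{}^{h_0}(\varphi^{\underline x}_{(0)}(f^{(0)}_{\underline y}(\e_U)))$, with $\underline x$ and $h_0$ determined by $\underline y$ as in (a)--(c). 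The first task is to re-express the symbolic tensors $f^{(t)}_{\underline y}(\e_U)$ and $\varphi^{\underline x}_{(t)}(\e_V)$ through the path expansions (\ref{eq:49}) and (\ref{eq:50}): by Notation~\ref{sec:defin-notat-main-5} the summand $f_{\underline y}(\e_U)$ is, with the sum sign omitted, exactly the sum over the paths $\gamma$ of $Q$ with $\underline y(\gamma)=\underline y$ of the pure tensors $\alpha_\gamma(a_1\otimes\cdots\otimes a_n)\otimes y_1\cdots y_n\e_V$, where $a_1,\ldots,a_n$ are the arrows of $\gamma$; hence $f^{(0)}_{\underline y}(\e_U)$ is a scalar multiple of $\e_V$ while the $f^{(t)}_{\underline y}(\e_U)$ record the arrows. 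The same reading applies to $\varphi$ via (\ref{eq:50}), now over paths $\gamma'$ of $Q^*$.

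Next I would expand the product over $t$ using this path description. The key point is that, because the arrows of $Q$ form a basis of $M$ and the $a^*$ form the dual basis of $M^*$, each factor $\varphi^{\underline x}_{(t)}(\e_V)(\,^{g}f^{(t)}_{\underline y}(\e_U))$, with $g=(y_1\cdots y_n)^{-1}$, is a pairing of a linear combination of arrows of $Q^*$ against the twisted arrow $^{g}a_t$. By the hypothesis of Setting~\ref{sec:how-pair-intertw-1}, $^{g}\gamma = \chi_{g,\gamma}\,g(\gamma)$, so $^{g}a_t$ is a nonzero scalar multiple of the $t$-th arrow of the path $g(\gamma)$, the scalars multiplying over $t$ to $\chi_{g,\gamma}$. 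The duality $a^*(b)=\delta_{a,b}$ then forces all cross terms to vanish: the only $\gamma'$ contributing is the reverse path $(g(\gamma))^*$ in $Q^*$, and it contributes its coefficient $\beta_{(g(\gamma))^*}$ times $\chi_{g,\gamma}$. I would check, using the ``useful explanation'' following Proposition~\ref{sec:defin-notat-main-13}, that the sequence $\underline x$ attached to $(g(\gamma))^*$ in (\ref{eq:50}) is precisely the one determined by conditions (a)--(c) from $\underline y=\underline y(\gamma)$, so that the $\underline x$-graded components match; this agreement is what keeps the surviving term nonzero and is the heart of the index bookkeeping.

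Finally I would treat the last factor $^{h_0}(\varphi^{\underline x}_{(0)}(f^{(0)}_{\underline y}(\e_U)))$. With $f^{(0)}_{\underline y}(\e_U)$ a multiple of $\e_V$ and $\varphi^{\underline x}_{(0)}(\e_V)=\e_U$ for the relevant $\gamma'=(g(\gamma))^*$, and since $U$ affords the character $\chi_U$ of the abelian group $G_{i_0}$ with $h_0\in G_{i_0}$, this factor equals $\chi_U(h_0)\,\e_U$. Collecting scalars, the contribution of each path $\gamma$ is $\alpha_\gamma\,\beta_{(g(\gamma))^*}\,\chi_{g,\gamma}\,\chi_U(h_0)\,\e_U$; since summing over $\underline y$ together with the omitted inner sum is the same as summing over all paths $\gamma$ of $Q$ with source $i_0$ and length $n$, comparing coefficients of $\e_U$ yields (\ref{eq:51}). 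The main obstacle I anticipate is not any single estimate but the careful matching of the group-theoretic data: verifying that the $\underline x$ and $h_0$ produced by (a)--(c) agree with the combinatorial reverse-path description, and tracking where each scalar ($\alpha_\gamma$, $\beta_{\gamma'}$, the twisting scalar $\chi_{g,\gamma}$, and $\chi_U(h_0)$) is absorbed when the symbolic tensors are expanded, so that no factor is double-counted.
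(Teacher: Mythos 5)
Your proposal is correct and follows essentially the same route as the paper's proof: specialise \eqref{eq:29} at $u=\e_U$, expand $f_{\underline y}(\e_U)$ and $\varphi^{\underline x}(\e_V)$ along paths via \eqref{eq:49} and \eqref{eq:50}, use the duality $a^*(b)=\delta_{a,b}$ to kill all cross terms except $\gamma'=((y_1\cdots y_n)^{-1}(\gamma))^*$, and collect the scalars $\alpha_\gamma$, $\beta_{\gamma'}$, $\chi_{(y_1\cdots y_n)^{-1},\gamma}$, and $\chi_U(h_0)$. The index-matching issue you flag (that the $\underline x$ attached to $(g(\gamma))^*$ agrees with the one from conditions (a)--(c)) is exactly the point the paper handles via the remark following Proposition~\ref{sec:defin-notat-main-13}.
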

\begin{proof}
  For all sequences $\underline y=y_1,\ldots,y_n$ and
  $\underline x = x_{n-1},\ldots,x_0$ such that $x_t\in [G/G_{i_t}]$
  and $y_t\in [G/G_{i_t}]$ for all $t$, denote by ${\mathcal C}(\underline y)$
  and ${\mathcal C}'(\underline x)$ the sets of paths in $Q$ and in $Q^*$ of
  the shape
  $i_0\to y_1\cdot i_1\to \cdots \to y_1\cdots y_n \cdot i_n$ and
  $i_n\to x_{n-1} \cdot i_{n-1} \to \cdots \to x_{n-1}\cdots x_0\cdot
  i_0$, respectively.  For all $\underline y$,
  \[
  f_{\underline y}({\varepsilon}_U) = \sum_{\gamma \in {\mathcal C}(\underline y)}
  \alpha_\gamma \gamma \otimes y_1\cdots y_n {\varepsilon}_U\,;
  \]
  hence, the following term that serves in the definition of
  $(f|\varphi)$, see (\ref{eq:29}),
  \[
  ^{(y_1\cdots y_n)^{-1}}f_{\underline y}^{(1)}({\varepsilon}_U) \otimes \cdots
  \otimes \,^{(y_1\cdots y_n)^{-1}}f_{\underline y}^{(n)}({\varepsilon}_U)
  \otimes f_{\underline y}^{(0)}({\varepsilon}_U)\,,
  \]
  is equal to
  \[
  \sum_{\gamma \in {\mathcal C}(\underline y)} \alpha_\gamma \cdot \chi_{(y_1\cdots
    y_n)^{-1},\gamma} \cdot \underset{\in M^{\otimes_S
      n}}{\underbrace{(y_1\cdots y_n)^{-1}(\gamma)}} \otimes {\varepsilon}_U\,.
  \]
  Similarly, for all $\underline x$,
  \[
  \varphi^{\underline x}({\varepsilon}_V) = \sum_{\gamma'\in {\mathcal C}(\underline x)}
  \beta_{\gamma'} \gamma' \otimes x_{n-1}\cdots x_0{\varepsilon}_U\,.
  \]

  Therefore, given $\underline y$, the ``$\Pi_t$''-term in
  \eqref{eq:29} is equal to
  \begin{equation}
    \label{eq:52}
  \sum_{\gamma \in {\mathcal C}(\underline y),\,\gamma'\in {\mathcal C}(\underline x)}
  \alpha_{\gamma} \cdot \beta_{\gamma'}\, \chi_{(y_1\cdots
    y_n)^{-1},\gamma} \cdot
  b_n^*(c_n)\cdots b_2^*(c_2)\cdot b_1^*(c_1) \cdot\chi_U(h_0) \cdot{\varepsilon}_U\,,
\end{equation}
  where
  \begin{itemize}
  \item $\underline x = x_{n-1},\ldots,x_0$ and $h_0$ are the elements
    of $G$ determined by $\underline y$ and by (a), (b), and (c) in
    Proposition~\ref{sec:defin-notat-main-13},
  \item $b_1^*,\ldots,b_n^*$ are the arrows of $Q^*$ such that
    $\gamma'$ is
    $\cdot \xrightarrow{b_n^*}\cdot \xrightarrow{b_{n-1}^*}\cdots
    \xrightarrow{b_1^*}\cdot$,
  \item and $c_1,\ldots,c_n$ are the arrows of $Q$ such that
    $(y_1\cdots y_n)^{-1}(\gamma)$
    is $\cdot \xrightarrow{c_1}\cdot \xrightarrow{c_2}\cdots
    \xrightarrow{c_n}\cdot$.
  \end{itemize}
  By definition of the arrows $b_1^*,\ldots,b_n^*$, the summand of
  \eqref{eq:52} with index $\gamma$ is non-zero only if the paths
  $\cdot \xrightarrow{b_1}\cdot\xrightarrow{b_2} \cdots
  \xrightarrow{b_n}\cdot$
  and $(y_1\cdots y_n)^{-1}(\gamma)$ in $Q$ are equal, that is, only
  if $\gamma' = ((y_1\cdots y_n)^{-1}(\gamma))^*$.
  Summing~\eqref{eq:52} over all possible $\underline y$ yields that
  $(f |\varphi){\varepsilon}_U$ is equal to
  \[
  \sum_\gamma \alpha_\gamma \beta_{((y_1\cdots y_n)^{-1}(\gamma))^*}
  \chi_{(y_1\cdots y_n)^{-1},\gamma}\chi_U(h_0){\varepsilon}_U\,.
  \]
  This proves \eqref{eq:51}.
\end{proof}
\subsection{An example}
\label{sec:an-example}

In this example, $Q$ is the following quiver whose vertices are the
elements of $\mathbb Z/4\mathbb Z$
\begin{equation}
  \label{eq:54}
  \xymatrix{ & \overline{0} \ar@<2pt>[rd] \ar@<2pt>[ld] &
    \\
    \overline{4} \ar@<2pt>[ru] \ar@<2pt>[d] & & \overline{1}
    \ar@<2pt>[d] \ar@<2pt>[lu]
    \\
    \overline{3} \ar@<2pt>[u] \ar@<2pt>[rr] & & \overline{2}
    \ar@<2pt>[ll] \ar@<2pt>[u] }
\end{equation}
and $G$ is the dihedral group of order $10$
\[
G = \langle c,\,\tau\ |\ c^5\,, \tau^2 \,, \tau c\tau c\rangle\,.
\]
Denote by ${\varepsilon}$ the group homomorphism $G \to \{-1,1\}$ such that
${\varepsilon}(c)=1$ and ${\varepsilon}(\tau)=-1$. Denote the arrows of $Q$ by
$x_{i,j}\colon i \to j$. For convenience, denote the arrows of $Q^*$
by $x'_{i,j} \colon i\to j$, hence $(x_{j,i})^* = x'_{i,j}$. The
actions of $G$ on $Q_0$ and $M$ are assumed to be the ones such that,
for all $i\in Q_0$ and $g\in G$, and for all arrows $x_{i,j}$ of $Q$,
\[
c\cdot i = i+\overline{1},\ \tau \cdot i = -i,\ \,^gx_{i,j} =
{\varepsilon}(g) x_{g\cdot i,g\cdot j}\,.
\]
The purpose of this section is to make a full illustration of
Theorem~\ref{sec:defin-main-results-4}, that is, for all
$w\in T_S(M)*G$,
\begin{itemize}
\item to describe the intertwiner $\Xi(\theta)$, where $\theta$
  denotes $\tilde e \cdot w \cdot \tilde e$;
\item and to describe the preimage of $\theta$ under the isomorphism
  ${\mathbbm{k}} Q_G \xrightarrow{~\eqref{eq:27}} \tilde e \cdot (T_S(M)*G)\cdot
  \tilde e$.
\end{itemize}

Let $[G\backslash Q_0]$ be $\{\overline{0}\}$. Note that $G_{\overline{0}} =
\{\id,\tau\}$. Let $[G/G_{\overline{0}}]$ be $\{\id,c,c^2,c^3,c^4\}$. For all $s\in
\{0,1\}$, denote by ${\varepsilon}_s$ the following primitive idempotent of ${\mathbbm{k}}
G_{\overline{0}}$,
\[
{\varepsilon}_s = \frac{1}{2}(\id +(-1)^s\tau)
\]
and denote ${\varepsilon}_s\cdot {\mathbbm{k}} G_{\overline{0}}$ by $\rho_s$. Finally, let $\irr(G_{\overline{0}})$ be
equal to $\{\rho_0,\rho_1\}$.  In particular,
\[
\tilde e = e_{\overline{0}}*{\varepsilon}_0 + e_{\overline{0}}*{\varepsilon}_1\,.
\]

The setting presented below is assumed until the end of the
subsection. In view of the purpose stated previously, this is not a
loss of generality in making this assumption because
\begin{itemize}
\item the elements of the shape $p*g$, where $p$ is a path in $Q$ and
  $g\in G$, form a basis of $T_S(M)*G$ as a vector space;
\item every $g\in G$ is equal to $c^\ell\tau^r$ for a unique couple
  $(\ell,r) \in \{0,1,2,3,4\}\times \{0,1\}$;
\item for all paths $p$ of $Q$ and for all $(\ell,r) \in
  \{0,1,2,3,4\}\times \{0,1\}$, the product $\tilde e \cdot (p * g)
  \cdot \tilde e$ vanishes as soon as $p$ is not a path from $\overline
  0$ to $-\overline{\ell}$.
\end{itemize}
\begin{set}
  \label{sec:an-example-1}
  Assume that $w$ is equal to $p*c^\ell\tau^r$, where $(\ell,r) \in
  \{0,1,2,3,4\} \times \{0,1\}$ and $p$ is a path in $Q$ from
  $\overline 0$ to $-\overline{\ell}$. Denote the length of $p$ by
  $n$. Denote by $i_0,\ldots,i_n$ the sequence of vertices of $Q$ such
  that $p$ has the shape $i_0\to i_1\to \cdots \to i_n$. Thus, $p =
  x_{i_0,i_1}x_{i_1,i_2}\cdots x_{i_{n-1},i_n}$. In particular,
  $i_0=\overline{0}$,  $i_n = -\overline{\ell}$ and $i_k-i_{k-1}\in
  \{-\overline{1},\overline{1}\}$ for all $k\in \{1,\ldots,n\}$.
\end{set}

\subsubsection{Computation of $Q_G$}
\label{sec:computation-q_g}

By definition, for all $t\in \{0,1\}$,
\[
M^*(\overline{0},\overline{0};\rho_t) = \mathrm{Span}(x'_{\overline{0},\overline{1}}\otimes
c{\varepsilon}_t,\,x'_{\overline{0},\overline{4}}\otimes c^4{\varepsilon}_t)\,,
\]
where the action of $\tau$ is given as follows
\[
\left\{
  \begin{array}{rcl}
    x'_{\overline{0},\overline{1}}\otimes c{\varepsilon}_t & \longmapsto & (-1)^{1+t}x'_{\overline{0},\overline{4}}\otimes c^4{\varepsilon}_t \\
    x'_{\overline{0},\overline{4}}\otimes c^4{\varepsilon}_t & \longmapsto & (-1)^{1+t}x'_{\overline{0},\overline{1}}\otimes c{\varepsilon}_t\,.
  \end{array}
\right.
\]
Hence, for all $s,t \in \{0,1\}$,
\[
\Hom_{{\mathbbm{k}} G_{\overline{0}}}(\rho_s,M^*(\overline{0},\overline{0};\rho_t)) = \mathrm{Span}(\varphi_{s,t})\,,
\]
where $\varphi_{s,t}$ is the intertwiner such that
$\varphi_{s,t}({\varepsilon}_s) = {\varepsilon}_s \cdot (x'_{\overline{0},\overline{1}} \otimes c{\varepsilon}_t)$
($= {\varepsilon}_s \cdot (x_{\overline{0},\overline{1}}' \otimes c) \cdot {\varepsilon}_t$), that is,
\[
\varphi_{s,t}({\varepsilon}_s) = x'_{\overline{0},\overline{1}} \otimes c {\varepsilon}_t +
(-1)^{s+t+1}x'_{\overline{0},\overline{4}}\otimes c^4 {\varepsilon}_t\,.
\]

Denote by $f_{s,t}$ the intertwiner $\rho_s \to M(\overline{0},\overline{0};\rho_t)$ such
that $(f_{s,t}|\varphi_{t,s})=1$, for all $s,t\in \{0,1\}$. Then $Q_G$
may be taken equal to the following quiver
\[
\xymatrix{
  (1,\rho_0) \ar@(ul,ur)^{f_{0,0}} \ar@<2pt>[rr]^{f_{0,1}} &&
  (1,\rho_1) \ar@(ul,ur)^{f_{1,1}} \ar@<2pt>[ll]^{f_{1,0}}\,.
}
\]

\subsubsection{The intertwiner $\Xi(\theta)$}
\label{sec:intertw-assoc-theta}

By definition, $\Xi(\theta)$ decomposes as
\begin{equation}
  \label{eq:48}
\Xi(\theta) = \sum_{s,t\in \{0,1\}} \Xi(\theta)_{s,t}\,,
\end{equation}
where, for all $s,t\in \{0,1\}$, the intertwiner
$\Xi(\theta)_{s,t} \colon \rho_s \to
M(\underset{n+1}{\underbrace{0,\ldots,0}};\rho_t)$ is such that
$ \Xi(\theta)_{s,t}({\varepsilon}_s) = {\varepsilon}_s \cdot \tilde e \cdot w \cdot \tilde
e\cdot {\varepsilon}_t$, see (\ref{eq:43}). Note that
\begin{equation}
  \label{eq:53}
  \begin{array}{rcl}
    \Xi(\theta)_{s,t}({\varepsilon}_s)
    & = &
          {\varepsilon}_s \cdot \tilde e \cdot w \cdot \tilde
          e\cdot {\varepsilon}_t \\
  & = &
        \frac{1}{2}(\mathrm{Id}+(-1)^s\tau) \cdot (p \otimes c^\ell \tau^r)\cdot {\varepsilon}_t \\
  & = &
        \frac{(-1)^{rt}}{2}
        (\mathrm{Id}+(-1)^s\tau)
        \cdot (p \otimes c^\ell {\varepsilon}_t)
        \\
  & = &
        \frac{(-1)^{rt}}{2} \left(
        p \otimes c^\ell {\varepsilon}_t
        + (-1)^{s+t} (\,^{\tau}p)\otimes c^{-\ell}{\varepsilon}_t
        \right)\,,
\end{array}
\end{equation}
where
$^\tau p = (-1)^nx_{-i_0,-i_1}x_{-i_1,-i_2}\cdots x_{-i_{n-1},-i_n}$.

\subsubsection{Intertwiners relative to $M^*$ associated to paths of
  $Q_G$}
\label{sec:intertw-relat-m}

Consider a path of length denoted by $m$ in $Q_G$, say
$q \colon (0,\rho_{s_0}) \to (0,\rho_{s_1}) \to \cdots \to
(0,\rho_{s_m})$, where $s_0,\ldots,s_m\in \{-1,1\}$. Then
$\varphi_q$ is the following intertwiner
$\rho_{s_m}\to M^*(\underset{n+1}{\underbrace{0,\ldots,0}};\rho_{s_0})$,
\[
  \varphi_q = \varphi_{s_1,s_0} \circledast \varphi_{s_2,s_1}
  \circledast \cdots \circledast \varphi_{s_m,s_{m-1}}\,.
\]
Consider $\varphi_q({\varepsilon}_{s_m})$ as an element of ${\mathbbm{k}} Q^**G$;
following (\ref{eq:12}) it equals the product
$(x_{\overline{0},\overline{1}}'\otimes
(-1)^{s_m+s_{m-1}+1}x_{\overline{0},\overline{4}} \otimes c^4)\cdots
(x_{\overline{0},\overline{1}}'\otimes
(-1)^{s_1+s_0+1}x_{\overline{0},\overline{4}} \otimes c^4)$.  Hence
\begin{equation}
  \label{eq:46}
  \begin{array}{l}
    \varphi_q({\varepsilon}_{s_m}) =
    \sum
    (-1)^{\sum s_k+s_{k-1}+1}
    x_{\overline{0},j_{m-1}}' x_{j_{m-1}j_{m-2}}' \cdots x_{j_1,j_0}'
    \otimes c^{j_0}\,,
  \end{array}
\end{equation}
where 
\begin{itemize}
\item the first sum runs over all sequences
  $j_{m-1},\ldots,j_0\in \mathbb Z/4\mathbb Z$ such that the
  expression $x_{\overline{0},j_{m-1}}' \cdots x_{j_1,j_0}'$ is a path
  in $Q'$, which means that
  $j_t-j_{t-1}\in \{-\overline{1},\overline{1}\}$ for all
  $t\in \{1,\ldots,m\}$ (taking $j_m=\overline{0}$),
\item and the second sum (in the exponent) runs over all indices
  $k\in \{1,\ldots,m\}$ such that $x_{j_k,j_{k-1}}'$ of $Q'$ goes
  anticlockwise when $Q'$ is drawn like $Q$ in (\ref{eq:54}), which
  means that $j_k-j_{k-1}=\overline{1}$.
\end{itemize}
      
\subsubsection{Computation of the pairing $(\Xi(\theta)|\varphi_q)$}
\label{sec:pair-xith}

Let
$q \colon (0,\rho_{s_0}) \to (0,\rho_{s_1}) \to \cdots \to
(0,\rho_{s_m})$ be a path in $Q_G$ like in
\ref{sec:intertw-relat-m}. Because of the definition of the action of
$G$ on ${\mathbbm{k}} Q$, the pairing $(\Xi(\theta)|\varphi_q)$
($=(\Xi(\theta)_{s_0,s_m}|\varphi_q)$) is given by (\ref{eq:51}). In
particular, because of (\ref{eq:53}) and (\ref{eq:46}),
\begin{itemize}
\item this pairing vanishes if $m\neq n$, that is, $p$ and $q$ have
  different lengths,
\item if $m=n$ then, for all paths $\gamma$ distinct from both
  $x_{i_0,i_1} x_{i_1,i_2} \cdots x_{i_{n-1},i_n}$ and
  $x_{i_0,-i_1} x_{-i_1,-i_2}\cdots x_{-i_{n-1},-i_n}$, the
  term of index $\gamma$ in (\ref{eq:53}) vanishes.
\end{itemize}
Using (\ref{eq:53}) and (\ref{eq:46}), the scalars appearing in
(\ref{eq:51}) may be computed directly for these two possibilities for
$\gamma$, assuming that $m=n$. Table~\ref{tab:1} describes these
scalars, they appear in bold face. Recall that $i_0=\overline{0}$.

\begin{table}[!ht]
  \tiny
\begin{tabularx}{\textwidth}{>{\hsize=.17\hsize}X>{\hsize=.39\hsize\centering}X>{\hsize=.37\hsize\centering\arraybackslash}X}
  \hline $\gamma$ &
  $x_{i_0,i_1} x_{i_1,i_2} \cdots x_{i_{n-1},i_n}$ &
  $x_{i_0,-i_1}
  x_{-i_1,-i_2}\cdots x_{-i_{n-1},-i_n}$ \\
  \hline
  $\mathbf{\alpha_\gamma}$ & $\mathbf{\frac{(-1)^{rs_n}}{2}}$ &
  $\mathbf{\frac{(-1)^{rs_n+s_0+s_n+n}}{2}}$ \\
  $y_1,\ldots,y_n$ &
  $c^{i_1-i_0},c^{i_2-i_1},\ldots,c^{i_n-i_{n-1}}$
  &
  $c^{-i_1+i_0},c^{-i_2+i_1},\ldots,c^{-i_n+i_{n-1}}$
  \\
  $y_1\cdots y_n$ & $c^{i_n}$ & $c^{-i_n}$ \\
  $^{(y_1\cdots y_n)^{-1}}\gamma$ &
  $x_{-i_n,i_1-i_n}x_{i_1-i_n,i_2-i_n}\cdots x_{i_{n-1}-i_n,\overline{0}}$
  &
  $x_{i_n,i_n-i_1}x_{i_n-i_1,i_n-i_2}\cdots x_{i_n-i_{n-1},\overline{0}}$
  \\
  $\mathbf{\chi_{(y_1\cdots y_n)^{-1},\gamma}}$ & $\mathbf{1}$ & $\mathbf{1}$ \\
  $(y_1\cdots y_n)^{-1}(\gamma)^*$ &
  $x_{\overline{0},i_{n-1}-i_n}'\cdots x_{i_2-i_n,i_1-i_n}'x_{i_1-i_n,-i_n}'$
  &
  $x_{\overline{0},i_n-i_{n-1}}'\cdots x_{i_n-i_2,i_n-i_1}'x_{i_n-i_1,i_n}'$
  \\ & & \\
  $\mathbf{\beta_{(y_1\cdots y_n)^{-1}(\gamma)^*}}$ & $\mathbf{(-1)^x}$ where $x$ is & $\mathbf{(-1)^x}$,
  where $x$ is \\
  &
  $\sum\limits_{\begin{array}{c}
                    1\leqslant k\leqslant n\\
                    \mathrm{s.t.}\ i_k-i_{k-1}=\overline{1}
                  \end{array}}s_k+s_{k-1}+1$
                & 
  $\sum\limits_{\begin{array}{c}
                    1\leqslant k\leqslant n\\
                    \mathrm{s.t.}\ i_k-i_{k-1}=-\overline{1}
                  \end{array}}s_k+s_{k-1}+1$
                \\ & & \\
                $h_0$ & $\mathrm{Id}$ & $\mathrm{Id}$ \\
                $\mathbf{\chi_{\rho_{s_0}}(h_0)}$ & $\mathbf{1}$ & $\mathbf{1}$ \\
  \hline
\end{tabularx}
\caption{\tiny The coefficients in the expression (\ref{eq:51}) of
  $(\Xi(\theta)_{s_0,s_m}|\varphi_q)$ according to $\gamma$}
\label{tab:1}
\end{table}
Thus
\[
  \begin{array}{rcl}
    (\Xi(\theta)|\varphi_q)
    & = &
          \frac{(-1)^{rs_0}}{2}(-1)^{
          \sum_{i_k-i_{k-1}=\overline{1}}
          (s_k+s_{k-1}+1)
          }
      \\
    & & + \\
    & &
        \frac{(-1)^{rs_0+s_0+s_n+n}}{2}(-1)^{
        \sum_{i_k-i_{k-1}=-\overline{1}}
          (s_k+s_{k-1}+1)}\,.
  \end{array}
\]

As whole (after factoring and simplifying),
\begin{equation}
  \label{eq:55}
  (\Xi(\theta)|\varphi_q) =
  \left\{
    \begin{array}{ll}
      (-1)^{rs_0}(-1)^{
    \sum_{i_k-i_{k-1}=\overline{1}}
    (s_k+s_{k-1}+1)} & \text{if $m=n$} \\
      0 & \text{otherwise.}
    \end{array}
    \right.
\end{equation}

\subsubsection{Decomposition of $\theta$ into paths in $Q_G$}
\label{sec:decomposition-theta}

Because of Theorem~\ref{sec:defin-main-results-4}, the preimage of
$\theta$ under the isomorphism
${\mathbbm{k}} Q_G \xrightarrow{~\eqref{eq:27}} \tilde e \cdot (T_S(M)*G)\cdot
\tilde e$ is equal to
\[
  \sum_{\text{$q$ path in $Q_G$}}
  (\Xi(\theta)|\varphi_q)\cdot q\,.
\]
In view of (\ref{eq:55}), this preimage is hence equal to
\[
  \sum_{s_0,\ldots,s_n\in \{0,1\}} (-1)^{rs_0}(-1)^{
    \sum_{i_k-i_{k-1}=\overline{1}} s_k+s_{k-1}+1 }\cdot
  ((0,\rho_{s_0})\to (0,\rho_{s_1})\to \cdots \to (0,\rho_{s_n}))\,.
\]

\subsubsection{Illustration}
\label{sec:illustration}

Assume that
$w=x_{\overline{0},\overline{1}}x_{\overline{1},\overline{2}}x_{\overline{2},\overline{3}}x_{\overline{3},\overline{4}}x_{\overline{4},\overline{0}}
* \mathrm{Id}
- x_{\overline{0},\overline{4}} x_{\overline{4},\overline{3}}
x_{\overline{3},\overline{2}} x_{\overline{2},\overline{1}}
x_{\overline{1},\overline{0}}*\mathrm{Id}$. Applying \ref{sec:decomposition-theta}
yields that the preimage of $\tilde e\cdot w\cdot \tilde e$
under the isomorphism
${\mathbbm{k}} Q_G \xrightarrow{~\eqref{eq:27}} \tilde e \cdot (T_S(M)*G) \cdot
\tilde e$ is equal to $-2\sum_{\gamma}\gamma$, where $\gamma$ runs
through all oriented cycles of length $5$ in $Q_G$.

\section{Details on the monoidal category of bimodules over
  groups}
\label{sec:pair-betw-morph}

This section details some isomorphisms and properties in the monoidal
category $(\mathrm{mod}(A^e),\otimes_A)$ and related to the operation, say
$\times$, and the pairing $\langle-|-\rangle$ considered in
Remark~\ref{sec:defin-main-results-2}.
\begin{itemize}
\item Section~\ref{sec:tensor-product-duals} describes the left dual
  of a tensor product of objects in $\mathrm{mod}(A^e)$.
\item Section~\ref{sec:adjunctions} details the the adjunction
  $(X\otimes_A-)\vdash (X^*\otimes_A-)$ for a given $X\in \mathrm{mod}(A^e)$.
\item Section~\ref{sec:pairing} details the compatibility of
  $\langle-|-\rangle$. 
\end{itemize}
The existence of the isomorphisms and adjunctions discussed in these
sections may be part of the folklore (see
\cite[Exercise 2.10.16]{MR3242743}). However, the proof of
Theorem~\ref{sec:defin-main-results} is based on their description,
which is the reason for detailing these here. For the sake of
simplicity, the presentation is made independently of the monoidal
category $(\mathrm{mod}(A^e),\otimes_A)$ and of the framework of
Theorem~\ref{sec:defin-main-results}.

Here are some conventions. In any group, $e$ denotes the neutral
element; and $\Hom_{{\mathbbm{k}} G}$ and $\otimes_{{\mathbbm{k}} G}$ are denoted by
$\Hom_G$ and $\otimes_G$, respectively, for all finite groups $G$.

  Given finite groups $G$ and $H$, by ``\emph{a module $_GX_H$}'' is
meant a ${\mathbbm{k}} G-{\mathbbm{k}} H$-bimodule, by ``\emph{a module $_GX$}'' is meant a
left ${\mathbbm{k}} G$-module, and by ``\emph{by a module $X_H$} is meant a right
${\mathbbm{k}} H$-module.

  Given finite groups $G$ and $H$ and given a ${\mathbbm{k}} G-{\mathbbm{k}} H$-bimodule
$X$, the dual vector space $X^*$ is a ${\mathbbm{k}} H-{\mathbbm{k}} G$-bimodule in a
natural way ($h\cdot \phi \cdot g = \phi(g\cdot \bullet \cdot h)$ for
all $g\in G$, $h\in H$ and $\phi \in X^*$). In particular, taking $H$
to be the trivial group, this defines a functor $X\mapsto X^*$ from
left ${\mathbbm{k}} G$-modules to right ${\mathbbm{k}} G$-modules.

\subsection{Tensor product of duals and dual of a tensor product}
\label{sec:tensor-product-duals}

\begin{lem}
  \label{sec:prop-psi-langle}
  Let $H$ be a finite group with order not divisible by $\mathrm{char}({\mathbbm{k}})$. For
  all finite dimensional ${\mathbbm{k}} H$-modules $X_H$ and $_HY$, there is a
  functorial bijective mapping
  \begin{equation}
    \label{eq:9}
    \begin{array}{rcl}
      Y^* \otimes_H X^* & \to & (X \otimes_H Y)^* \\
      \phi_2 \otimes \phi_1 & \mapsto & (x \otimes y \mapsto
                                        \sum_{h\in H} \phi_1(x\cdot h)
                                        \phi_2(h^{-1}\cdot y))\,.
    \end{array}
  \end{equation}
\end{lem}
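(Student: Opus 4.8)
The plan is to (i) check that \eqref{eq:9} defines a natural transformation, (ii) reduce to the case of a single free module by dévissage using the semisimplicity of $\k H$, and (iii) settle that base case by an explicit identification. For well-definedness, observe first that for fixed $\phi_2\otimes\phi_1$ the assignment $x\otimes y\mapsto\sum_{h\in H}\phi_1(x h)\phi_2(h^{-1} y)$ is $\k H$-balanced in $(x,y)$: the substitution $h\mapsto h_0 h$ turns the value at $(x h_0,y)$ into the value at $(x, h_0 y)$, so the formula produces a genuine element of $(X\otimes_H Y)^*$. Symmetrically, the bilinear map $(\phi_2,\phi_1)\mapsto\bigl(x\otimes y\mapsto\sum_h\phi_1(xh)\phi_2(h^{-1}y)\bigr)$ is $\k H$-balanced in $(\phi_2,\phi_1)$, the functionals attached to $(\phi_2\cdot h_0,\phi_1)$ and to $(\phi_2,h_0\cdot\phi_1)$ agreeing after the substitution $h\mapsto h h_0$; recall that $X^*$ carries the left action $(h\cdot\phi_1)(x)=\phi_1(x h)$ and $Y^*$ the right action $(\phi_2\cdot h)(y)=\phi_2(h y)$. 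This yields a $\k$-linear map $\Psi=\Psi_{X,Y}\colon Y^*\otimes_H X^*\to(X\otimes_H Y)^*$, and its defining formula makes it manifestly natural in both $X$ and $Y$.

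Next I would reduce to a free module. Both $X\mapsto(X\otimes_H Y)^*$ and $X\mapsto Y^*\otimes_H X^*$ are additive (contravariant) functors of $X$, and $\Psi$ is natural; therefore $\Psi_{X'\oplus X'',Y}$ decomposes as the direct sum $\Psi_{X',Y}\oplus\Psi_{X'',Y}$, hence is bijective as soon as each summand is. Because $\car$ does not divide $|H|$, Maschke's theorem makes $\k H$ semisimple, so the right $\k H$-module $X$ is a direct summand of a free module; by additivity it thus suffices to prove the lemma when $X=\k H$ is the right regular module, with $Y$ arbitrary.

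Finally I would treat this base case directly. The left $\k H$-module $X^*=(\k H)^*$ is free of rank one, generated by $e^*$, since $h\cdot e^*=(h^{-1})^*$ runs through the dual basis; hence every element of $Y^*\otimes_H X^*$ is uniquely of the form $\phi_2\otimes e^*$ with $\phi_2\in Y^*$, and $\phi_2\mapsto\phi_2\otimes e^*$ identifies $Y^*$ with $Y^*\otimes_H X^*$. Using the multiplication isomorphism $\k H\otimes_H Y\xrightarrow{\sim}Y$, a short evaluation gives $\Psi(\phi_2\otimes e^*)(g\otimes y)=\phi_2(g y)$ for $g\in H$ and $y\in Y$, so that $\Psi(\phi_2\otimes e^*)$ is precisely the image of $\phi_2$ under the dual isomorphism $Y^*\xrightarrow{\sim}(\k H\otimes_H Y)^*$. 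Thus $\Psi$ becomes the identity after these identifications, and is bijective.

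The main obstacle, and the only place where the hypothesis on $\car$ is genuinely used, is the dévissage: without semisimplicity an arbitrary module need not be a summand of a free one, and in that case the statement actually fails. For instance, if $X=Y=\k$ is the trivial module then the formula collapses to $\phi_2\otimes\phi_1\mapsto|H|\,\phi_1\phi_2$, which vanishes when $\car$ divides $|H|$. Everything else—the two balancing verifications and the bookkeeping of the left and right $\k H$-actions on $X^*$ and $Y^*$—is routine once the conventions are fixed.
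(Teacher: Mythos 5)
Your proof is correct and follows essentially the same route as the paper: check well-definedness, use Maschke's theorem to reduce by additivity to a free module, and verify the base case by an explicit identification. The only (cosmetic) difference is that you reduce in the variable $X$ alone and identify the base case with the dual of the multiplication isomorphism $\k H\otimes_H Y\xrightarrow{\sim}Y$, whereas the paper reduces in both variables to $X=Y=\k H$ and identifies the map with $\delta_{h_2}\otimes\delta_{h_1}\mapsto\delta_{h_1h_2}$; you also supply the balancing verifications that the paper leaves implicit.
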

\begin{proof}
  The mapping is well defined. When $X={\mathbbm{k}} H_H$ and $Y=\,_H{\mathbbm{k}} H$, it
  identifies with the following one, where $\{\delta_h\}_{h \in H}$
  denotes the canonical basis of $({\mathbbm{k}} H)^*$,
  \[
  \begin{array}{ccc}
    ({\mathbbm{k}} H)^* \otimes_H ({\mathbbm{k}} H)^* & \to & ({\mathbbm{k}} H)^* \\
    \delta_{h_2} \otimes \delta_{h_1} & \mapsto & \delta_{h_1h_2}\,;
  \end{array}
  \]
  and the latter is bijective. Since $X_H$ and $_HY$ are finite dimensional
  and projective, then (\ref{eq:9}) is bijective.
\end{proof}
The previous lemma generalises as follows. Given an integer
$n\geqslant 2$, a sequence of groups $G_0,\ldots,G_n$ and a finite
dimensional ${\mathbbm{k}} G_{i-1}-{\mathbbm{k}} G_i$-bimodule $U_i$ for every
$i\in \{1,\ldots,n\}$, there is a functorial bijective mapping
\begin{equation}
  \label{eq:15}
  X_n^* \otimes_{G_{n-1}} \cdots \otimes_{G_1} X_1^*
  \to
  (X_1 \otimes_{G_1}\cdots \otimes_{G_{n-1}} X_n)^*\,,
\end{equation}
which maps any  $\phi_n \otimes \cdots \otimes \phi_1$ to the
following linear form on $X_1 \otimes_{G_1}\cdots \otimes_{G_{n-1}} X_n$,
\[
x_1\otimes \cdots \otimes x_n \mapsto \sum_{g_1,\ldots,g_{n-1}} \prod_{t=1}^n
\phi_t(g_{t-1}^{-1}\cdot x_t \cdot g_t)\,,
\]
where $g_t$ runs through $G_t$  ($1\leqslant t \leqslant n-1$) and
$g_0=g_n=e$. 

\subsection{Adjunctions}
\label{sec:adjunctions}

\begin{lem}
  \label{sec:pair-betw-morph-1}
  Let $G$ and $H$ be finite groups with orders not divisible by
  $\mathrm{char}({\mathbbm{k}})$. Let $_GX_H$, $_HV$ and $_GU$ be (bi)modules over ${\mathbbm{k}} G$ and
  ${\mathbbm{k}} H$ such that $X$ and $V$ are finite dimensional. Then, there
  exists a functorial isomorphism
  \[
  \Psi^{V,U}_X \colon \Hom_H(V,X^* \otimes_G U) \to \Hom_G(X \otimes_H
  V,U)\,,
  \]
  such that, for all $\phi \in \Hom_H(V,X^* \otimes_G U)$, $x \in X$
  and $v\in V$,
  \begin{equation}
    \label{eq:16}
    (\Psi^{V,U}_X(\phi))(x \otimes v) = \sum_{g\in G}
    (\phi^{(2)}(v)) (g^{-1}\cdot x) g \cdot \phi^{(1)}(v)\,,
  \end{equation}
  where the following notation is used with sum sign omitted, for all
  $v\in V$,
  \[
  \phi(v) = \phi^{(2)}(v) \otimes \phi^{(1)}(v) \in X^*
  \otimes_G U\,.
  \]
  In particular, the pair of functors $(X\otimes_H-,X^*\otimes_G -)$
  between finite dimensional left ${\mathbbm{k}} H$-modules and finite
  dimensional left ${\mathbbm{k}} G$-modules is adjoint.
\end{lem}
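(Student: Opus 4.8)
The plan is to verify directly that the explicit formula (\ref{eq:16}) defines a $\k$-linear map $\Psi^{V,U}_X$ landing in $\Hom_G(X\otimes_H V,U)$, to exhibit an explicit inverse built from a basis of $X$, and then to read off naturality (hence the adjunction) from the formula. Throughout I use that $X^*$ is the $\k H$-$\k G$-bimodule with $(h\cdot\phi\cdot g)(x)=\phi(g\,x\,h)$, so that the left $H$-action is $(h\cdot\phi)(x)=\phi(xh)$ and the right $G$-action is $(\phi\cdot g)(x)=\phi(gx)$.

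First I would check well-definedness of $\Psi^{V,U}_X(\phi)$ for a fixed $\phi\in\Hom_H(V,X^*\otimes_G U)$, which amounts to three points. (i) The right-hand side of (\ref{eq:16}) is independent of the chosen representative of $\phi(v)$ in the balanced product $X^*\otimes_G U$: replacing $\phi^{(2)}(v)\otimes\phi^{(1)}(v)$ by $(\phi^{(2)}(v)\cdot g_0)\otimes\phi^{(1)}(v)$ versus $\phi^{(2)}(v)\otimes(g_0\cdot\phi^{(1)}(v))$ yields the same sum after the reindexing $g\mapsto gg_0^{-1}$. (ii) The resulting linear form is $\k H$-balanced, hence descends to $X\otimes_H V$: using that $\phi$ is $\k H$-linear and that $(h\cdot\phi^{(2)}(v))(g^{-1}x)=\phi^{(2)}(v)(g^{-1}xh)$ one gets $\Psi^{V,U}_X(\phi)(xh\otimes v)=\Psi^{V,U}_X(\phi)(x\otimes hv)$. (iii) It is $\k G$-linear: the reindexing $g\mapsto g_1g$ turns $\Psi^{V,U}_X(\phi)(g_1x\otimes v)$ into $g_1\cdot\Psi^{V,U}_X(\phi)(x\otimes v)$. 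Linearity of $\Psi^{V,U}_X$ in $\phi$ is immediate.

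Next I would produce the inverse. Fix a basis $(x_i)$ of the finite-dimensional space $X$ with dual basis $(x_i^*)$ in $X^*$, and for $\theta\in\Hom_G(X\otimes_H V,U)$ set $\Phi(\theta)(v)=\frac{1}{|G|}\sum_i x_i^*\otimes\theta(x_i\otimes v)$. A short computation, using $h\cdot x_j^*=\sum_i x_j^*(x_ih)\,x_i^*$ and that $\theta$ factors through $\otimes_H$, shows $\Phi(\theta)$ is $\k H$-linear. To see $\Psi^{V,U}_X\circ\Phi=\id$ I would use the expansion $\sum_i x_i^*(g^{-1}x)\,x_i=g^{-1}x$ together with the $\k G$-linearity of $\theta$ to collapse each inner sum to $\theta(x\otimes v)$, leaving $\frac{1}{|G|}\sum_{g\in G}\theta(x\otimes v)=\theta(x\otimes v)$. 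Conversely, for $\Phi\circ\Psi^{V,U}_X=\id$ the $x_i$-sum reassembles $\phi^{(2)}(v)\cdot g^{-1}$, and the balancing relation in $X^*\otimes_G U$ rewrites $(\phi^{(2)}(v)\cdot g^{-1})\otimes g\cdot\phi^{(1)}(v)=\phi(v)$, so again $\frac{1}{|G|}\sum_{g\in G}\phi(v)=\phi(v)$. The two averagings are exactly where the hypothesis $\car\nmid|G|$ enters, while the finiteness of $X$ is used in the basis expansion. Since $\Psi^{V,U}_X$ is basis-free, uniqueness of inverses shows a posteriori that $\Phi$ does not depend on the chosen basis.

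Finally, naturality of $\Psi^{V,U}_X$ in $U$ and $V$ (and in $X$) follows by inspection of (\ref{eq:16}), and the resulting natural isomorphism $\Hom_H(V,X^*\otimes_G U)\cong\Hom_G(X\otimes_H V,U)$ is precisely an adjunction $(X\otimes_H-)\vdash(X^*\otimes_G-)$ between the categories of finite-dimensional left $\k H$- and left $\k G$-modules. I expect the only delicate part to be the bookkeeping in the three well-definedness checks and the two group-element reindexings; once the normalisation $\frac{1}{|G|}$ is in place, everything reduces to the basis identity for $X$ and the balancing relation defining $\otimes_G$.
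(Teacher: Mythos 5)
Your proof is correct, but it takes a genuinely different route from the paper's. The paper exploits semisimplicity twice over: since $_HV$ is projective it reduces to $V=\k H$, where $\Psi^{V,U}_X$ identifies with a map $\lambda_{X,U}\colon X^*\otimes_G U\to\Hom_G(X,U)$, and then, since $_GX$ and $_GU$ are projective, it reduces further to the visibly bijective case $X=U=\k G$; compatibility with direct sum decompositions does the rest. You instead verify well-definedness of \eqref{eq:16} by hand and exhibit an explicit two-sided inverse $\theta\mapsto\frac{1}{|G|}\sum_i x_i^*\otimes\theta(x_i\otimes v)$ built from a dual basis of $X$, collapsing the compositions via the identities $\sum_i x_i^*(g^{-1}x)\,x_i=g^{-1}x$ and the balancing relation in $X^*\otimes_G U$. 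Your computations check out (the reindexings in (i)--(iii) are consistent with the paper's convention $h\cdot\phi\cdot g=\phi(g\cdot\bullet\cdot h)$, and both composites do average to the identity). What your approach buys is a concrete formula for the inverse and a precise localisation of where $\car\nmid|G|$ is used; in fact your argument nowhere invokes $\car\nmid|H|$, so it establishes the statement under a slightly weaker hypothesis than the paper's reduction, which needs $\k H$ semisimple to reduce to $V=\k H$. What the paper's approach buys is brevity and the fact that the same reduction pattern is reused verbatim in the subsequent Lemma~\ref{sec:pair-betw-morph-3}. Only minor caveat: you should note explicitly that $\Phi(\theta)(v)$, as an element of the balanced product $X^*\otimes_G U$, is well defined independently of nothing (it is a genuine tensor), but that its $\k H$-linearity is the one point requiring the dual-basis manipulation you sketch --- which you do address.
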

\begin{proof}
  Using the definition of the structure of $H-G$-bimodule of $X^*$,
  simple changes of variable in the sum ``$\sum_{g\in G}$" show that
  $\Psi_X^{V,U}$ is indeed well defined.
  
  Since $_HV$ is finitely generated and projective, it suffices to
  prove the lemma assuming that $V={\mathbbm{k}} H$. In this case, $\Psi^{V,U}_X$
  identifies with the mapping $\lambda_{X,U}$ defined (for all left
  ${\mathbbm{k}} G$-modules $X$ and $U$) by
  \begin{equation}
    \label{eq:31}
    \begin{array}{rcl}
      X^* \otimes_G U & \to & \Hom_G(X,U) \\
      \phi \otimes u & \mapsto & (x \mapsto \sum_{g\in
                                 G}\phi(g^{-1}\cdot x)g\cdot u)\,.
    \end{array}
\end{equation}
Now, $\lambda_{{\mathbbm{k}} G,{\mathbbm{k}} G}$ identifies with the following bijective
mapping (recall that $\vert G\vert \in {\mathbbm{k}}^\times$).
  \[
  \begin{array}{ccc}
    {\mathbbm{k}} G^* \otimes_{G} {\mathbbm{k}} G & \to & {\mathbbm{k}} G \\
    \phi \otimes e & \mapsto & \sum_{g\in G}\phi(g^{-1})g\,.
  \end{array}
  \]
  Since $_GX$ is finite dimensional and projective,  and since $_GU$ is
  projective, it follows that $\lambda_{X,U}$ is bijective. Thus,
  $\Psi^{V,U}_X$ is bijective.
\end{proof}

\begin{lem}
  \label{sec:pair-betw-morph-3}
  Let $G,H,K$ be finite groups with orders not divisible by
  $\mathrm{char}({\mathbbm{k}})$. Let $_GX_H$, $_HY_K$, $_KW$ and $_GU$ be finite dimensional
  modules. Then, the following diagram commutes
  \begin{equation}
    \label{eq:33}
  \xymatrix{ \Hom_K(W,Y^* \otimes_H X^* \otimes_G U) \ar[rr]
    \ar[d]_{\Psi_Y^{W,X^* \otimes_G U}} && \Hom_K(W, (X \otimes_H Y)^* \otimes_G U)
    \ar[d]^{\Psi_{X\otimes_H Y}^{W,U}} \\
    \Hom_H(Y \otimes_K W,X^* \otimes_G U) \ar[rr]_{\Psi_X^{Y
        \otimes_K W,U}} && \Hom_G(X
    \otimes_H Y \otimes_K W,U)\,, }    
  \end{equation}
  where the top horizontal arrow is obtained upon applying
  $\Hom_K(W,-\otimes_G U)$ to (\ref{eq:9}).
\end{lem}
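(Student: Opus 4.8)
The plan is to verify commutativity of (\ref{eq:33}) directly, by evaluating both composites on an arbitrary $\psi \in \Hom_K(W, Y^*\otimes_H X^*\otimes_G U)$ and comparing the resulting morphisms in $\Hom_G(X\otimes_H Y\otimes_K W, U)$ on a general elementary tensor $x\otimes y\otimes w$. Since the four maps are already known to be well defined (Lemmas~\ref{sec:prop-psi-langle} and \ref{sec:pair-betw-morph-1}), this equality of linear forms suffices. Following the sum-omitted convention of Lemma~\ref{sec:pair-betw-morph-1}, I would write, for $w\in W$,
\[
\psi(w) = \psi^{(3)}(w)\otimes \psi^{(2)}(w)\otimes \psi^{(1)}(w) \in Y^*\otimes_H X^*\otimes_G U\,,
\]
with $\psi^{(3)}(w)\in Y^*$, $\psi^{(2)}(w)\in X^*$ and $\psi^{(1)}(w)\in U$.

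For the left-then-bottom path, I would first apply $\Psi_Y^{W, X^*\otimes_G U}$ via (\ref{eq:16}), with the groups $(G,H,V,U)$ of that lemma replaced by $(H,K,W,X^*\otimes_G U)$; the key point is that the left $H$-action on $X^*\otimes_G U$ is carried entirely by the $H$-$G$-bimodule structure of $X^*$, so that $h\cdot(\psi^{(2)}(w)\otimes \psi^{(1)}(w)) = (h\cdot\psi^{(2)}(w))\otimes \psi^{(1)}(w)$ where $(h\cdot\psi^{(2)}(w))(x) = \psi^{(2)}(w)(x\cdot h)$. Applying $\Psi_X^{Y\otimes_K W, U}$ to the result, again by (\ref{eq:16}) and using $g^{-1}\cdot(x\otimes y) = (g^{-1}\cdot x)\otimes y$, would yield the value
\[
\sum_{g\in G}\sum_{h\in H} \psi^{(3)}(w)(h^{-1}\cdot y)\,\psi^{(2)}(w)(g^{-1}\cdot x\cdot h)\, g\cdot\psi^{(1)}(w)\,.
\]

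For the top-then-right path, I would first apply the horizontal map, namely (\ref{eq:9}) tensored with $U$ over $G$ and then $\Hom_K(W,-)$: this sends $\psi^{(3)}(w)\otimes\psi^{(2)}(w)$ to the linear form $x\otimes y\mapsto \sum_{h\in H}\psi^{(2)}(w)(x\cdot h)\,\psi^{(3)}(w)(h^{-1}\cdot y)$ in $(X\otimes_H Y)^*$, leaving the factor $\psi^{(1)}(w)$ untouched. Applying $\Psi_{X\otimes_H Y}^{W,U}$ by (\ref{eq:16}), with $X\otimes_H Y$ as the $G$-$K$-bimodule and again $g^{-1}\cdot(x\otimes y)=(g^{-1}\cdot x)\otimes y$, would give
\[
\sum_{g\in G}\sum_{h\in H} \psi^{(2)}(w)(g^{-1}\cdot x\cdot h)\,\psi^{(3)}(w)(h^{-1}\cdot y)\, g\cdot\psi^{(1)}(w)\,.
\]
These two double sums over $G\times H$ coincide, since the scalar factors commute, and this proves commutativity.

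The computation is essentially mechanical; the step I expect to require the most care — and the main obstacle — is the bookkeeping of the bimodule structures: the reversed order of the dual factors in $Y^*\otimes_H X^*$, the precise $H$-$G$-bimodule action $(h\cdot\phi\cdot g)(x)=\phi(g\cdot x\cdot h)$ on $X^*$, and the verification that the intermediate $H$-action appearing in the first path matches exactly the internal summation over $h\in H$ produced by (\ref{eq:9}) in the second path. Once these conventions are fixed, the identification of the summation variables in the two routes is immediate and the equality of the two double sums follows at once.
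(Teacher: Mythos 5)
Your proposal is correct and follows essentially the same route as the paper: both arguments amount to substituting the explicit formulas \eqref{eq:16} and \eqref{eq:9} into the two composites and matching the resulting double sums over $G\times H$, with the only real care being the bimodule conventions you flag. The paper merely first reduces to the case $W=\k K$, $U=\k G$ (using that these modules are projective and the maps are functorial) before doing the same element-level check; your direct computation on general $W$ and $U$ is an equally valid, if slightly heavier, version of the identical calculation.
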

\begin{proof}
  In view of the compatibility of the involved mappings with direct
  sum decompositions of the finite dimensional projective modules
  $_KW$ and $_GU$, it is sufficient to prove the lemma assuming that
  $W={\mathbbm{k}} K$ and $U={\mathbbm{k}} G$.  In this case, the given diagram identifies
  with the following one
  \begin{equation}
    \label{eq:1}
    \xymatrix{
      Y^* \otimes_H X^*
      \ar[rr]^{~(\ref{eq:9})} \ar[d]_{\beta} &&
      (X \otimes_H Y)^*\ar[d]^{\alpha} \\
      \Hom_H(Y,X^*)
      \ar[rr]_{\gamma} &&
      \Hom_G(X \otimes_H Y ,{\mathbbm{k}} G)\,,
    }
  \end{equation}
  where, after using~(\ref{eq:31}),
  \begin{itemize}
  \item $\alpha$ is given by $\phi \mapsto ( x \otimes y \mapsto
    \sum_{g\in G} \phi(g^{-1}\cdot x \otimes y)g)$,
  \item $\beta$ is given by $\phi_2 \otimes \phi_1 \mapsto (y \mapsto
    \sum_{h\in H} \phi_2(h^{-1}\cdot y)\underset{\phi_1(\bullet
      h)}{\underbrace{h\cdot \phi_1}})$ and
  \item $\gamma$ is given by $f \mapsto (x \otimes y\mapsto \sum_{g\in
      G}(f(y))(g^{-1}\cdot x)g)$.
  \end{itemize}
  It is direct to check that (\ref{eq:1}) commutes, which proves the lemma.
\end{proof}

\subsection{Pairing of morphisms}
\label{sec:pairing}

\begin{defn}
  \label{sec:pairing-morphisms}
  Let $G$ and $H$ be finite groups whose orders are not divisible by
  $\mathrm{char}({\mathbbm{k}})$. Let $_GX_H$, $_HV$, and $_GU$ be (bi)modules such that $X$
  and $V$ are finite dimensional.  Assume that $U$ is simple. Define a
  pairing
\begin{equation}
  \label{eq:2}
  \begin{array}{crcl}
    \langle - | - \rangle \colon 
    & \Hom_G(U, X \otimes_H V) \otimes_{\mathbbm{k}} \Hom_H(V,X^*\otimes_G
      U) & \to & {\mathbbm{k}} \\
  \end{array}
\end{equation}
as follows. For all $f\in \Hom_G(U, X \otimes_H V)$ and
$\phi \in \Hom_H(V,X^*\otimes_G U)$,
\begin{equation}
  \label{eq:34}
\langle f | \phi \rangle \cdot \id_U = \Psi_X^{V,U}(\phi) \circ f
\end{equation}
(following Lemma~\ref{sec:adjunctions}, the pairing
$\langle-|-\rangle$ is non-degenerate).
\end{defn}

This pairing is compatible with the following operation on morphisms.
\begin{defn}
  \label{sec:pairing-morphisms-1}
  Let $G$, $H$ and $K$ be finite groups whose orders are not
  divisible by $\mathrm{char}({\mathbbm{k}})$. Let $_GX_H$, $_HY_K$, $_GU$, $_HV$ and $_KW$
  be (bi)modules. For all $f_1 \in \Hom_G(U, X \otimes_H V)$ and
  $f_2 \in \Hom_H(V, Y \otimes_K W)$, define
  $f_2 \times f_1 \in \Hom_G(U, X \otimes_H Y \otimes_K W)$ as the
  following composite morphism
\begin{equation}
  \label{eq:3}
  f_2 \times f_1  \colon U \xrightarrow{f_1} X \otimes_H V \xrightarrow{(X \otimes_H f_2)}X \otimes_H Y \otimes_K W\,.
\end{equation}
\end{defn}

Here is the above-mentioned compatibility.

\begin{lem}
  \label{sec:pair-betw-morph-4}
  Let
  \begin{itemize}
  \item $G$, $H$, and $K$ be finite groups with orders not divisible
    by $\mathrm{char}({\mathbbm{k}})$,
  \item $_GX_H$, $_HY_K$, $_GU$, $_HV$ and $_KW$ be (bi)modules such
    that $X$, $Y$, and $W$ are finite dimensional and $U$ and $V$ are
    simple,
  \item $f_1 \in \Hom_G(U, X \otimes_H V)$ and
    $f_2 \in \Hom_H(V, Y \otimes_K W)$ and
  \item $\phi_1 \in \Hom_H(V, X^*\otimes_G U)$ and
    $\phi_2 \in \Hom_K(W, Y^* \otimes_H V)$.
  \end{itemize}
  Denote by
  $\lambda$ the following isomorphism obtained upon applying
  $-\otimes_GU$ to (\ref{eq:9}),
  \[
  Y^* \otimes_H X^* \otimes_G U\xrightarrow{\sim} (X\otimes_HY)^*
  \otimes_G U\,.
  \]
  Then,
  $\langle f_2\times f_1 | \lambda \circ (\phi_1 \times \phi_2)\rangle =
  \langle f_2 | \phi_2 \rangle \cdot \langle f_1 |
  \phi_1\rangle$.
\end{lem}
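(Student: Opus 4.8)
The statement is a multiplicativity property for the pairing $\langle - | - \rangle$ with respect to the operation $\times$. The natural strategy is to reduce everything to the defining relation \eqref{eq:34} and then use the commuting square of Lemma~\ref{sec:pair-betw-morph-3} to turn the two-step adjunction on the left-hand side into a composite that factors as the product of the two scalars. First I would unwind the definition of the left-hand side: by \eqref{eq:34}, $\langle f_2 \times f_1 \mid \lambda \circ (\phi_1 \times \phi_2)\rangle \cdot \id_U$ equals $\Psi^{V,U}_{X \otimes_H Y}\bigl(\lambda \circ (\phi_1 \times \phi_2)\bigr) \circ (f_2 \times f_1)$, using that $U$ is simple so that $\End_G(U) = \k \cdot \id_U$.

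**Key steps.** The heart of the proof is to rewrite the adjoint $\Psi^{V,U}_{X \otimes_H Y}\bigl(\lambda \circ (\phi_1 \times \phi_2)\bigr)$. Here $\lambda \circ (\phi_1 \times \phi_2)$ is exactly $\Hom_K(W, -\otimes_G U)$ applied to \eqref{eq:9} and precomposed appropriately, so Lemma~\ref{sec:pair-betw-morph-3} applies: the clockwise composite $\Psi^{W,U}_{X\otimes_H Y} \circ (\text{top arrow})$ equals the counterclockwise composite $\Psi^{Y\otimes_K W, U}_X \circ \Psi^{W, X^*\otimes_G U}_Y$. Concretely, writing $\phi_1 \times \phi_2 \in \Hom_K(W, Y^* \otimes_H X^* \otimes_G U)$ as the composite $W \xrightarrow{\phi_2} Y^* \otimes_H V \xrightarrow{Y^* \otimes_H \phi_1} Y^* \otimes_H X^* \otimes_G U$, the commuting square lets me identify $\Psi^{V,U}_{X\otimes_H Y}(\lambda \circ (\phi_1\times\phi_2))$ with $\Psi^{Y\otimes_K W, U}_X\bigl(\Psi^{W,X^*\otimes_G U}_Y(\phi_1\times\phi_2)\bigr)$. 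The remaining task is to show that composing this iterated adjoint with $f_2\times f_1 = (X\otimes_H f_2)\circ f_1$ splits into the two scalars. I would do this by using functoriality of $\Psi$ in its module arguments together with the defining relation \eqref{eq:34} twice: the inner application of $\Psi_Y$ paired against $f_2$ yields $\langle f_2 \mid \phi_2\rangle \cdot \id_V$ by definition (using that $V$ is simple), and propagating that scalar through the outer $\Psi_X$ and pairing against $f_1$ yields the remaining factor $\langle f_1 \mid \phi_1\rangle \cdot \id_U$.

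**Main obstacle.** The delicate point is bookkeeping the module-argument naturality of $\Psi$: the outer adjoint is $\Psi^{Y \otimes_K W, U}_X$, but to apply the defining relation for $\langle f_2 \mid \phi_2\rangle$ I need $\Psi^{W, X^*\otimes_G U}_Y$ paired against $f_2 \in \Hom_H(V, Y\otimes_K W)$, whose target module is $X^* \otimes_G U$ rather than a simple module. I expect the resolution to come from the functoriality statement in Lemma~\ref{sec:pair-betw-morph-1}: $\Psi_Y$ is natural in the coefficient module, so I can first factor $f_2\times f_1$ as $(X\otimes_H f_2)\circ f_1$ and then slide the morphism $\Psi_Y^{V,U}(\phi_1)$ — which realizes $\langle f_1 \mid \phi_1\rangle\cdot\id_U$ — out through the outer $\Psi_X$ and through the tensor factor $X\otimes_H(-)$. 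Keeping the order of the two pairings straight (note the reversal $\phi_1 \times \phi_2$ versus $f_2 \times f_1$) is where an error is most likely to creep in, so I would verify the final identity by an explicit element-level check on $u\in U$ reducing, via the compatibility of the projective modules to the cases $W = \k K$, $V = \k H$, $U = \k G$, which trivializes the sums in \eqref{eq:16}.
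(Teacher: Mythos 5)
Your proposal follows essentially the same route as the paper's proof: apply the commuting square of Lemma~\ref{sec:pair-betw-morph-3} to rewrite the adjoint of $\lambda\circ(\phi_1\times\phi_2)$ as the iterated adjoint $\Psi_X\circ\Psi_Y$, factor $\phi_1\times\phi_2$ through $\phi_2$ and $Y^*\otimes_H\phi_1$, and use naturality of the adjunction isomorphisms to peel off $\langle f_2\mid\phi_2\rangle\cdot\id_V$ and then $\langle f_1\mid\phi_1\rangle\cdot\id_U$. The only blemish is the superscript $\Psi^{V,U}_{X\otimes_H Y}$, which should be $\Psi^{W,U}_{X\otimes_H Y}$ since $\lambda\circ(\phi_1\times\phi_2)$ has source $W$; otherwise the argument is correct and matches the paper's.
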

\begin{proof}
  Using (\ref{eq:33}) yields that
  \[
  \Psi_{X \otimes_H Y}^{W,U}(\lambda \circ (\phi_1\times \phi_2)) =
  \Psi_X^{Y \otimes_K W,U}\circ \Psi_Y^{W,X^*\otimes_G U}(\phi_1\times
  \phi_2)\,.
  \]
  Recall that $\phi_1\times
  \phi_2$ is the following composite morphism
  \[
  W \xrightarrow{\phi_2} Y^* \otimes_H V
  \xrightarrow{Y^*\otimes_H\phi_1} Y^* \otimes_H X^* \otimes_G U\,.
  \]
  Since $\Psi_X^{\bullet,\bullet}$ and $\Psi_Y^{\bullet,\bullet}$ are
  adjunction isomorphisms, then
  \[
  \begin{array}{rcl}
    \Psi_X^{Y \otimes_K W,U}\circ \Psi_Y^{W,X^* \otimes_G U}(\phi_1\times \phi_2)
    & = &
          \Psi_X^{Y \otimes_K W,U}(\phi_1 \circ \Psi_Y^{W,V}(\phi_2))
    \\
    & = &
          \Psi_X^{V,U}(\phi_1) \circ (X \otimes_H
          \Psi_Y^{W,V}(\phi_2))\,.
  \end{array}
  \]
  Therefore,
  \[
  \begin{array}{rcl}
    \Psi_{X\otimes_H Y}^{W,U}(\lambda \circ (\phi_1 \times \phi_2)) \circ
    (f_2 \times f_1)
    &  = &
           \Psi_X^{V,U}(\phi_1) \circ
           (X \otimes_H \Psi_Y^{W,V}(\phi_2)) \circ \\
    &    &
           (X \otimes_H f_2) \circ f_1 \\
    & = &
          \Psi_X^{V,U}(\phi_1) \circ
          ( X\otimes_H \langle f_2 |\phi_2\rangle \cdot \id_V) \circ
          f_1 \\
    & = &
          \langle f_1|\phi_1\rangle \cdot \langle
          f_2|\phi_2\rangle\cdot \id_U\,.
  \end{array}
  \]
  The conclusion of the lemma then follows from the definition of
  $\langle -|-\rangle$.
\end{proof}
The previous lemma has the following generalisation. Consider
\begin{itemize}
\item an integer $n\geqslant 2$,
\item finite groups $G_0,\ldots,G_n$ with orders not divisible by
  $\mathrm{char}({\mathbbm{k}})$,
\item a finite dimensional left ${\mathbbm{k}} G_i$-module $U_i$ for every
  $i\in \{0,\ldots,n\}$, such that $U_0,\ldots,U_{n-1}$ are simple,
\item a finite dimensional ${\mathbbm{k}} G_{i-1}-{\mathbbm{k}} G_i$-bimodule $X_i$
  for every $i\in \{1,\ldots,n\}$,
\item $f_i \in \Hom_{G_{i-1}}(U_{i-1}, X_i\otimes_{G_i}U_i)$ and
  $\phi_i \in \Hom_{G_i}(U_i,X_i^*\otimes_{G_i} U_{i-1})$ for every
  $i\in \{1,\ldots,n\}$.
\end{itemize}
Denote by $\lambda^{(n)}$ the isomorphism obtained upon
applying $-\otimes_{G_0}U_0$ to (\ref{eq:15}),
\[
\lambda^{(n)}\colon X_n^* \otimes_{G_{n-1}} \cdots \otimes_{G_1} X_1^*
\otimes_{G_0}U_0
\xrightarrow{\sim}
(X_1 \otimes_{G_1}\cdots \otimes_{G_{n-1}} X_n)^*
\otimes_{G_0} U_0\,.
\]
Then,
\begin{equation}
  \label{eq:4}
  \langle
  f_n \times \cdots \times f_1 |
  \lambda^{(n)} \circ (\phi_1\times \cdots \times \phi_n)
  \rangle =
  \prod_{t=1}^n \langle f_i | \phi_i\rangle\,.
\end{equation}

\section{Intertwiners }
\label{sec:subsp-mund-i}

Let $Q_0$, $S$, $G$ and $M$ be as in
Setting~\ref{sec:defin-notat-main-2}. This section establishes
properties on intertwiners relative to $M$ which are needed to prove
Theorem~\ref{sec:defin-main-results}. This includes the properties of
$\circledast$ mentioned in Section~\ref{sec:defin-notat-main} as well
as Proposition~\ref{sec:defin-notat-main-13} on the existence of the
pairing $(-|-)$. This is done in several steps,
\begin{itemize}
\item first, by proving that, in
  Definition~\ref{sec:defin-notat-main-1}, the ${\mathbbm{k}} G_{i_0}$-module
  $M(\underline i;V)$ is isomorphic to
  $M(\underline i)\otimes_{G_{i_n}} V$ for a suitable
  ${\mathbbm{k}} G_{i_0}-{\mathbbm{k}} G_{i_n}$-bimodule $M(\underline i)$;
\item next, by proving that, under this isomorphism, $\circledast$
  corresponds to the operation $\times$ of
  Definition~\ref{sec:pairing-morphisms-1};
\item and finally by defining $(-|-)$ so that, under this isomorphism,
  it corresponds to the pairing $\langle-|-\rangle$ of
  Definition~\ref{sec:pairing-morphisms}.
\end{itemize}
These steps are proceeded in turn in Sections
\ref{sec:spaces-munderline-i-4}, \ref{sec:comp-intertw-1}, and
\ref{sec:pair-intertw}, respectively. They use the bimodule $MG$
defined as follows. Note that the action of $G$ on $S$ yields the
skew group algebra $S*G$. Like in Section~\ref{sec:pair-betw-morph},
for all groups $H$, the pieces of notation $e$, $\otimes_H$ and
$\Hom_H$ stand for the neutral element of $H$, for $\otimes_{{\mathbbm{k}} H}$,
and for $\Hom_{{\mathbbm{k}} H}$, respectively.

\begin{defn}
  \label{sec:intertwiners-}
  Define $MG$ as the following $S*G-S*G$-bimodule.
  \begin{itemize}
  \item   Its underlying vector space is
    $M\otimes_{\mathbbm{k}}{\mathbbm k} G$, a tensor
  $m\otimes g$ is denoted by $m*g$ for all $m\in M$ and $g\in G$.
\item The actions of $S$ and $G$ on the left and on the right are such
  that, for all $m\in M$, $g,h,k\in G$ and $s,s'\in S$,
\[
\begin{array}{ccc}
  s\cdot (m*g)\cdot s' & = & sm\,^gs'*g \\
  h \cdot (m*g) \cdot k & = & \,^hm *hgk\,.
\end{array}
\]
\end{itemize}
\end{defn}

The $S*G-S*G$-bimodule $M^*G$ is defined similarly after replacing $M$ by
$M^*$.

\subsection{The space $M(\underline{i})$}
\label{sec:spaces-munderline-i-4}

Let $n$ be a positive integer. Let $\underline{i}=i_0,\ldots,i_n$ a
sequence in $Q_0$.  Denote by $M(\underline{i})$ the following
${\mathbbm{k}} G_{i_0}-{\mathbbm{k}} G_{i_n}$-bimodule,
\begin{equation}
  \label{eq:24}
M(\underline{i}) = e_{i_0}MGe_{i_1}\otimes_{G_{i_1}}\cdots
\otimes_{G_{i_{n-1}}} e_{i_{n-1}} MG e_{i_n}\,.
\end{equation}
Note that this yields the ${\mathbbm{k}} G_{i_n}-{\mathbbm{k}} G_{i_0}$-bimodule
$M^*(\underline{i}^o)$. These bimodules are dual to each other. Here
is an explicit isomorphism used later. Consider the isomorphism
(\ref{eq:15}) taking $X_t=e_{i_{t-1}}M^*G e_{i_t}$ for all
$t\in \{1,\ldots,n\}$,
\begin{equation}
  \label{eq:10}
  e_{i_n}(MG)^*e_{i_{n-1}}\otimes_{G_{i_{n-1}}} \cdots \otimes_{G_{i_1}}
  e_{i_1}(MG)^* e_{i_0} \xrightarrow{(\ref{eq:15})}
  M(\underline{i})^*\,.
\end{equation}
Since the following mapping is a non-degenerate pairing 
\[
\begin{array}{crcl}
  [-|-] \colon & MG \otimes_{\mathbbm{k}} M^*G & \to & {\mathbbm{k}} \\
               & (m * g) \otimes (\varphi * h)
                                 & \mapsto  & \varphi(\,^{g^{-1}}m) \delta_{e,hg}\,,
\end{array}
\]
it induces an isomorphism (of $S*G-S*G$-bimodules)
\begin{equation}
  \label{eq:21}
  \begin{array}{rcl}
    M^*G & \to & (MG)^* \\
    \bullet & \mapsto & [-|\bullet]
  \end{array}
\end{equation}
the $n$-th tensor power of which induces an isomorphism of ${\mathbbm{k}}
G_{i_n}-{\mathbbm{k}} G_{i_0}$-bimodules,
\begin{equation}
  \label{eq:5}
  M^*(\underline{i}^o) \xrightarrow{\sim}
  e_{i_n}(MG)^*e_{i_{n-1}}\otimes_{G_{i_{n-1}}} \cdots \otimes_{G_{i_1}}
  e_{i_1}(MG)^* e_{i_0}\,.
\end{equation}
Composing (\ref{eq:10}) and (\ref{eq:5}) yields an
isomorphism of ${\mathbbm{k}} G_{i_n}-{\mathbbm{k}} G_{i_0}$-bimodules
\begin{equation}
  \label{eq:6}
  \Lambda_{\underline{i}}\colon M^*(\underline{i}^o) \xrightarrow{\sim}
  M(\underline{i})^*\,,
\end{equation}
which is induced by the mapping from $(MG)^{\otimes_{\mathbbm{k}} n}$ to
$((MG)^{\otimes_{\mathbbm{k}} n})^*$ which assigns to any tensor
$(\varphi_n * g_n) \otimes \cdots \otimes (\varphi_1 * g_1)$ the
following linear form on $(MG)^{\otimes_{\mathbbm{k}} n}$,
\begin{equation}
  \label{eq:35}
  (m_1 * g_1') \otimes \cdots \otimes (m_n * g_n') \mapsto \sum_{k_1,\ldots,k_{n-1}}\prod_{t=1}^n\varphi_t(\,^{k_t^{-1}g_t'^{-1}}m_t)\delta_{e,g_tk_{t-1}^{-1}g_t'k_t}\,,
\end{equation}
where $\delta$ denotes the Kronecker symbol, $k_t$ runs through
$G_{i_t}$ and $k_n=k_0=e$.

The bimodules $M(\underline i)$ and $M^*(\underline i^o)$ are
hence objects of the monoidal category $(\mathrm{mod}(A^e),\otimes_A)$ in
which they are left dual to each other. The following lemma describes
tensor products of these objects in this monoidal category. The same
statements are valid after replacing $M$ by $M^*$.
\begin{lem}
  \label{sec:spaces-munderline-i-3}
  Let $m,n$ be positive integers. Let $\underline{i''} =
  i_0,\ldots,i_{m+n}$ be a sequence in $Q_0$. Denote
  the sequences  $i_0,\ldots, i_m$ and $i_m,\ldots,i_{m+n}$ by
  $\underline{i}$ and $\underline{i'}$, respectively. 
  \begin{enumerate}
  \item $M(\underline{i''}) = M(\underline{i})\otimes_{G_{i_m}}
    M(\underline{i'})$.
  \item The following diagram is commutative
    \[
    \xymatrix{
      M^*(\underline{i'}^o) \otimes_{G_{i_m}} M^*(\underline{i}^o)
      \ar[rr]^{\Lambda_{\underline{i'}} \otimes_{G_{i_m}}
        \Lambda_{\underline{i}}}
      \ar@{=}[d]
      &&
      M(\underline{i'})^* \otimes_{G_{i_m}} M(\underline{i})^*
      \ar[d]^{(\ref{eq:9})} \\
      M^*(\underline{i''}^o) \ar[rr]_{\Lambda_{\underline{i''}}}
      &&
      M(\underline{i''})^* \, .
    }
    \]
  \end{enumerate}
\end{lem}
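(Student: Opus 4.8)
The plan is to treat the two parts separately, with part (1) being essentially a definitional unwinding and part (2) being the genuine content. For part (1), I would simply expand $M(\underline{i''})$ using definition~(\ref{eq:24}): it is the $(m+n)$-fold tensor product $e_{i_0}MGe_{i_1}\otimes_{G_{i_1}}\cdots\otimes_{G_{i_{m+n-1}}}e_{i_{m+n-1}}MGe_{i_{m+n}}$ over the successive groups $G_{i_1},\ldots,G_{i_{m+n-1}}$. Since the associativity of $\otimes$ lets me insert parentheses freely, I would split this product at the factor $e_{i_m}$, grouping the first $m$ tensor factors into $M(\underline{i})$ and the last $n$ into $M(\underline{i'})$; the tensor joining the two blocks is precisely $\otimes_{G_{i_m}}$, which yields $M(\underline{i})\otimes_{G_{i_m}}M(\underline{i'})$. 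I expect this to be an equality on the nose, not merely a natural isomorphism, because both sides are the same iterated tensor product.

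For part (2) the goal is to show that the isomorphism $\Lambda_{\underline{i''}}$, which identifies $M^*(\underline{i''}^o)$ with $M(\underline{i''})^*$, is compatible with the ``dual of a tensor product'' mapping~(\ref{eq:9}) applied to the factorisation from part (1). The natural approach is to reduce everything to the explicit formula~(\ref{eq:35}) that describes $\Lambda_{\underline i}$ on tensors, together with the formula in~(\ref{eq:9}). I would trace a general tensor $(\varphi_{m+n}*g_{m+n})\otimes\cdots\otimes(\varphi_1*g_1)$ living in $M^*(\underline{i'}^o)\otimes_{G_{i_m}}M^*(\underline{i}^o)$ around both paths of the square and check that the two resulting linear forms on $M(\underline{i''})=M(\underline{i})\otimes_{G_{i_m}}M(\underline{i'})$ agree. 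Concretely, the right-down composite first applies $\Lambda_{\underline{i'}}\otimes_{G_{i_m}}\Lambda_{\underline{i}}$, producing a tensor of two linear forms, and then pairs them via~(\ref{eq:9}), introducing a single summation $\sum_{h\in G_{i_m}}$ over the middle group; the down-right composite applies the single isomorphism $\Lambda_{\underline{i''}}$, whose defining formula~(\ref{eq:35}) already contains summations $\sum_{k_1,\ldots,k_{m+n-1}}$ over all the intermediate groups. The heart of the matter is to see that the summation index $k_m$ appearing in $\Lambda_{\underline{i''}}$ matches exactly the summation index $h$ introduced by~(\ref{eq:9}), while the remaining indices $k_t$ for $t\neq m$ split cleanly between the two factors $\Lambda_{\underline{i}}$ and $\Lambda_{\underline{i'}}$.

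The main obstacle will be bookkeeping: the product of Kronecker deltas $\delta_{e,g_tk_{t-1}^{-1}g_t'k_t}$ in~(\ref{eq:35}) must be shown to factor correctly at the splitting index $t=m$, where the convention $k_0=k_{m+n}=e$ for the two smaller sequences has to be reconciled with the global convention $k_0=k_{m+n}=e$ and the free middle index $k_m$ for the larger one. I would handle this by writing out the two products $\prod_{t=1}^m$ and $\prod_{t=m+1}^{m+n}$ coming from $\Lambda_{\underline{i}}$ and $\Lambda_{\underline{i'}}$, observing that in each the boundary index that was forced to be $e$ is now the variable $h$ from~(\ref{eq:9}), and verifying that reindexing $h=k_m$ merges the two partial products into the single product $\prod_{t=1}^{m+n}$ of~(\ref{eq:35}). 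Since all the mappings are $\k$-linear and the tensors considered span the relevant spaces, checking the identity on such general tensors suffices. As the statement notes, the identical argument applies verbatim with $M$ replaced by $M^*$, so no separate treatment is needed.
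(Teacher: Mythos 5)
Your proposal is correct. Part (1) is handled exactly as in the paper, whose proof simply records that the claim follows from the definition (\ref{eq:24}) together with the associativity of the iterated tensor product, so the splitting at the factor $e_{i_m}$ is an equality on the nose as you expect. For part (2) your route differs in organisation from the paper's: the paper's one-sentence argument is compositional, using that $\Lambda_{\underline i}$ is by construction the composite of the tensor power (\ref{eq:5}) of the fixed isomorphism (\ref{eq:21}) with the instance (\ref{eq:10}) of (\ref{eq:15}), so that the square commutes because the tensor-power step splits trivially at $G_{i_m}$ and the iterated map (\ref{eq:15}) is, by the way it is built from (\ref{eq:9}), automatically compatible with any bracketing of the factors. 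You instead verify commutativity by chasing a spanning tensor through the closed formula (\ref{eq:35}) and matching the middle summation index $k_m$ with the index $h$ introduced by (\ref{eq:9}); this is a correct and self-contained computation (the Kronecker-delta bookkeeping at $t=m$ factors as you describe, with the boundary conventions $k_0=k_{m}=e$ of the two partial products being replaced by the free variable $h$), at the cost of redoing by hand the coherence that the compositional definition yields for free. Both arguments establish the same identity; yours makes the verification explicit, the paper's makes it structural.
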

\begin{proof}
  (1) follows from the definition of the spaces
  $M(\underline{\bullet})$ and (2) follows from the definition of the
  morphisms $\Lambda_{\underline{\bullet}}$ in terms of (\ref{eq:15})
  and (\ref{eq:21}).
\end{proof}

The following result is the link between the bimodules
$M(\underline \bullet)$ and the modules of
Definition~\ref{sec:defin-notat-main-1}.  This link makes it possible
to translate the results of Section~\ref{sec:pair-betw-morph} in terms
of intertwiners. From now on, the piece of notation $*$ used for
tensors in $MG$ is often dropped in order to avoid overloaded
notation.

\begin{lem}
  \label{sec:space-munderlinei}
  Let $\underline i = i_0,\ldots,i_n$ be a sequence in $Q_0$, where
  $n\geqslant 1$. Let $U_{n}\in \mathrm{mod}({\mathbbm{k}} G_{i_n})$. There is an isomorphism
  in $\mathrm{mod}({\mathbbm{k}} G_{i_0})$,
\begin{equation}
  \label{eq:subsp-mund-i-1}
  \Theta^M_{\underline{i};U_{n}} \colon M(\underline{i}; U_{n})
  \xrightarrow{\sim} M(\underline{i})
  \otimes_{G_{i_n}}U_{n}
\end{equation}
given by
\[
m_1\otimes \cdots \otimes m_n \otimes y_1\cdots y_n u \mapsto
m_1y_1 \otimes
\,^{y_1^{-1}}m_2y_2 \otimes \cdots \otimes
\,^{(y_1\cdots y_{n-1})^{-1}}m_n y_n \otimes u\,.
\]
When there is no risk of confusion, $\Theta^M_{\underline i;U_n}$ is
denoted by $\Theta_{\underline i; U_n}$.
\end{lem}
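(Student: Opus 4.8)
The plan is to prove the lemma summand by summand, by making the right $\k G_{i_t}$-module structure of each tensor factor of $M(\underline i)$ explicit, using it to decompose the target $M(\underline i)\otimes_{G_{i_n}}U_n$ as a direct sum indexed by the \emph{same} sequences $\underline y$ that index the defining decomposition~\eqref{eq:42} of $M(\underline i;U_n)$, and then checking that $\Theta_{\underline i;U_n}$ restricts to a $\k$-linear bijection between matching summands. The $\k G_{i_0}$-equivariance is then treated separately.

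The structural fact underlying everything is that, for each $t$, the right $\k G_{i_t}$-module $e_{i_{t-1}}MGe_{i_t}$ is free, a basis being the elements $m * y_t$ with $y_t$ running through $[G/G_{i_t}]$ and $m$ through a basis of $\,_{i_{t-1}}M_{y_t\cdot i_t}$; this is immediate from Definition~\ref{sec:intertwiners-}, since $m * g = (m * y_t)\cdot k$ whenever $g = y_t k$ with $k\in G_{i_t}$, while $m\in\,_{i_{t-1}}M_{g\cdot i_t} = \,_{i_{t-1}}M_{y_t\cdot i_t}$. Feeding this freeness into~\eqref{eq:24} one factor at a time, via $(X\otimes_\k \k G_{i_t})\otimes_{G_{i_t}}N \simeq X\otimes_\k N$, collapses the iterated tensor product and yields a $\k$-linear isomorphism $M(\underline i)\otimes_{G_{i_n}}U_n \simeq \bigoplus_{\underline y}\,_{i_0}M_{y_1\cdot i_1}\otimes_\k\cdots\otimes_\k\,_{i_{n-1}}M_{y_n\cdot i_n}\otimes_\k U_n$, in which the $\underline y$-summand consists of the normal-form tensors $(\tilde m_1 * y_1)\otimes\cdots\otimes(\tilde m_n * y_n)\otimes u$ with $\tilde m_t\in\,_{i_{t-1}}M_{y_t\cdot i_t}$ and $u\in U_n$.

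With this normal form in hand bijectivity is essentially automatic. The map $\Theta_{\underline i;U_n}$ is well defined on each summand $M_{\underline y}(\underline i;U_n)$ because it is $\k$-multilinear in the tensor factors, and by construction it carries that summand into the $\underline y$-summand above via the untwisting $m_t\mapsto\,^{(y_1\cdots y_{t-1})^{-1}}m_t = \tilde m_t$. Since each $\,^{(y_1\cdots y_{t-1})^{-1}}(-)$ is an invertible $\k$-linear map $\,_{y_1\cdots y_{t-1}\cdot i_{t-1}}M_{y_1\cdots y_t\cdot i_t}\to\,_{i_{t-1}}M_{y_t\cdot i_t}$, the restriction of $\Theta_{\underline i;U_n}$ to each $\underline y$-component is a bijection; assembling the componentwise inverses $\tilde m_t\mapsto\,^{y_1\cdots y_{t-1}}\tilde m_t$ produces a two-sided inverse.

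The step I expect to be the main obstacle is verifying that $\Theta_{\underline i;U_n}$ is $\k G_{i_0}$-linear, because the two actions are described very differently: on the source by the coset-shuffling rule of Definition~\ref{sec:defin-notat-main-1}(2), and on the target through the left action $g\cdot(m * y) = \,^gm * gy$ on the first tensor factor. I would compute $g\cdot T$ for a normal-form tensor $T$ by repeatedly renormalising: write $g y_1 = y_1' k_1$ with $y_1'\in[G/G_{i_1}]$ and $k_1\in G_{i_1}$, push $k_1$ across the first $\otimes_{G_{i_1}}$, then write $k_1 y_2 = y_2' k_2$, and so on, the process terminating with an element $h\in G_{i_n}$ acting on $u$. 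This reproduces exactly the data $\underline y'$ and $h$ of Definition~\ref{sec:defin-notat-main-1}, since the relations accumulate to $g y_1\cdots y_t = y_1'\cdots y_t' k_t$ (whence $g y_1\cdots y_t G_{i_t} = y_1'\cdots y_t' G_{i_t}$) and $g y_1\cdots y_n = y_1'\cdots y_n' h$. Comparing $\Theta_{\underline i;U_n}(\,^g x)$ with $g\cdot\Theta_{\underline i;U_n}(x)$ then reduces to the single identity $(y_1'\cdots y_{t-1}')^{-1}\,g\,y_1\cdots y_{t-1} = k_{t-1}$, which shows the untwisted $t$-th factors agree; the careful bookkeeping of these coset renormalisations is the only genuinely delicate point of the proof.
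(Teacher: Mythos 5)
Your proof is correct and complete; note that the paper states this lemma without any proof, treating it as a routine verification, so there is no ``paper approach'' to diverge from. Your argument --- establishing that each $e_{i_{t-1}}MGe_{i_t}$ is free as a right $\k G_{i_t}$-module with basis the $m*y_t$ ($y_t\in[G/G_{i_t}]$, $m$ in a basis of $_{i_{t-1}}M_{y_t\cdot i_t}$), collapsing $M(\underline i)\otimes_{G_{i_n}}U_n$ into normal-form summands indexed by the same sequences $\underline y$, and verifying equivariance by the coset renormalisations $gy_1\cdots y_t = y_1'\cdots y_t'k_t$ --- is exactly the verification the paper implicitly relies on, and the data $(\underline y',h)$ you recover is precisely that of Definition~\ref{sec:defin-notat-main-1}(2), so the key identity $(y_1'\cdots y_{t-1}')^{-1}gy_1\cdots y_{t-1}=k_{t-1}$ does close the equivariance check.
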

\begin{proof}
  This is straightforward to check.
\end{proof}

The following natural transformation of functors from
$\mathrm{mod}({\mathbbm{k}} G_{i_0})$ to $\mathrm{mod}({\mathbbm{k}})$, defined by
$\Theta_{\underline{i};U_{n}}$, is hence an isomorphism of functors,
\begin{equation}
  \label{eq:18}
  \begin{array}{ccc}
    \Hom_{G_{i_0}}(-,M(\underline{i};U_{n}))
    & \xrightarrow{\sim} &
                           \Hom_{G_{i_0}}(- ,M(\underline{i})\otimes_{G_{i_n}} U_{n}) \\
    -  & \longmapsto & \Theta_{\underline{i};U_{n}} \circ -\,.
  \end{array}
\end{equation}

\subsection{Compositions of intertwiners}
\label{sec:comp-intertw-1}

It is now possible to compare the operation $\circledast$ on
intertwiners, see Definition~\ref{sec:defin-notat-main-6}, and the
operation $\times$ of Definition~\ref{sec:pairing-morphisms-1}.

\begin{lem}
  \label{sec:comp-intertw}
  Let $\underline i'' = i_0,\ldots,i_{m+n}$ be a sequence in $Q_0$,
  where $m,n\geqslant 1$. Denote the sequences $i_0,\ldots,i_m$ and
  $i_m,i_{m+1},\ldots,i_{m+n}$ by $\underline i$ and $\underline i'$,
  respectively. Let $U_0\in \mathrm{mod}({\mathbbm{k}} G_{i_0})$,
  $U_m \in \mathrm{mod}({\mathbbm{k}} G_{i_m})$ and
  $U_{m+n}\in \mathrm{mod}({\mathbbm{k}} G_{i_{m+n}})$. Finally, let
  $f\in \Hom_{{\mathbbm{k}} G_{i_0}}(U_0,M(\underline i; U_m))$ and
  $f'\in \Hom_{{\mathbbm{k}} G_{i_m}}(U_m, M(\underline i';U_{m+n}))$.
  Then,
  \begin{equation}
    \label{eq:7}
  (\Theta_{\underline{i'};U_{m+n}} \circ f') \times
  (\Theta_{\underline{i};U_m} \circ f) =
  \Theta_{\underline{i''};U_{m+n}} \circ (f' \circledast f)\,.
\end{equation}
  As a consequence,
  $f'\circledast f \in \Hom_{G_{i_0}}( U_0 ,
  M(\underline{i''};U_{m+n}))$.
\end{lem}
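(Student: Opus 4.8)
The plan is to prove the identity (\ref{eq:7}) by evaluating both sides on an arbitrary element $u\in U_0$ and comparing them factor by factor in the tensor decomposition of $M(\underline{i''})\otimes_{G_{i_{m+n}}}U_{m+n}$; the final assertion will then follow formally. First I would set up the relevant instance of $\times$: taking $G=G_{i_0}$, $H=G_{i_m}$, $K=G_{i_{m+n}}$, $X=M(\underline i)$, $Y=M(\underline{i'})$, $V=U_m$, and $W=U_{m+n}$, the maps $f_1:=\Theta_{\underline i;U_m}\circ f$ and $f_2:=\Theta_{\underline{i'};U_{m+n}}\circ f'$ are morphisms of $\k G_{i_0}$- and $\k G_{i_m}$-modules respectively, since each $\Theta$ is an isomorphism in the appropriate module category by Lemma~\ref{sec:space-munderlinei}. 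Hence $f_2\times f_1$ is defined and, using $M(\underline{i''})=M(\underline i)\otimes_{G_{i_m}}M(\underline{i'})$ from Lemma~\ref{sec:spaces-munderline-i-3}(1), both sides of (\ref{eq:7}) are $\k$-linear maps $U_0\to M(\underline{i''})\otimes_{G_{i_{m+n}}}U_{m+n}$.

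Next I would compute the left-hand side. By definition of $\times$ one has $(f_2\times f_1)(u)=(M(\underline i)\otimes_{G_{i_m}}f_2)(f_1(u))$. Writing $f(u)$ as in Notation~\ref{sec:defin-notat-main-5} and applying the explicit formula for $\Theta_{\underline i;U_m}$, the factor carrying $M(\underline i)$ in $f_1(u)$ is $f^{(1)}_{\underline y}(u)y_1\otimes\,^{y_1^{-1}}f^{(2)}_{\underline y}(u)y_2\otimes\cdots\otimes\,^{(y_1\cdots y_{m-1})^{-1}}f^{(m)}_{\underline y}(u)y_m$, while $f_2$ is applied to $f^{(0)}_{\underline y}(u)\in U_m$. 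Expanding $f_2(f^{(0)}_{\underline y}(u))=\Theta_{\underline{i'};U_{m+n}}(f'(f^{(0)}_{\underline y}(u)))$ by the same recipe with $\underline{y'}=y_{m+1},\ldots,y_{m+n}$ produces, for each pair $(\underline y,\underline{y'})$, an explicit tensor whose last $n$ $M$-factors are $\,^{(y_{m+1}\cdots y_{m+t-1})^{-1}}f'^{(t)}_{\underline{y'}}(f^{(0)}_{\underline y}(u))\,y_{m+t}$ for $t=1,\ldots,n$, followed by $f'^{(0)}_{\underline{y'}}(f^{(0)}_{\underline y}(u))\in U_{m+n}$.

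I would then compute the right-hand side by applying $\Theta_{\underline{i''};U_{m+n}}$ to the explicit expression for $(f'\circledast f)(u)$ given in Definition~\ref{sec:defin-notat-main-6}. The first $m$ $M$-factors of $(f'\circledast f)_{\underline{y''}}(u)$ are $f^{(t)}_{\underline y}(u)$, so $\Theta_{\underline{i''};U_{m+n}}$ turns them into exactly the $M(\underline i)$-factors found on the left. For $t\in\{1,\ldots,n\}$, the $(m+t)$-th factor is $\,^{y_1\cdots y_m}(f'^{(t)}_{\underline{y'}}(f^{(0)}_{\underline y}(u)))$; applying the $\Theta$-recipe conjugates it by $(y_1\cdots y_{m+t-1})^{-1}$ and multiplies on the right by $y_{m+t}$. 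The key step is the telescoping identity $(y_1\cdots y_{m+t-1})^{-1}(y_1\cdots y_m)=(y_{m+1}\cdots y_{m+t-1})^{-1}$, which reduces the composite conjugation to $\,^{(y_{m+1}\cdots y_{m+t-1})^{-1}}$ and thus matches the corresponding tail factor of the left-hand side; the $U_{m+n}$-component is untouched on both sides. Since the sums over $\underline{y''}=(\underline y,\underline{y'})$ coincide term by term, (\ref{eq:7}) follows.

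Finally, for the consequence: the right-hand side of (\ref{eq:7}) equals $\Theta_{\underline{i''};U_{m+n}}\circ(f'\circledast f)$, and $\Theta_{\underline{i''};U_{m+n}}$ is an isomorphism in $\mod{\k G_{i_0}}$; as $f_2\times f_1$ is a composite of $\k G_{i_0}$-linear maps, it is $\k G_{i_0}$-linear, whence $f'\circledast f=\Theta_{\underline{i''};U_{m+n}}^{-1}\circ(f_2\times f_1)$ is a morphism of $\k G_{i_0}$-modules into $M(\underline{i''};U_{m+n})$, i.e.\ an intertwiner. The main obstacle is purely bookkeeping: keeping the index ranges, the nested conjugations, and the placement of the $y_t$'s consistent across the two $\Theta$-expansions so that the telescoping identity can be applied cleanly.
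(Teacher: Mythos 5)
Your proposal is correct and follows essentially the same route as the paper's proof: unwind both sides of \eqref{eq:7} on an element $u\in U_0$ via the explicit formula \eqref{eq:subsp-mund-i-1} for the maps $\Theta$, match the tensor factors using the telescoping identity $(y_1\cdots y_{m+t-1})^{-1}(y_1\cdots y_m)=(y_{m+1}\cdots y_{m+t-1})^{-1}$, and deduce the $\k G_{i_0}$-linearity of $f'\circledast f$ from the fact that $\Theta_{\underline{i''};U_{m+n}}$ is an isomorphism and $\times$ produces module morphisms. The only organisational difference is that you expand the two sides separately and meet in the middle, whereas the paper transforms the left-hand side step by step into the right-hand side.
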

\begin{proof}
  Let $u\in
  U_0$. Since $\Theta_{\underline{i'};U_{m+n}} \circ f'$ lies in
  $\Hom_{G_{i_m}}(U_m, M(\underline{i''}) \otimes_{G_{i_{m+n}}}
  U_{m+n})$ and $\Theta_{\underline{i};U_m} \circ f$ lies in $\Hom_{G_{i_0}}(
  U_0, M(\underline{i}) \otimes_{G_{i_m}} U_m)$, then
\[
  ((\Theta_{\underline{i'};U_{m+n}} \circ f') \times
  (\Theta_{\underline{i}; U_m} \circ f))(u)
  \underset{(\ref{eq:3})}{=} 
  (M(\underline{i}) \otimes_{{\mathbbm{k}} G_{i_m}} (\Theta_{\underline{i'};U_{m+n}}
  \circ f')) \circ (\Theta_{\underline{i}; U_m}\circ f)(u)\,.
\]
Using the definition of $\Theta_{\underline i;U_m}$, see
(\ref{eq:subsp-mund-i-1}), the right-hand side is equal to
\[
\begin{array}{l}
  (M(\underline{i}) \otimes_{{\mathbbm{k}} G_{i_m}} (\Theta_{{\underline{i}'}; U_{m+n}} \circ f'))
  (
  \sum_{\underline{y}}
  f_{\underline{y}}^{(1)}(u) y_1 \otimes
  \,^{y_1^{-1}} f_{\underline{y}}^{(2)}(u) y_2 \otimes
  \cdots  \\

  \cdots \otimes
  \,^{(y_1\cdots y_{m-1})^{-1}} (f_{\underline{y}}^{(m)}(u)) y_m \otimes
  f_{\underline{y}}^{(0)}(u) )\,,
\end{array}
\]
where $\underline y$ runs through all sequences $y_1,\ldots,y_m$ such
that $y_t \in [G/G_{i_t}]$ for all $t\in \{1,\ldots,m\}$. Using the
definition of $\Theta_{\underline{i}';U_{m+n}}$, see
(\ref{eq:subsp-mund-i-1}), this is equal to
\[
\begin{array}{l}
  \sum_{\underline{y},\underline{y'}}
  f_{\underline{y}}^{(1)}(u) y_1 \otimes \cdots \otimes
  \,^{(y_1\cdots y_{m-1})^{-1}} (f_{\underline{y}}^{(m)}(u)) y_m \otimes \\
  
  f_{\underline{y'}}'^{(1)}(f_{\underline{y}}^{(0)}(u)) y_{m+1} \otimes
  \,^{y_{m+1}^{-1}} (f_{\underline{y'}}'^{(2)}(f_{\underline{y}}^{(0)}(u))) y_{m+2} \otimes \cdots \\
  
  \cdots \otimes   \,^{(y_{m+1}\cdots y_{m+n})^{-1}}
  (f_{\underline{y'}}'^{(n)}(f_{\underline{y}}^{(0)}(u))) y_{m+n} \otimes
  f_{\underline{y'}}'^{(0)}(f_{\underline{y}}^{(0)}(u))\,,
\end{array}
\]
where $\underline y$ is as before and $\underline{y'}$ runs through
all sequences $y_{m+1},\ldots,y_{m+n}$ such that
$y_{m+t} \in [G/G_{i_{m+t}}]$ for all $t\in \{1,\ldots,n\}$.  Using
the definition of $\Theta_{\underline{i}'';U_{m+n}}$, see
(\ref{eq:subsp-mund-i-1}), this is equal to
\[
\begin{array}{l}
  \Theta_{\underline{i''};U_{m+n}}(
  \sum_{\underline{y''}} f_{\underline{y}}^{(1)}(u) \otimes \cdots
  \otimes
  f_{\underline{y}}^{(m)}(u) \otimes
  \, ^{y_1\cdots y_m} (f_{\underline{y'}}'^{(1)}(f_{\underline{y}}^{(0)}(u))) \otimes \cdots  \\

  \cdots \otimes
  \, ^{y_1\cdots y_m}(f_{\underline{y'}}'^{(n)}( f_{\underline{y}}^{(0)}(u))) \otimes
  y_1\cdots y_{m+n}
  f_{\underline{y'}}'^{(0)}(f_{\underline{y}}^{(0)}(u)))\,,
\end{array}
\]
where $\underline{y''}$ runs through all sequences
$y_1,\ldots,y_{m+n}$ such that $y_t\in [G/G_{i_t}]$ for all
$t\in \{1,\ldots,m+n\}$ and where $\underline{y}$ and $\underline{y'}$
stand for the sequences $y_1,\ldots,y_m$ and $y_{m+1},\ldots,y_{m+n}$,
respectively.  Finally, using the definition of $\circledast$, see
(\ref{eq:36}), this is equal to
$(\Theta_{\underline{i''};U_{m+n}}\circ (f' \circledast f))(u)$.
Whence (\ref{eq:7}). Note that $f'\circledast f$ is a morphism of
${\mathbbm{k}} G_{i_0}$-modules because
\begin{itemize}
\item $\Theta_{\underline{i''};U_{m+n}}$ is an isomorphism of ${\mathbbm{k}}
  G_{i_0}$-modules,
\item $f$ and $\Theta_{\underline{i};U_m}$ are morphisms of ${\mathbbm{k}}
  G_{i_0}$-modules,
\item $f'$ and $\Theta_{\underline{i'};U_{m+n}}$ are morphisms of ${\mathbbm{k}}
  G_{i_m}$-modules,
\item and the operation ``$\times$'' yields morphisms of
  ${\mathbbm{k}} G_{i_0}$-modules (see (\ref{eq:3})).
\end{itemize}
\end{proof}

Lemma~\ref{sec:comp-intertw} shows that
Definition~\ref{sec:defin-main-results-3} yields a, possibly
non-associative, algebra.
\begin{prop}
  \label{sec:comp-intertw-2}
  $\intw$ is an associative and unital algebra.
\end{prop}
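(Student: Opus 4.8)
The plan is to deduce both properties from the corresponding, and simpler, facts about the operation $\times$ of Definition~\ref{sec:pairing-morphisms-1} in the monoidal category $(\mod{A^e},\otimes_A)$, transported through the isomorphisms $\Theta_{\underline i;V}$ of Lemma~\ref{sec:space-munderlinei}. First I would record that $\circledast$ is bilinear and respects the decomposition of $\intw$ into homogeneous components indexed by the source vertex $(i_0,U)$ and the target vertex $(i_n,V)$: a product $f'\circledast f$ is nonzero only when the target vertex of $f$ coincides with the source vertex of $f'$, in which case it lies in the expected component. It therefore suffices to verify associativity and the unit axiom on homogeneous elements.

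For associativity when all factors have positive degree, I would first check that $\times$ is associative. Given $f_1\in\Hom_{G_0}(U_0,X_1\otimes_{G_1}U_1)$, $f_2\in\Hom_{G_1}(U_1,X_2\otimes_{G_2}U_2)$ and $f_3\in\Hom_{G_2}(U_2,X_3\otimes_{G_3}U_3)$, unfolding (\ref{eq:3}) gives
\[
(f_3\times f_2)\times f_1 = \bigl(X_1\otimes_{G_1}((X_2\otimes_{G_2}f_3)\circ f_2)\bigr)\circ f_1,
\]
while
\[
f_3\times (f_2\times f_1) = (X_1\otimes_{G_1}X_2\otimes_{G_2}f_3)\circ (X_1\otimes_{G_1}f_2)\circ f_1;
\]
these coincide by bifunctoriality of $\otimes_A$ applied to $X_1\otimes_{G_1}-$, together with the canonical associativity of the tensor product over $A$ (treated as an identification, as in Lemma~\ref{sec:spaces-munderline-i-3}(1)). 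Then, using Lemma~\ref{sec:spaces-munderline-i-3}(1) to identify $M(\underline{i''})$ with $M(\underline i)\otimes_{G_{i_m}}M(\underline{i'})$ and Lemma~\ref{sec:comp-intertw}, which asserts $\Theta\circ(f'\circledast f)=(\Theta\circ f')\times(\Theta\circ f)$, I transport this equality through the bijections (\ref{eq:18}). Applying the latter twice to a triple $f,f',f''$ yields $\Theta\bigl((f''\circledast f')\circledast f\bigr)=(\Theta(f'')\times\Theta(f'))\times\Theta(f)$ and $\Theta\bigl(f''\circledast(f'\circledast f)\bigr)=\Theta(f'')\times(\Theta(f')\times\Theta(f))$; since each $\Theta_{\underline i;V}$ is bijective, associativity of $\times$ forces that of $\circledast$ on all positive-degree triples.

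For the unit, I would take $\mathbf 1=\sum_{(i,U)}\id_U$, summed over the vertices $(i,U)$ of $Q_G$; each $\id_U$ is a degree-$0$ intertwiner because $M((i);U)=U$. As every $U$ is simple, $\End_{\k G_i}(U)=\k\,\id_U$ and $\Hom_{\k G_i}(U,V)=0$ for non-isomorphic simples, so the degree-$0$ part of $\intw$ is spanned by these orthogonal idempotents. It remains to establish $\id_V\circledast f=f$ and $f\circledast \id_U=f$ for every $f\in\Hom_{\k G_{i_0}}(U,M(\underline i;V))$ with $U$ at the source vertex and $V$ at the target vertex. On the monoidal side, $\id_U$ and $\id_V$ correspond under $\Theta$ to the identities of $U=\k G_{i_0}\otimes_{G_{i_0}}U$ and $V=\k G_{i_n}\otimes_{G_{i_n}}V$; since tensoring with the regular bimodule $\k G_i$ is the identity functor, $\times$ with such an identity returns the original morphism, and transporting back through $\Theta$ gives the two unit identities. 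Finally, associativity of any triple involving a degree-$0$ factor is immediate once that factor acts as a scalar multiple of an identity, so associativity on positive degrees extends to all of $\intw$.

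The step I expect to require the most care is not associativity itself, which reduces to the associativity of composition via bifunctoriality, but the bookkeeping at the degree-$0$ components: Definition~\ref{sec:defin-notat-main-6} is stated only for $m,n\geqslant1$, so one must pin down the product with the identities $\id_U$ to be the unique extension compatible with $\times$ (equivalently, with the identification $M((i);U)=U$) and then confirm that $\mathbf 1$ is genuinely a two-sided identity.
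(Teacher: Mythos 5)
Your proposal is correct and follows essentially the same route as the paper: the unit is the sum of the identity intertwiners $\id_U\in\Hom_{\k G_i}(U,M(i;U))$ over the vertices $(i,U)$, and associativity of $\circledast$ is obtained by transporting the associativity of $\times$ (a consequence of the associativity of the tensor product) through the bijections induced by $\Theta_{\underline i;V}$ via Lemma~\ref{sec:comp-intertw}. Your extra care about the degree-$0$ components (Definition~\ref{sec:defin-notat-main-6} being stated only for $m,n\geqslant 1$) addresses a point the paper leaves implicit, but does not change the argument.
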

\begin{proof}
  Note that $\intw$ is unital. Indeed, for all
  $i\in [G\backslash Q_0]$ and $U\in \irr(G_i)$, denote by ${\varepsilon}_{i,U}$
  the intertwiner $\id_U \in \Hom_{{\mathbbm{k}} G_i}(U, M(i;U))$, recall that
  $M(i;U)=U$. Then $\sum_{i,U}{\varepsilon}_{i,U}$ is a unity for $\circledast$.

  In the framework of Definition~\ref{sec:pairing-morphisms-1}, the
  equality $f_3\times(f_2\times f_1) = (f_3\times f_2)\times f_1$
  holds whenever both sides make sense because of the associativity of
  the tensor product. In view of Lemma~\ref{sec:comp-intertw}, this
  entails that $\circledast$ is associative.
\end{proof}

\subsection{The pairing on intertwiners}
\label{sec:pair-intertw}

Let $n$ be a positive integer, let $\underline{i}=i_0,\ldots,i_n$ be a
sequence in $Q_0$ and let $U_0$ and $U_n$ be left modules over
${\mathbbm{k}} G_{i_0}$ and ${\mathbbm{k}} G_{i_n}$, respectively.  To every
$\varphi \in \Hom_{G_{i_n}}(U_n,M^*(\underline{i}^o;U_0))$ is
associated a ${\mathbbm{k}}$-linear mapping
\[
\Phi(\varphi) \colon M(\underline{i}; U_n) \to U_0
\]
defined as follows. For every sequence of representatives $y_1,\ldots,y_n$
($y_t\in \left[G/G_{i_t}\right]$) there exist unique sequences
\[
\underline{x}=x_{n-1},\ldots,x_0\ (x_t \in \left[G/G_{i_t}\right])
\,,\ \text{and}\
h_{n-1},\ldots,h_0\ (h_t\in G_{i_t})
\]
such that, for all $t\in \{1,\ldots,n\}$,
\begin{equation}
  \label{eq:41}
(y_t\cdots y_n)^{-1}  = x_{n-1}\cdots x_{t-1} h_{t-1}\,;
\end{equation}
these sequences are determined by a decreasing induction:
$y_n^{-1} = x_{n-1}h_{n-1}$ and $h_ty_t^{-1} = x_{t-1}h_{t-1}$ for all
$t\in \{1,\ldots,n-1\}$; then, for every
$m_1\otimes \cdots \otimes m_t \otimes y_1\cdots y_n u$ lying in
$M(\underline{i};U_n)$, define the element
$\Phi(\varphi)(m_1\otimes \cdots \otimes m_t \otimes y_1\cdots y_n u)$
of $U_0$, using Notation~\ref{sec:defin-notat-main-12}, as
\begin{equation}
  \label{eq:30}
\prod_{t=1}^n \varphi^{\underline{x}}_{(t)}(u)\left(\,^{(y_1\cdots y_n)^{-1}}m_t
\right) \,^{h_0^{-1}} (\varphi^{\underline{x}}_{(0)}(u))\,.
\end{equation}
This does make sense, because
$m_t \in \,_{y_1\cdots y_{t-1}\cdot i_{t-1}}M_{y_1 \cdots y_t \cdot
  i_t}$,
because $\varphi^{\underline{x}}_{(t)}$ is a linear form on
$_{x_{n-1}\cdots x_{t-1}\cdot i_{t-1}}M_{x_{n-1}\cdots x_t\cdot i_t}$,
and because of (\ref{eq:41}).

After appropriate identifications, $\Phi$ is related to the adjunction
isomorphism $\Psi_{M(\underline{i})}^{\bullet, \bullet}$ of
section~\ref{sec:adjunctions} as explained in the following
result. Recall that $\Psi_{M(\underline{i})}^{U_n,U_0}$,
$\Theta^{M^*}_{\underline{i}^o;U_0}$ and $\Lambda_{\underline{i}}$ are defined
in (\ref{eq:16}), (\ref{eq:subsp-mund-i-1}), and (\ref{eq:6}), respectively.
\begin{lem}
  \label{sec:transl-intertw-1}
  Keep the previous setting. Denote by $\varphi'$ the following
  composite morphism of left ${\mathbbm{k}} G_{i_n}$-modules,
  \[
  U_n \xrightarrow{\varphi} M^*(\underline{i}^o;U_0)
  \xrightarrow{\Theta^{M^*}_{\underline{i}^o;U_0}}
  M^*(\underline{i}^o)\otimes_{G_{i_0}} U_0
  \xrightarrow{\Lambda_{\underline{i}} \otimes_{G_{i_0}} U_0}
  M(\underline{i})^* \otimes_{G_{i_0}} U_0\,.
  \]
  Then,
  \begin{equation}
    \label{eq:19}
    \Phi (\varphi)=\Psi_{M(\underline{i})}^{U_n,U_0} (\varphi')\circ
    \Theta^M_{\underline{i};U_n}\,.
  \end{equation}
  In particular,
  $\Phi(\varphi) \in \Hom_{G_{i_0}}(M(\underline{i};U_n), U_0)$.
\end{lem}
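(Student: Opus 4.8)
The plan is to prove the identity (\ref{eq:19}) by evaluating both sides on the spanning elements $\omega = m_1\otimes\cdots\otimes m_n\otimes y_1\cdots y_n u$ of $M(\underline i;U_n)$, where $y_t\in[G/G_{i_t}]$ and $u\in U_n$; since every map in sight is $\k$-linear, checking agreement on such $\omega$ suffices. By construction the left-hand side $\Phi(\varphi)(\omega)$ is exactly the expression (\ref{eq:30}), so the whole content of the lemma is to show that the right-hand side $\Psi_{M(\underline i)}^{U_n,U_0}(\varphi')\bigl(\Theta_{\underline i;U_n}(\omega)\bigr)$ reduces to the same expression. The ``in particular'' clause will then be immediate, since $\Psi_{M(\underline i)}^{U_n,U_0}(\varphi')$ lies in $\Hom_{G_{i_0}}$ by Lemma~\ref{sec:pair-betw-morph-1} and $\Theta_{\underline i;U_n}$ is $\k G_{i_0}$-linear, so their composite $\Phi(\varphi)$ is a morphism of $\k G_{i_0}$-modules.

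First I would apply $\Theta_{\underline i;U_n}$ using (\ref{eq:subsp-mund-i-1}), obtaining $\Theta_{\underline i;U_n}(\omega)=\xi\otimes u$ with $\xi=m_1y_1\otimes\,^{y_1^{-1}}m_2y_2\otimes\cdots\otimes\,^{(y_1\cdots y_{n-1})^{-1}}m_n y_n\in M(\underline i)$. Next I would make the factor $\varphi'(u)\in M(\underline i)^*\otimes_{G_{i_0}}U_0$ explicit. Starting from the decomposition of $\varphi(u)$ supplied by Notation~\ref{sec:defin-notat-main-12}, I apply $\Theta^{M^*}_{\underline i^o;U_0}$ (again via (\ref{eq:subsp-mund-i-1}), now for $M^*$ and the reversed sequence $\underline i^o$) and then $\Lambda_{\underline i}\otimes_{G_{i_0}}U_0$, whose effect on each homogeneous component is dictated by the pairing formula (\ref{eq:35}). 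This writes $\varphi'(u)$ as a sum over sequences $\underline x=x_{n-1},\ldots,x_0$ of terms $\psi_{\underline x}\otimes w_{\underline x}$, where $\psi_{\underline x}\in M(\underline i)^*$ is the linear form built from the components $\varphi^{\underline x}_{(1)}(u),\ldots,\varphi^{\underline x}_{(n)}(u)$ and $w_{\underline x}$ is a twist of $\varphi^{\underline x}_{(0)}(u)\in U_0$.

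Then I would substitute into the adjunction formula (\ref{eq:16}) for $\Psi_{M(\underline i)}^{U_n,U_0}$, producing a triple sum: over $g\in G_{i_0}$ coming from $\Psi$, over the internal group elements $k_1,\ldots,k_{n-1}$ coming from (\ref{eq:35}), and over $\underline x$. The crux is to evaluate $\psi_{\underline x}(g^{-1}\cdot\xi)$ and to verify that the Kronecker symbols of (\ref{eq:35}), together with the requirement that $\psi_{\underline x}$ pair nontrivially with $\xi$ only when their vertex–coset data match, force all but one $\underline x$ to contribute zero. The surviving $\underline x$ is characterised precisely by the conditions (\ref{eq:41}), and the accumulated group element collapses the sums over $g$ and the $k_t$ to the single value $g=h_0^{-1}$, producing the twist $\,^{h_0^{-1}}(\varphi^{\underline x}_{(0)}(u))$ and the product $\prod_{t=1}^n\varphi^{\underline x}_{(t)}(u)\bigl(\,^{(y_1\cdots y_n)^{-1}}m_t\bigr)$. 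Matching this with (\ref{eq:30}) establishes (\ref{eq:19}).

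The hard part will be exactly this final bookkeeping: keeping straight the left and right $G$-actions on $MG$ and $M^*G$, the inverses, and the chosen coset representatives, so that the deltas of (\ref{eq:35}) reproduce the inductive relations $y_n^{-1}=x_{n-1}h_{n-1}$ and $h_ty_t^{-1}=x_{t-1}h_{t-1}$ encoded in (\ref{eq:41}), and so that a single well-defined $h_0\in G_{i_0}$ emerges to give the correct twist of $\varphi^{\underline x}_{(0)}(u)$. Everything else is a routine composition of the explicit formulas (\ref{eq:subsp-mund-i-1}), (\ref{eq:35}), and (\ref{eq:16}).
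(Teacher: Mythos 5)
Your plan is correct and follows essentially the same route as the paper's proof: compute $\varphi'$ explicitly via \eqref{eq:subsp-mund-i-1} and \eqref{eq:35}, feed it into the adjunction formula \eqref{eq:16}, evaluate on $\Theta_{\underline i;U_n}(\omega)$, and observe that the Kronecker deltas together with the vertex--coset constraints kill all terms except the one indexed by the $\underline x$ and $g=h_0^{-1}$ determined by \eqref{eq:41}, recovering \eqref{eq:30}. The paper carries out exactly this bookkeeping (its intermediate steps \eqref{eq:38}, \eqref{eq:39}, and \eqref{eq:14} are the sums you describe), so the only thing separating your proposal from the written proof is the explicit execution of the computation you correctly identify as the crux.
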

\begin{proof}
  The composition $\Theta^{M^*}_{\underline{i}^o;U_0}\circ \varphi$ is given
  by (see (\ref{eq:subsp-mund-i-1}))
  \[
  u \mapsto
  \sum_{\underline{x}} \varphi^{\underline{x}}_{(n)}(u) x_{n-1} \otimes
  \,^{x_{n-1}^{-1}} \varphi^{\underline{x}}_{(n-1)}(u) x_{n-2} \otimes
  \cdots \otimes \,^{(x_{n-1}\cdots x_1)^{-1}} \varphi^{\underline{x}}_{(1)}(u) x_0 \otimes \varphi^{\underline{x}}_{(0)}(u)\,,
  \]
  where $\underline{x} = x_{n-1},\ldots,x_0$ runs through all sequences
  of representatives ($x_t\in [G/G_{i_t}]$ for all
  $0 \leqslant t \leqslant n-1$). Hence, following (\ref{eq:35}), the
  morphism $\varphi'$ is given by
  \[
  u \mapsto \sum_{\underline{x}} \phi^{\underline{x}}(u) \otimes
  \varphi^{\underline{x}}_{(0)}(u)\,
  \]
  where $\phi^{\underline{x}}(u)$ is the linear form on
  $M(\underline{i})$, which is a quotient of $(MG)^{\otimes_{\mathbbm{k}} n}$,
  induced by the linear form on $(MG)^{\otimes_{\mathbbm{k}} n}$ mapping any
  $(m_1 * g_1) \otimes \cdots \otimes (m_n * g_n)$ to
  \begin{equation}
    \label{eq:38}
  \sum_{k_1,\ldots,k_{n-1}} \prod_{t=1}^n
  \left(
    \,^{(x_{n-1}x_{n-2}\cdots x_t)^{-1}} (\varphi^{\underline{x}}_{(t)}(u))
  \right)
  \left(
    \,^{k_t^{-1}g_t^{-1}} m_t
    \right)
    \cdot
    \delta_{e,x_{t-1}k_{t-1}^{-1}g_tk_t}\,,
  \end{equation}
  where $\delta$ denotes the Kronecker symbol, $k_0=k_n=e$ and $k_t$
  runs through $G_{i_t}$ for $1\leqslant t \leqslant n-1$.

  In order to simplify the expression of (\ref{eq:38}), consider
  a tensor $(m_1 * g_1) \otimes \cdots \otimes (m_n * g_n)$ in
  $(MG)^{\otimes_{\mathbbm{k}} n}$; for all $k_1,\ldots,k_{n-1}$ in $G$, the
  following assertions are equivalent, keeping $k_0=k_n=e$,
  \begin{enumerate}[(i)]
  \item $(\forall t\in \{1,\ldots,n-1\})\ e = x_{t-1}k_{t-1}^{-1} g_t
    k_t$,
  \item $k_t = (x_{t-1}\cdots x_1 x_0 g_1 g_2\cdots g_t)^{-1}$ for all
    $t\in \{1,\ldots,n\}$, and $g_1\cdots g_n$ is the inverse of
    $x_{n-1}\cdots x_1 x_0$;
  \end{enumerate}
  given that each
  $^{(x_{n-1}x_{n-2}\cdots x_t)^{-1}}( \varphi^{\underline{x}}_{(t)}(u))$
  in (\ref{eq:38}) is a linear form on $_{x_{t-1}i_{t-1}}M_{i_t}$ and
  each $k_t$ is constrained to lie in $G_{i_t}$, then (\ref{eq:38}) is
  equal to
  \[
  \prod_{t=1}^n (\varphi^{\underline{x}}_{(t)}(u)) \left(
    \,^{x_{n-1}\cdots x_1x_0g_1g_2\cdots g_{t-1}} m_t
  \right)
  \delta_{e,g_1g_2\cdots g_n x_{n-1}\cdots x_1x_0}\,,
  \]
  which simplifies to
  \begin{equation}
    \label{eq:39}
    \prod_{t=1}^n (\varphi^{\underline{x}}_{(t)}(u))
    \left(
      \,^{(g_tg_{t+1}\cdots g_n)^{-1}} m_t
    \right)
    \delta_{(g_1g_2\cdots g_n)^{-1},x_{n-1}\cdots x_1x_0}\,.
  \end{equation}

  Thus, the morphism $\varphi'$ is given by
  $u \mapsto \sum_{\underline{x}}\phi^{\underline{x}}(u) \otimes
  \varphi^{\underline{x}}_{(0)}(u)$,
  where $\phi^{\underline{x}}(u)$ is the linear form on
  $M(\underline{i})$ induced by the mapping assigning (\ref{eq:39}) to
  any element $(m_1 * g_1) \otimes \cdots \otimes (m_n * g_n)$ of
  $(MG)^{\otimes_{\mathbbm{k}} n}$.

  Hence (see Lemma~\ref{sec:adjunctions}),
  $\Psi_{M(\underline{i})}^{U_n,U_0}(\varphi')$ is the morphism from
  $M(\underline{i})\otimes_{G_{i_n}} U_n$ to $U_0$ induced by the
  mapping from $(MG)^{\otimes_{\mathbbm{k}} n}\otimes_{\mathbbm{k}} U_n$ to $U_0$ which maps any
  $(m_1*g_1) \otimes \cdots \otimes (m_n * g_n) \otimes u$ to
  \[
  \begin{array}{ll}
    &
      \sum_{\underline{x}}\sum_{g\in G_{i_0}}
      \phi^{\underline{x}}(u)(g^{-1}m_1g_1\otimes m_2g_2\otimes \cdots
      \otimes m_n g_n)\ ^g(\varphi^{\underline{x}}_{(0)}(u)) \\
    =&
       \sum_{g\,,\, \underline{x}}
       \prod_{t=1}^n\varphi^{\underline{x}}_{(t)}(u)(\,^{(g_t\cdots
       g_n)^{-1}}m_t)
       \delta_{(g_1\cdots g_n)^{-1}g,x_{n-1}\cdots
       x_0}\ ^g(\varphi^{\underline{x}}_{(0)}(u))\,.
  \end{array}
  \]

  Now, let
  $m_1 \otimes \cdots \otimes m_n \otimes y_1\cdots y_n u\in
  M(\underline{i};U_n)$.
  Recall that its image under $\Theta^M_{\underline{i}; U_n}$ is
  $m_1 y_1 \otimes \,^{y_1^{-1}}m_2y_2 \otimes \cdots \otimes
  \,^{(y_1\cdots y_{n-1})^{-1}} m_n y_n \otimes u$
  (see (\ref{eq:subsp-mund-i-1})). Hence, its image under
  $\Psi_{M(\underline{i})}^{U_n,U_0}(\varphi') \circ \Theta^M_{\underline{i};U_n}$ is
  \begin{equation}
    \label{eq:14}
    \sum_{g,\underline{x}} \prod_{t=1}^n \varphi^{\underline{x}}_{(t)}(u)
    (\,^{(y_1\cdots y_n)^{-1}}m_t)
    \delta_{(y_1 \cdots y_n)^{-1}g,x_{n-1}\cdots
      x_0}\ ^g(\varphi^{\underline{x}}_{(0)}(u))\,.
  \end{equation}
  Note that, given $g\in G_{i_0}$ and $\underline{x}$, then, for all
  $t\in \{1,\ldots,n\}$,
  \begin{itemize}
  \item $\varphi^{\underline{x}}_{(t)}(u) \in (\,_{x_{n-1}\cdots
      x_{t-1}\cdot i_{t-1}} M _{x_{n-1}\cdots x_t\cdot i_t})^*$ and
  \item $^{(y_1\cdots y_n)^{-1}}m_t \in \,_{(y_t\cdots y_n)^{-1}\cdot
      i_{t-1}} M _{(y_{t+1}\cdots y_n)^{-1}\cdot i_t}$;
  \end{itemize}
  in particular, if for a given index $(g,\underline{x})$ in
  (\ref{eq:14}) the product ``$\prod_{t=1}^n$'' is non-zero then this
  product is the unique non-zero term of the sum (\ref{eq:14}). More
  precisely, let $\{x_t,h_t\}_{0\leqslant t \leqslant n-1}$ be the
  unique collection of elements in $G$ such that, for all
  $t\in \{0,\ldots,n-1\}$,
  \begin{itemize}
  \item $x_t\in \left[G/G_{i_t}\right]$,
  \item $h_t\in G_{i_t}$ and
  \item $(y_{t+1}\cdots y_n)^{-1} = x_{n-1}\cdots x_t h_t$;
  \end{itemize}
  then, the only possibly non-zero term of the sum (\ref{eq:14}) is the
  one with index $(h_0^{-1},\underline{x})$. Therefore, (\ref{eq:14})
  equals
  \[
  \prod_{t=1}^n \varphi^{\underline{x}}_{(t)}(u)(\, ^{(y_1\cdots
    y_n)^{-1}}m_t)\ ^{h_0^{-1}}(\varphi^{\underline{x}}_{(0)}(u))\,,
  \]
  which is equal to
  $\Phi(\varphi)(m_1\otimes \cdots \otimes m_n\otimes y_1\cdots
  y_nu)$ (see (\ref{eq:30})).
  Thus,
  $\Phi(\varphi) = \Psi_{M(\underline{i})}^{U_n,U_0}(\varphi') \circ
  \Theta^M_{\underline{i};U_n}$.
  This proves (\ref{eq:19}).  The second assertion of the lemma is a
  direct consequence of (\ref{eq:19}).
\end{proof}

It follows from Lemma~\ref{sec:transl-intertw-1} that the following
mapping is bijective,
\[
\Phi \colon \Hom_{G_{i_n}}(U_n, M^*(\underline{i}^o;U_0)) \to
\Hom_{G_{i_0}}(M(\underline{i}; U_n), U_0)\,.
\]
This yields a non-degenerate pairing defined as follows.

\begin{defn}
  \label{sec:pair-intertw-1}
  Let $\underline i=i_0,\ldots,i_n$ be a sequence in $Q_0$, where
  $n\geqslant 1$. Let $U_0\in \mathrm{mod}({\mathbbm{k}} G_{i_0})$ be simple. Let
  $U_n\in \mathrm{mod}({\mathbbm{k}} G_{i_n})$. Define $(-|-)$ to be the non-degenerate
  pairing
\[
( -| - ) \colon \Hom_{G_{i_0}}(U_0, M(\underline{i}; U_n)) \otimes_{\mathbbm{k}}
\Hom_{G_{i_n}}(U_n, M^*(\underline{i}^o; U_0)) \to {\mathbbm{k}} \,,
\]
such that, for all $f\in \Hom_{G_{i_0}}(U_0, M(\underline{i}; U_n))$
and $\varphi \in \Hom_{G_{i_n}}(U_n, M^*(\underline{i}^o,U_0))$,
\begin{equation}
  \label{eq:13}
  ( f|\varphi ) \cdot \id_{U_0} = \Phi(\varphi) \circ
  f\,;
\end{equation}
\end{defn}

With this definition, it follows from (\ref{eq:34}) and (\ref{eq:19})
that
\begin{equation}
  \label{eq:20}
  (f | \varphi ) = \langle \Theta^M_{\underline{i}; U_n} \circ f |
  (\Lambda_{\underline{i}}\otimes_{G_{i_0}} U_0) \circ
  \Theta^{M^*}_{\underline{i}^o; U_0}
  \circ \varphi\rangle\,.
\end{equation}

It is now possible to prove Proposition~\ref{sec:defin-notat-main-13}.
\begin{proof}[Proof of Proposition~\ref{sec:defin-notat-main-13}]
  Note that~(\ref{eq:29}) is equal to $(\Phi(\varphi))(f(u))$, see
  (\ref{eq:30}). Hence, the pairing $(-|-)$ claimed by the proposition
  is the one of Definition~\ref{sec:pair-intertw-1}.
\end{proof}

The pairing $(-|-)$ is compatible with the composition of (dual)
intertwiners in the following sense.
\begin{lem}
  \label{sec:pair-intertw-2}
  Let $m,n$ be positive integers, let
  $\underline{i''}=i_0,\ldots,i_{m+n}$ be a sequence in
  $Q_0$, let $U_0$ and $U_m$ be simple left modules over ${\mathbbm{k}} G_{i_0}$
  and ${\mathbbm{k}} G_{i_m}$, respectively, and let $U_{m+n}$ be a left
  ${\mathbbm{k}} G_{i_{m+n}}$-module.  Then, for all intertwiners
  \[
  \begin{array}{ll}
    f_1\colon U_0 \to M(\underline{i};U_m)\,,& \varphi_1 \colon U_m \to
                                              M^*(\underline{i}^o;U_0)\,,
    \\
    f_2\colon U_m \to M(\underline{i'};U_{m+n})\,,& \varphi_2 \colon U_{m+n} \to
                                                    M^*(\underline{i'}^o;U_m)\,,
  \end{array}
  \]
  where $\underline{i}=i_0,\ldots,i_m$ and $\underline{i'}=i_m,\ldots,i_{m+n}$,
  the following holds
  \begin{equation}
    \label{eq:22}
  (f_2 \circledast f_1 | \varphi_1 \circledast \varphi_2 ) = ( f_2 |\varphi_2
  ) \cdot ( f_1 | \varphi_1 )\,.
\end{equation}
\end{lem}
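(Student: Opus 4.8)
The plan is to transport everything to the monoidal category $(\mod{A^e},\otimes_A)$ and to deduce \eqref{eq:22} from the compatibility of $\langle-|-\rangle$ with $\times$ established in Lemma~\ref{sec:pair-betw-morph-4}. Write $\underline i = i_0,\ldots,i_m$ and $\underline i' = i_m,\ldots,i_{m+n}$, and recall from Lemma~\ref{sec:spaces-munderline-i-3}(1) that $M(\underline{i''}) = M(\underline i)\otimes_{G_{i_m}} M(\underline i')$. Using \eqref{eq:subsp-mund-i-1} and its $M^*$-analogue, I would set
\[
\hat f_1 = \Theta_{\underline i;U_m}\circ f_1\,,\qquad
\hat f_2 = \Theta_{\underline i';U_{m+n}}\circ f_2\,,
\]
\[
\phi_1 = (\Lambda_{\underline i}\otimes_{G_{i_0}} U_0)\circ \Theta^{M^*}_{\underline i^o;U_0}\circ \varphi_1\,,\qquad
\phi_2 = (\Lambda_{\underline i'}\otimes_{G_{i_m}} U_m)\circ \Theta^{M^*}_{\underline i'^o;U_m}\circ \varphi_2\,.
\]
With $G = G_{i_0}$, $H = G_{i_m}$, $K = G_{i_{m+n}}$, and the modules $U_0$, $U_m$ (which are simple by hypothesis), $U_{m+n}$, these data satisfy the hypotheses of Lemma~\ref{sec:pair-betw-morph-4}, and \eqref{eq:20} gives $(f_1|\varphi_1) = \langle \hat f_1|\phi_1\rangle$ and $(f_2|\varphi_2) = \langle \hat f_2|\phi_2\rangle$. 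Hence the right-hand side of \eqref{eq:22} equals $\langle \hat f_2|\phi_2\rangle\cdot\langle \hat f_1|\phi_1\rangle$.

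Next I would expand the left-hand side of \eqref{eq:22} through \eqref{eq:20}, applied to $f_2\circledast f_1$ and $\varphi_1\circledast\varphi_2$. For the first argument, Lemma~\ref{sec:comp-intertw}, that is \eqref{eq:7}, gives $\Theta_{\underline{i''};U_{m+n}}\circ(f_2\circledast f_1) = \hat f_2\times \hat f_1$. For the second argument, the $M^*$-analogue of \eqref{eq:7} gives $\Theta^{M^*}_{\underline{i''}^o;U_0}\circ(\varphi_1\circledast\varphi_2) = \hat\varphi_1\times\hat\varphi_2$, where $\hat\varphi_j = \Theta^{M^*}\circ\varphi_j$ are the morphisms underlying $\phi_1$ and $\phi_2$. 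Consequently
\[
(f_2\circledast f_1 \,|\, \varphi_1\circledast\varphi_2) = \langle\, \hat f_2\times \hat f_1 \,\big|\, (\Lambda_{\underline{i''}}\otimes_{G_{i_0}} U_0)\circ(\hat\varphi_1\times\hat\varphi_2)\,\rangle\,.
\]
It then remains to identify $(\Lambda_{\underline{i''}}\otimes_{G_{i_0}} U_0)\circ(\hat\varphi_1\times\hat\varphi_2)$ with $\lambda\circ(\phi_1\times\phi_2)$, where $\lambda$ is the isomorphism of Lemma~\ref{sec:pair-betw-morph-4} obtained by applying $-\otimes_{G_{i_0}} U_0$ to \eqref{eq:9}.

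I expect this last identification to be the main obstacle, and I would settle it by combining Lemma~\ref{sec:spaces-munderline-i-3}(2) with the bifunctoriality of $\otimes_A$. By part (2), $\Lambda_{\underline{i''}}$ factors as the composite of $\Lambda_{\underline i'}\otimes_{G_{i_m}}\Lambda_{\underline i}$ with \eqref{eq:9}, so tensoring by $U_0$ factors $\Lambda_{\underline{i''}}\otimes_{G_{i_0}} U_0$ as $\lambda\circ((\Lambda_{\underline i'}\otimes_{G_{i_m}}\Lambda_{\underline i})\otimes_{G_{i_0}} U_0)$. Writing $\hat\varphi_1\times\hat\varphi_2 = (M^*(\underline{i'}^o)\otimes_{G_{i_m}}\hat\varphi_1)\circ\hat\varphi_2$ and pushing $\Lambda_{\underline i'}\otimes\Lambda_{\underline i}\otimes U_0$ through the tensor factors, bifunctoriality rewrites $((\Lambda_{\underline i'}\otimes_{G_{i_m}}\Lambda_{\underline i})\otimes_{G_{i_0}} U_0)\circ(\hat\varphi_1\times\hat\varphi_2)$ as $(\Lambda_{\underline i'}\otimes_{G_{i_m}}\phi_1)\circ\hat\varphi_2$; unfolding the definition $\phi_2 = (\Lambda_{\underline i'}\otimes_{G_{i_m}} U_m)\circ\hat\varphi_2$ shows this equals $(M(\underline i')^*\otimes_{G_{i_m}}\phi_1)\circ\phi_2 = \phi_1\times\phi_2$. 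Therefore $(\Lambda_{\underline{i''}}\otimes_{G_{i_0}} U_0)\circ(\hat\varphi_1\times\hat\varphi_2) = \lambda\circ(\phi_1\times\phi_2)$, and the displayed left-hand side becomes $\langle \hat f_2\times \hat f_1\,|\,\lambda\circ(\phi_1\times\phi_2)\rangle$. By Lemma~\ref{sec:pair-betw-morph-4} this equals $\langle \hat f_2|\phi_2\rangle\cdot\langle \hat f_1|\phi_1\rangle$, which is the right-hand side computed above, proving \eqref{eq:22}.
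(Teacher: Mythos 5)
Your proposal is correct and follows essentially the same route as the paper's proof: transport via $\Theta$ and $\Lambda$, identify $\circledast$ with $\times$ using Lemma~\ref{sec:comp-intertw} and its $M^*$-analogue, reduce the remaining identification to Lemma~\ref{sec:spaces-munderline-i-3}(2) together with bifunctoriality of the tensor product, and conclude with Lemma~\ref{sec:pair-betw-morph-4} and \eqref{eq:20}. The paper's argument differs only in presentation (it writes out the two composite morphisms explicitly rather than invoking bifunctoriality by name).
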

\begin{proof}
  In order to avoid overloaded expressions, the following shorthand
  notation is used in this proof: $\Theta_{\underline i;U_0}$ for
  $\Theta^M_{\underline i; U_0}$ and $\Theta'_{\underline{i};U_0}$ for
  $\Theta^{M^*}_{\underline{i}^o; U_0}$. Define
  $f_1',f_2',\varphi_1',\varphi_2'$ as follows
  \[
  \begin{array}{ll}
    f_1' = \Theta_{\underline{i};U_m} \circ f_1\,,
    &
      \varphi_1' = (\Lambda_{\underline{i}} \otimes_{G_{i_0}} U_0) \circ \Theta'_{\underline{i};U_0} \circ \varphi_1\,, \\
    f_2' = \Theta_{\underline{i'};U_{m+n}} \circ f_2\,,
    &
      \varphi_2' = (\Lambda_{\underline{i'}} \otimes_{G_{i_m}} U_m) \circ
      \Theta'_{\underline{i'};U_m} \circ \varphi_2\,.
  \end{array}
  \]
  % In particular
  % \[
  % \left\{
  %   \begin{array}{rcl}
  %     \Phi(\varphi_1) & = & \Psi(\varphi_1') \circ \Theta_{\underline
  %                           i;U_m} \\
  %     \Phi(\varphi_2) & = & \Psi(\varphi_2') \circ
  %                           \Theta_{\underline{i'};U_{m+1}}\,.
  %   \end{array}
  % \right.
  % \]
  % Consider the following composite morphism equal to
  % $(f_2\circledast f_1|\varphi_1\circledast\varphi_2)\cdot
  % \Theta_{\underline{i''};U_{m+n}}$,
  % \[
  % \Theta_{\underline{i''};U_{m+n}}\circ (f_2 \circledast f_1) \circ
  % \Phi(\varphi_1 \circledast \varphi_2)\,.
  %   \]
  % It follows from Lemma~\ref{sec:spaces-munderline-i-3},
  % Lemma~\ref{sec:comp-intertw} and Lemma~\ref{sec:transl-intertw-1},
  % that this morphism equals the following one
  % \[
  % (f_2' \times f_1') \circ \Psi(
  % \lambda \circ
  % (\varphi_1' \times \varphi_2')) \circ
  % \Theta_{\underline{i''};U_{m+n}}\,,
  % \]
  % where
  % $\lambda \colon M(\underline{i'})^* \otimes_{G_{i_m}} M(\underline
  % i)^* \otimes_{G_{i_0}} U_0 \to M(\underline{i''})^*
  % \otimes_{G_{i_0}} U_0$ is induced by (\ref{eq:9}). And it follows
  % from Lemma~\ref{sec:pair-betw-morph-4} that this morphism equals
  % $\langle f_2'|\varphi_2'\rangle \cdot \langle f_1'
  % |\varphi_1'\rangle\cdot  \Theta_{\underline{i''};U_{m+n}}$. Now,
  % $(f_1|\varphi_1) = \langle f_1' | \varphi_1'\rangle$ and
  % $(f_2|\varphi_2) = \langle f_2' | \varphi_2'\rangle$ (see (\ref{eq:20})). Since
  % $\Theta_{\underline{i''};U_{m+n}}$ is an isomorphism, it follows
  % that $(f_2\circledast f_1|\varphi_1\circledast\varphi_2) =
  % (f_1|\varphi_1)\cdot (f_2|\varphi_2)$.
  It follows from (\ref{eq:20}) that
  $(f_2 \circledast f_1 | \varphi_1 \circledast \varphi_2)$ is equal
  to
  \[
  \langle \Theta_{\underline{i''},U_{m+n}} \circ (f_2 \circledast f_1) |
  (\Lambda_{\underline{i''}} \otimes_{G_{i_0}} U_0) \circ
  \Theta'_{\underline{i''},U_0} \circ (\varphi_1 \circledast \varphi_2)\rangle\,;
  \]
  applying Lemma \ref{sec:comp-intertw} twice, first for $M$ and next
  for $M^*$, yields that the previous expression is equal to
  \[
  \langle
  \underset{f_1'}{\underbrace{(\Theta_{\underline{i'},U_{m+n}} \circ f_2)}} \times
  \underset{f_2'}{\underbrace{(\Theta_{\underline{i},U_m} \circ f_1)}}
  |
  (\Lambda_{\underline{i''}}\otimes_{G_{i_0}} U_0) \circ
  \left[
    (\Theta'_{\underline{i},U_0} \circ \varphi_1) \times
    (\Theta'_{\underline{i'},U_m} \circ \varphi_2)
    \right]
  \rangle\,;
  \]
  according to Lemma~\ref{sec:spaces-munderline-i-3} this is equal to
  \begin{equation}
    \label{eq:40}
  \langle
  f_1' \times f_2'
  |
  \lambda \circ
  (\Lambda_{\underline{i'}} \otimes_{G_{i_m}} \Lambda_{\underline{i}}
  \otimes_{G_{i_0}} U_0) \circ
  \left[
    (\Theta'_{\underline{i},U_0} \circ \varphi_1) \times
    (\Theta'_{\underline{i'},U_m} \circ \varphi_2)
    \right]
  \rangle\,,
\end{equation}
where $\lambda$ is as in Lemma~\ref{sec:pair-betw-morph-4}.  Now,
since the following composite morphisms are equal due to the
associativity of the tensor product
\[
\def\objectstyle{\scriptscriptstyle}
\def\labelstyle{\scriptscriptstyle}
\xymatrix{
  M^*(\underline{i'}^o) \underset{{G_{i_m}}}\otimes U_m
  \ar[rr]^{
    1 \otimes (\Theta'_{\underline{i}; U_0} \circ \varphi_1)
  }
  &&
  M^*(\underline{i'}^o) \underset{G_{i_m}}\otimes M^*(\underline{i}^o)
  \underset{G_{i_0}}\otimes U_0
  \ar[rr]^{
    \Lambda_{\underline{i'}}\otimes \Lambda_{\underline{i}} \otimes 1
  }
  &&
  M(\underline{i'})^* \underset{G_{i_m}}\otimes M(\underline{i})^*
  \underset{G_{i_0}}\otimes U_0
  }
\]
and 
\[
\def\objectstyle{\scriptscriptstyle}
\def\labelstyle{\scriptscriptstyle}
\xymatrix{
  M^*(\underline{i'}^o) \underset{G_{i_m}}\otimes U_m
  \ar[r]^{
    \Lambda_{\underline{i'}} \otimes 1}
&
M(\underline{i'})^* \underset{G_{i_m}}\otimes U_m
\ar[r]^{
  1 \otimes
  (\Theta'_{\underline{i};U_0} \circ \varphi_1)
  }
&
M(\underline{i'})^* \underset{G_{i_m}}\otimes M^*(\underline{i}^o)
\underset{G_{i_0}}\otimes U_0
\ar[r]^{
  1 \otimes \Lambda_{\underline{i}} \otimes 1
}
&
M(\underline{i'})^* \underset{G_{i_m}}\otimes M(\underline{i})^*
\underset{G_{i_0}}\otimes U_0\,,
}
\]
then 
$(\Lambda_{\underline{i'}} \otimes_{G_{i_m}} \Lambda_{\underline{i}}
  \otimes_{G_{i_0}} U_0) \circ
  \left[
    (\Theta'_{\underline{i},U_0} \circ \varphi_1) \times
    (\Theta'_{\underline{i'},U_m} \circ \varphi_2)
  \right]$ is equal to
  \[
  [
    \underset{\varphi_1'}{
      \underbrace{
    (\Lambda_{\underline{i}} \otimes_{G_{i_0}} U_0) \circ
    \Theta'_{\underline{i};U_0} \circ \varphi_1
    }}
  ]
  \times
  [
    \underset{\varphi_2'}{
      \underbrace{
    (\Lambda_{\underline{i'}} \otimes_{G_{i_m}} U_m) \circ
    \Theta'_{\underline{i'};U_m} \circ \varphi_1
    }}
].
\]
Therefore,
\[
\begin{array}{rcl}
  (\ref{eq:40})
  &
    =
  &
    \langle f_2' \times f_1' | \lambda \circ
    (\varphi_1' \times \varphi_2')\rangle \\
  &
    \underset{\text{Lemma \ref{sec:pair-betw-morph-4}}}=
  &
   \langle f_2'|\varphi_2'\rangle \cdot \langle f_1'|\varphi_1'\rangle
  \\
  &
    \underset{(\ref{eq:20})}=
  &
    (f_2|\varphi_2)\cdot (f_1|\varphi_1)\,.
\end{array}
\]
Thus, $(f_2\circledast f_1|\varphi_1\circledast\varphi_2) =
 (f_1|\varphi_1)\cdot (f_2|\varphi_2)$.
\end{proof}

\section{Proof of the main results}
\label{sec:proof-main-results}

This section proves Theorems~\ref{sec:defin-main-results} and
\ref{sec:defin-main-results-4}. Keeping $Q_0$, $S$, $G$ and $M$ as in
Setting~\ref{sec:defin-notat-main-2} and $Q_G$ as in
Setting~\ref{sec:defin-notat-main-10}, the proofs use the following
notation:
\begin{itemize}
\item denote by $\hat e$ the idempotent $\sum_{i\in [G\backslash Q_0]}
  e_i$ of $S$, this idempotent is not to be confused with $\tilde e$
  which is equal to $\sum_{(i,U)}e_i*{\varepsilon}_U$, where $(i,U)$ runs through
  the vertices of $Q_G$;
\item denote by $S_1$ the ${\mathbbm{k}}$-algebra
  $\Pi_{i\in [G\backslash Q_0]} {\mathbbm{k}} G_i$.
\end{itemize}

First, it is necessary to check that the quiver $Q_G$ corresponds to
the one given by Demonet in \cite{MR2578593}.
\begin{proof}[Proof of Theorem~\ref{sec:proof-main-results-1}]
  Following \cite{MR2578593}, see p.~1057, the algebra $T_S(M)*G$ is
  Morita equivalent to $\tilde e \cdot(T_S(M)*G)\cdot \tilde e$, which
  is isomorphic to the path algebra of a quiver having the same
  vertices as $Q_G$ and such that the arrows from a vertex $(i,U)$ to
  a vertex $(j,V)$ form a basis of the vector space
  $(e_i*{\varepsilon}_U)\cdot (MG) \cdot (e_j*{\varepsilon}_V)$. Now, considering $e_i$,
  $e_j$, ${\varepsilon}_U$, and ${\varepsilon}_V$ as elements of $S*G$,
  \[
  \begin{array}{rcl}
    (e_i*{\varepsilon}_U)\cdot
    (MG) \cdot (e_j*{\varepsilon}_V) & = & {\varepsilon}_U\cdot(e_i\cdot (MG)\cdot e_j)\cdot
                                {\varepsilon}_V \\
                          & \underset{~(\ref{eq:24})}{=} & {\varepsilon}_U \cdot M(i,j) \cdot {\varepsilon}_V \\
                          & \simeq & \Hom_{{\mathbbm{k}} G_i}(U,M(i,j)\otimes_{G_j}V) \\
                          &
                            \underset{~(\ref{eq:subsp-mund-i-1})}{\simeq}
                              & \Hom_{{\mathbbm{k}} G_i}(U, M(i,j;V))\,.
  \end{array}
  \]
  Whence the first statement of the theorem. Now, if
  $U={\mathbbm{k}} G_i\cdot {\varepsilon}_U$ for all $(i,U)\in Q_G$, then $f({\varepsilon}_U)$ does
  belong to $(e_i*{\varepsilon}_U) \cdot (MG) \cdot (e_j*{\varepsilon}_V)$, which is
  contained in $\tilde e\cdot (T_S(M)*G)\cdot \tilde e$, for all
  arrows $f\colon (i,U)\to (j,V)$ of $Q_G$. Whence the
  isomorphism~(\ref{eq:27}).
\end{proof}

The proof of Theorem~\ref{sec:defin-main-results} uses Morita
equivalences in the graded sense. Two ($\mathbb N$-)graded finite
dimensional ${\mathbbm{k}}$-algebras $\Lambda_1$ and $\Lambda_2$ are Morita
equivalent in the graded sense if
$\Lambda_2\simeq \End_{\Lambda_1}(P)^{\mathrm{op}}$ for some graded
projective $\Lambda_1$-module $P$ which is a generator of the category
of graded $\Lambda_1$-modules, and hence of $\mathrm{mod}(\Lambda_1)$. Both
$\Lambda_1$ and $\Lambda_2$ are isomorphic as graded algebras to basic
graded finite dimensional algebras; moreover $\Lambda_1$ and
$\Lambda_2$ are Morita equivalent in the graded sense if and only if
their basic versions are isomorphic as graded algebras.

\begin{proof}[Proof of Theorem~\ref{sec:defin-main-results}]
  Denote by $\Phi$ the algebra homomorphism ${\mathbbm{k}} Q_G\to \intw^{\mathrm{op}}$ given in
  the statement of the theorem. Hence, $\Phi(\gamma)=f_\gamma$
  for all paths $\gamma$ in $Q_G$.

  (1) Let $\gamma$ and $\gamma'$ be parallel paths in $Q_G$. It
  follows from the definition of $\varphi_{\gamma'}$, see
  Notation~\ref{sec:defin-main-results-1}, and from (\ref{eq:22}) that
  \begin{equation}
    \label{eq:8}
    (f_\gamma|\varphi_{\gamma'}) =
    \left\{
    \begin{array}{ll}
      1 & \text{if $\gamma=\gamma'$,} \\
      0 & \text{otherwise.}
    \end{array}\right.
\end{equation}
Since $\Phi$ maps non parallel paths in $Q_G$ into distinct
components of $\intw$ in the decomposition (\ref{eq:26}), then $\Phi$
is injective.  In order to prove that $\Phi$ is an isomorphism, it is
hence sufficient to prove that, for all integers $n$, the homogeneous
components of degree $n$ of the graded algebras ${\mathbbm{k}} Q_G$ and $\intw$
have the same dimensions. Note that ${\mathbbm{k}} Q_G$ is graded by the length of
the paths in $Q_G$ and $\intw$ is graded so that every intertwiner
lying in a space of the shape $\Hom_{{\mathbbm{k}}
  G_{i_0}}(U,M(i_0,\ldots,i_n;V))$ is homogeneous of degree $n$.

First, following \cite{MR2578593}, see
Theorem~\ref{sec:proof-main-results-1},  the algebras ${\mathbbm{k}} Q_G$ and
$\tilde e\cdot (T_S(M)*G)\cdot \tilde e$ are isomorphic ${\mathbbm{k}}$-algebras
both Morita equivalent to $T_S(M)*G$. Actually, the proof given in
\cite{MR2578593} works in the graded setting.

Next, $\hat e$ may be considered
  as an idempotent of $T_S(M)*G$. In this sense,
  \[
  \tilde e\cdot (T_S(M)*G)\cdot \tilde e = \tilde e \cdot \hat e
  (T_S(M)*G) \cdot \hat e \cdot \tilde e\,.
  \]
  Following \cite{MR2578593}, see p.~1057, the algebra
  $\hat e \cdot (T_S(M)*G)\cdot \hat e$ is Morita equivalent to
  $T_{S_1}(\hat e \cdot(MG) \cdot \hat e)$. Again, this is still true
  in the graded sense. Hence,
  $\tilde e \cdot (T_S(M)*G)\cdot \tilde e$, which is basic, is
  isomorphic to $\tilde e \cdot T_{S_1}(\hat e \cdot (MG)\cdot \hat
  e)\cdot \tilde e$ as a graded algebra.

  Finally, by definition of $\tilde e$ and $\hat e$, the homogeneous
  component of degree $n$ of
  $\tilde e \cdot T_{S_1}(\hat e \cdot (MG) \cdot \hat e) \cdot \tilde
  e$ is equal to
  \[
  \bigoplus_{i_0,i_n\in [G\backslash Q_0],\,U_0\in \irr(G_{i_0}),\,U_n
    \in \irr(G_{i_n})}
  (e_{i_0}*{\varepsilon}_{U_0}) \cdot (MG)^{\otimes_{S_1}n} \cdot (e_{i_n}*{\varepsilon}_{U_n})\,.
  \]
  This is equal to
  \[
  \bigoplus_{i_0,\ldots,i_n\in [G\backslash Q_0],\,U_0\in \irr(G_{i_0}),\,U_n
    \in \irr(G_{i_n})}
  {\varepsilon}_{U_0}\cdot M(i_0,\ldots,i_n)\cdot {\varepsilon}_{U_n}\,.
  \]
  This is isomorphic to
  \[
  \bigoplus_{i_0,\ldots,i_n,U_0,U_n} \Hom_{{\mathbbm{k}} G_{i_0}}(U_0,
  \underset{\simeq
    M(i_0,\ldots,i_n;V)}{\underbrace{M(i_0,\ldots,i_n)\otimes_{G_{i_n}}
      V}})\,.
  \]

  These considerations prove that the homogeneous components of degree
  $n$ of ${\mathbbm{k}} Q_G$ and $\intw$ are isomorphic. Thus, $\Phi$ is an
  isomorphism.

  (2) follows from (\ref{eq:8}) and from the fact that $\Phi$ is
  surjective.
\end{proof}

\begin{proof}[Proof of Theorem~\ref{sec:defin-main-results-4}]
  First, $\Xi$ is bijective. Indeed, on one hand, there is an equality
  of vector subspaces of $T_S(M)*G$,
  \[
  \hat e \cdot (T_S(M) *G )\cdot \hat e
  =
  \bigoplus_{\underline i,\underline y}
  \left(
  \,_{i_0}M_{y_1\cdot i_1} \otimes_{{\mathbbm{k}}} \cdots \otimes_{{\mathbbm{k}}}
  \,_{y_1\cdots y_{n-1}\cdot i_{n-1}} M _{y_1\cdots y_{n}\cdot i_{n}}
  \right)
  * y_1\cdots y_n{\mathbbm{k}} G\,,
  \]
  where $\underline i$ runs through all sequences $i_0,\ldots,i_n$ of
  $[G\backslash Q_0]$ and $\underline y$ runs through all sequences
  $y_1,\ldots,y_n$ such that $y_t\in [G/G_{i_t}]$ for all
  $t\in \{1,\ldots,n\}$. On the other hand, for all vertices $(i,U)$
  and $(j,V)$ of $Q_G$ and for all ${\mathbbm{k}} G_i-{\mathbbm{k}} G_j$-bimodules $N$, the
  following mapping is bijective,
  \[
  \begin{array}{ccc}
    \Hom_{{\mathbbm{k}} G_i}(U,N\cdot {\varepsilon}_V) & \longrightarrow & {\varepsilon}_U \cdot N \cdot
                                                 {\varepsilon}_V \\
    f & \mapsto & f({\varepsilon}_U)\,.
  \end{array}
  \]
  Hence, $\Xi$ is bijective and $\Xi^{-1}$ is given by
  $\Xi^{-1}(f) = f({\varepsilon}_U)$ for all $f\in \intw$ with domain denoted by
  $U$.

  Next, $\Xi$ is a morphism of ${\mathbbm{k}}$-algebras. Indeed, for all
  $f\in \Hom_{{\mathbbm{k}} G_{i_0}}(U,M(\underline i;V))$ lying in $\intw$, then
  $\Xi^{-1}(f) = f({\varepsilon}_U)$.  Therefore, with the setting of
  Definition~\ref{sec:defin-notat-main-6}, then
  $(f'\circledast f) ({\varepsilon}_U)$ is equal to the product
  $f({\varepsilon}_V)\cdot f'({\varepsilon}_U)$ in $T_S(M)*G$, see (\ref{eq:36}). Accordingly,
  $\Xi^{-1}$, and hence $\Xi$, is a ${\mathbbm{k}}$-algebra homomorphism.

  Finally, the diagram~(\ref{eq:45}) is commutative. Indeed, all its
  arrows are ${\mathbbm{k}}$-algebra homomorphisms. Hence, it suffices to prove
  the commutativity on the arrows of $Q_G$, which follows from the
  definitions of $\Xi$, (\ref{eq:27}), and (\ref{eq:32}).  Note that
  (\ref{eq:44}) follows from (\ref{eq:28}).
\end{proof}

\section{Acknowledgements}

I thank Edson Ribeiro Alvares for helpful comments on previous
versions of this article. I also thank the referee for several
suggestions that improved the presentation of the article.

\bibliographystyle{abbrv}
\bibliography{biblio.bib}

\end{document}